\documentclass[11pt]{article}

\usepackage[margin=1in]{geometry}
\usepackage{setspace}
\usepackage{amsmath,amsfonts,amssymb,mathtools}

\usepackage{amsthm}

\usepackage{times}
\usepackage[varg]{txfonts}
\usepackage{bm}

\usepackage{algorithm}
\usepackage[noend]{algorithmic}

\usepackage{graphicx}
\usepackage{tikz}
\usetikzlibrary{arrows.meta,positioning,shapes.geometric,shapes.misc,fit,calc}
\usetikzlibrary{patterns}

\usepackage{booktabs}
\usepackage{multirow}
\usepackage{array}

\usepackage{enumitem}
\usepackage{xcolor}
\usepackage[font=small,labelfont=bf]{caption}
\usepackage{subcaption}
\usepackage{etoolbox}

\usepackage[hidelinks]{hyperref}
\usepackage{natbib}
\allowdisplaybreaks

\theoremstyle{plain}
\newtheorem{theorem}{Theorem}
\newtheorem{proposition}{Proposition}

\newtheorem{corollary}{Corollary}
\theoremstyle{definition}
\newtheorem{definition}{Definition}
\newtheorem{example}{Example}

\theoremstyle{remark}
\newtheorem{remark}{Remark}

\newcommand{\Halmos}{\hfill\ensuremath{\square}}
\newenvironment{proof}[1]
  {\par\medskip\noindent\textit{#1}\ \ignorespaces}
  {\par\medskip}

\newcommand{\SingleSpacedXI}{\singlespacing}

\newcommand{\OneAndAHalfSpacedXII}{\onehalfspacing}

\newcommand{\RUNAUTHOR}[1]{}
\newcommand{\RUNTITLE}[1]{}
\newcommand{\MANUSCRIPTNO}[1]{}

\newcommand{\ECSwitch}{%
  \clearpage
  \setcounter{section}{0}%
  \setcounter{table}{0}%
  \setcounter{figure}{0}%
  \setcounter{equation}{0}%
  \setcounter{theorem}{0}%
  \setcounter{proposition}{0}%
  \setcounter{lemma}{0}%
  \setcounter{corollary}{0}%
  \setcounter{definition}{0}%
  \setcounter{example}{0}%
  \setcounter{assumption}{0}%
  \setcounter{remark}{0}%
  \renewcommand{\thesection}{EC.\arabic{section}}%
  \renewcommand{\thesubsection}{EC.\arabic{section}.\arabic{subsection}}%
  \renewcommand{\thesubsubsection}{EC.\arabic{section}.\arabic{subsection}.\arabic{subsubsection}}%
  \renewcommand{\thetable}{EC.\arabic{table}}%
  \renewcommand{\thefigure}{EC.\arabic{figure}}%
  \renewcommand{\theequation}{EC.\arabic{equation}}%
  \renewcommand{\thetheorem}{EC.\arabic{theorem}}%
  \renewcommand{\theproposition}{EC.\arabic{proposition}}%
  \renewcommand{\thelemma}{EC.\arabic{lemma}}%
  \renewcommand{\thecorollary}{EC.\arabic{corollary}}%
  \renewcommand{\thedefinition}{EC.\arabic{definition}}%
  \renewcommand{\theexample}{EC.\arabic{example}}%
  \renewcommand{\theassumption}{EC.\arabic{assumption}}%
  \renewcommand{\theremark}{EC.\arabic{remark}}%
  \renewcommand{\theHsection}{EC.\arabic{section}}%
  \renewcommand{\theHsubsection}{EC.\arabic{section}.\arabic{subsection}}%
  \renewcommand{\theHsubsubsection}{EC.\arabic{section}.\arabic{subsection}.\arabic{subsubsection}}%
  \renewcommand{\theHparagraph}{EC.\arabic{section}.\arabic{subsection}.\arabic{subsubsection}.\arabic{paragraph}}%
  \renewcommand{\theHtable}{EC.\arabic{table}}%
  \renewcommand{\theHfigure}{EC.\arabic{figure}}%
  \renewcommand{\theHequation}{EC.\arabic{equation}}%
  \renewcommand{\theHtheorem}{EC.\arabic{theorem}}%
  \renewcommand{\theHproposition}{EC.\arabic{proposition}}%
  \renewcommand{\theHlemma}{EC.\arabic{lemma}}%
  \renewcommand{\theHcorollary}{EC.\arabic{corollary}}%
  \renewcommand{\theHdefinition}{EC.\arabic{definition}}%
  \renewcommand{\theHexample}{EC.\arabic{example}}%
  \renewcommand{\theHassumption}{EC.\arabic{assumption}}%
  \renewcommand{\theHremark}{EC.\arabic{remark}}%
}
\newcommand{\ECHead}[1]{%
  \begin{center}
    {\large\bfseries #1}
  \end{center}
  \medskip
}

\makeatletter
\newcommand{\blockhead}[1]{%
  \ifvmode\else\\\@empty\fi
  \noalign{%
    \penalty\postdisplaypenalty\vskip 6pt
    \vbox{\normalbaselines
      \ifdim\linewidth=\columnwidth\else
        \parshape\@ne \@totalleftmargin \linewidth
      \fi
      \noindent\ignorespaces\quad\emph{#1}\par}%
    \penalty\predisplaypenalty\vskip 2pt\relax
  }%
}
\makeatother

\usepackage[disable]{todonotes}
\newcommand{\EB}[1]{}
\newcommand{\RH}[1]{}
\newcommand{\HW}[1]{}

\begin{document}

\title{\Large\bfseries Cooperative Integer Programming Games:\\
Core Stability and Optimal Coalition Structures}

\author{
Hyunwoo Lee\thanks{Grado Department of Industrial and Systems Engineering, Virginia Tech. Email: \texttt{hyunwoolee@vt.edu}} \and
Robert Hildebrand\thanks{Grado Department of Industrial and Systems Engineering, Virginia Tech. Email: \texttt{rhil@vt.edu}} \and
{\.I.} Esra B{\"u}y{\"u}ktahtak{\i}n\thanks{Grado Department of Industrial and Systems Engineering, Virginia Tech. Email: \texttt{esratoy@vt.edu}}
}

\date{\today}

\maketitle

\begin{abstract}
We introduce \emph{cooperative integer programming games} (CIPGs), in which
agents pool budget constraints to accomplish indivisible tasks jointly and the
characteristic function maps every coalition to the optimal value of a pooled
integer program. Our goal is to identify an optimal coalition structure (OCS)
and a stable one (OSCS). We derive a stability inequality that keeps each formed
coalition in the Core with respect to itself, and present two mixed-integer OCS
formulations, aggregated and disaggregated, proving that the disaggregated
formulation is integer-equivalent yet yields a tighter LP relaxation.
Building on the stability inequality we develop lifted stability cuts, several
separation strategies inside a cutting-plane algorithm, an SCS-feasible primal
heuristic that constructs warm starts with guaranteed stability, and a
payoff-refinement step computing the Shapley value and the nucleolus of every
formed coalition. On benchmark cooperative knapsack games, the method
certifies optimality with up to 16 players and reaches MIP gaps below 1\% at
30 players while evaluating 766 of the roughly $10^9$ coalition values.
\end{abstract}

\noindent\textbf{Keywords:} Integer programming games, Cooperative game theory, Optimal coalition structure

\bigskip

\section{Introduction} \label{s:intro}

In many multi-agent systems, each player faces an individual resource-allocation
problem naturally modeled as an integer program, say a knapsack or a
capacity-constrained selection problem. Acting alone, a player may be unable to
activate certain indivisible items or projects because their budget is
insufficient; by pooling budgets, a coalition can satisfy these integer
requirements jointly and share the resulting value continuously. Municipalities
funding indivisible infrastructure projects are a canonical example, and similar
settings arise in joint procurement, collaborative R\&D, and shared logistics.
In each, what a group achieves depends on who joins it and not on what the
players outside it do. That assumption is what makes a characteristic function
the right object.

A \emph{cooperative game} in characteristic function form is a pair $(N,V)$
consisting of a finite player set $N$ and a function
$V : 2^N \to \mathbb{R}$ with $V(\emptyset) = 0$, where $V(c)$ is the value a
coalition $c$ can secure on its own \citep{peleg2007introduction}. The classical
questions are which coalitions form and how they divide what they earn, and the
central stability notion is the \emph{Core} of $c$, the set of payoff vectors
\begin{equation}
\Bigl\{\, v \in \mathbb{R}^{|c|} \;:\;
   \sum_{i \in c} v_i = V(c), \quad
   \sum_{i \in s} v_i \ge V(s) \ \ \forall\, s \subseteq c \,\Bigr\},
\label{eq:core_def}
\end{equation}
that is, payoffs that are \emph{efficient} and \emph{stable} against
subcoalition deviation.

The classical theory takes $V$ as given. We introduce a variant in which it must
be computed. A \emph{cooperative integer programming game} (CIPG) is a
cooperative game whose values are optimal values of integer programs, so each
player's feasible plans live in a mixed-integer set and evaluating even one
$V(c)$ can require solving an NP-hard problem. A second difficulty is
independent of the first: $V$ has $2^{|N|}-1$ entries, and one stability
requirement refers to many at once.

Writing down $V(c)$ is itself a modeling decision. The program a player solves
alone is given, but there is no canonical way to extend it to a coalition.
Members' decisions may interact, their constraints may combine, and
collaborating may open up plans that no member could have chosen alone. Each of
these has to be settled and written into the constraints, and every answer
produces a different $V$. We show how much rides on the choice: superadditivity
and nonemptiness of the Core hold or fail with it, and so does the question of
whether the grand coalition is simply optimal.

We propose a resource allocation setting in which those choices are made
explicit. Certain resources combine when players collaborate, certain decisions
are then taken jointly by the coalition, and the rest stay local to each player.
Computing in this regime is our focus.

A \emph{coalition structure} (CS) over $c$ is a partition of $c$. Following
\citet[Def.~3.30]{koczy2018partition} we work with payoff-configurations, pairs
$(v, CS(N))$ efficient within each block and individually rational, and add Core
stability on each formed coalition \citep{aumann1974cooperative}:
$CS(N) = \{c_1,\dots,c_k\}$ with payoff vector $v$ is an SCS if, for each
$c_\ell$, the restriction of $v$ to $c_\ell$ lies in the Core of the game
restricted to $c_\ell$. This is weaker than the coalition structure core
\citep[Def.~3.31]{koczy2018partition}, which requires
$\sum_{i \in s} v_i \ge V(s)$ for \emph{every} $s \subseteq N$;
Section~\ref{app:complexity} shows that the weaker notion is what makes our
question well posed. Because
\eqref{eq:core_def} is enforced \emph{within} each coalition, an SCS guards
against fission: no subcoalition has an incentive to break away. It does not
guard against fusion, so separate coalitions may still benefit from merging
\citep{yang2025weak}. We call any SCS
that maximizes total payoff $\sum_{i \in N} v_i$ an OSCS, and our algorithms
optimize over SCS-feasible solutions directly.

We investigate three questions: which structural properties of $V$ can be
established in CIPGs; how to model and compute an \emph{optimal coalition
structure} (OCS); and how to characterize SCSs and \emph{optimize} over them to
obtain an OSCS.

\subsection{Literature Review} \label{s:literature}

Coalition-structure generation asks for a partition of $N$ maximizing social
welfare when the values $V(c)$ are already available, listed explicitly or
produced by a simple formula
\citep{sandholm1999coalition,rahwan2009anytime,rahwan2015coalition}. A parallel
literature equips games with coalition structures with their own stability
notions
\citep{aumann1974cooperative,koczy2018partition,apt2006stable,apt2009generic};
we adopt the coalition-structure stability of that line in
Section~\ref{ss:stability_CSS}.

Once the values are not given, the question becomes where they come from. The
closest classical answer is the \emph{linear production game} of
\citet{owen1975core}, in which players pool resource endowments and each
coalition is worth the optimal value of a linear program;
\citet{granot1986generalized} extends this to a generalized linear production
model that unifies a broad class of cooperative games. In that setting the Core
of the grand coalition is never empty: LP duality reads a Core allocation
directly off the dual prices on the pooled resources, so the grand coalition is
stable and optimal by construction.

CIPGs are the integer counterpart, and that guarantee fails: no dual
construction survives integrality. Optimal dual prices certify nothing once coalition-level decisions are integer,
so the duality argument for Core nonemptiness has no analogue; evaluating a
single $V(c)$ becomes NP-hard; and our experiments contain a full-pooling
instance whose grand coalition admits no Core allocation
(Section~\ref{s:numerical}). The scope differs as well. Linear production games
allocate within the grand coalition, whereas we optimize over coalition
\emph{structures} and enforce Core stability inside each formed coalition.

A second answer defines $V(c)$ through a fixed combinatorial problem on a graph
or network, which lets many Core-related questions be settled with specialized
combinatorial structure
\citep{borm2001operations,deng1999algorithmic,chalkiadakis2011computational}.
The \emph{integer minimization games} of \citet{caprara2010cost} subsume
these, with $c(S)$ the value of an integer program parameterized by the
coalition's incidence vector; subadditivity is automatic there, while a CIPG's
coalition policy can break its mirror, making the partition itself the
decision (Remark~\ref{rem:im_games} in the e-companion).

A third answer, and the one closest to practice, reads $V(c)$ off an operational
optimization model. \citet{gothe1996nucleolus} define $V(c)$ as the optimal
value of a vehicle routing problem and compute Core and nucleolus quantities by
constraint generation, evaluating $V(s)$ only when a violated inequality demands
it; this is the separation-as-optimization idea we develop in
Section~\ref{sec:separation}. Such routing games need not be balanced, so their
Core can be empty \citep{chardaire2001core}, which is itself an argument for
optimizing over partitions instead of assuming the grand coalition.
\citet{ozener2013allocating} allocate inventory-routing cost by Core-based
dual methods with IP-defined values.

Closest to our OCS problem, \citet{guajardo2015coalition} propose OR models for
choosing the coalition structure itself in collaborative logistics, assessing
stability separately for each candidate structure. We instead embed the
coalition-level IPs and the Core constraints in a single mixed-integer model
with exact single-level MIP separation, which makes the partition a decision
variable and enforces stability inside every formed coalition instead of
verifying it afterwards
\citep[surveys:][]{guajardo2016review,gansterer2018collaborative}.

Optimization-based characteristic functions arise wherever indivisibilities
and player-specific integer decisions are essential, from water resources to
transmission cost allocation, waste management, and shared energy storage
\citep{wang2003water,stamtsis2004use,eryganov2020application,wang2024cooperative}.

For the knapsack setting in particular, the axiomatic literature studies a
\emph{single} knapsack held by the grand coalition
\citep{arribillaga2022cooperative,Dror1990,DarmannKlamler2014}, whereas the
Cooperative Knapsack Game of Section~\ref{s:comp_results} gives every player
their own knapsack and every coalition a pooled one.

On the non-cooperative side, integer programming games model strategic
interaction in which each player solves a mixed-integer program and payoffs
couple through bilinear or shared-constraint structures
\citep{carvalho2023integer,dragotto2023zero,lee2024algorithms}. These models
form no coalitions.

Across these literatures the coalition values are given, LP-generated,
graph-generated, read off an operational model, or priced for a fixed grand
coalition. In a CIPG they are integer-programming values generated inside the
search over coalition structures, which fuses three problems usually treated separately: computing $V(c)$
is NP-hard, choosing the partition is combinatorial, and certifying stability
needs values for exponentially many subcoalitions.
Sections~\ref{s:cipg} and~\ref{sec:computation} address the three together.

\subsection{Contributions and Outline}

We introduce \emph{cooperative integer programming games} (CIPGs), the integer
counterpart of the linear production games of \citet{owen1975core}. Integrality
removes the dual-price construction that guarantees a nonempty Core in the
linear case, and evaluating a single coalition value is already NP-hard.

\begin{itemize}

\item \textbf{Structure of the characteristic function.} We show that the
coalition policy, not the CIPG framework, decides whether the grand coalition is
optimal. Full pooling makes $V$ superadditive
(Proposition~\ref{prop:superadd_natural}); restricted bounds can break it and
make the partition a decision worth making.

\item \textbf{Two OCS formulations.} We give a mixed-integer formulation of the
optimal coalition structure problem with an aggregated and a disaggregated
linearization, and compare them theoretically and computationally. The
disaggregated model leaves the integer problem unchanged but yields a
relaxation that is never weaker and is strictly tighter on every instance we
solve (Proposition~\ref{prop:disagg_LP}).

\item \textbf{Stability.} Gated stability inequalities enforce Core constraints
inside each formed coalition. Theorem~\ref{thm:CSS_projection} shows that the
resulting mixed-integer set projects exactly onto the stable payoffs, so
maximizing over it returns an OSCS. A lifted variant dominates it
pointwise (Proposition~\ref{prop:lifted_dominance}).

\item \textbf{Computation and experiments.} We separate the inequalities without
dualization, through a single-level MIP, and warm-start the solver with a primal
heuristic that returns provably stable partitions. The heuristic is the single
most effective computational component on the reported benchmark grid: it
accelerates the solves and makes the disaggregated formulation usable at scale.
Section~\ref{s:comp_results} instantiates the framework as a Cooperative
Knapsack Game and reports the study.

\end{itemize}

The remainder of the paper is organized as follows.
Section~\ref{s:cipg} formalizes CIPGs and asks when the coalition structure is a
decision worth making. Section~\ref{s:formulation} builds the mixed-integer model:
the OCS formulation with its two linearizations, the stability inequalities and
their lifted strengthening, and the exactness result that ties them together.
Section~\ref{sec:computation} solves that model, covering separation oracles, the
iterative and lazy-constraint architectures, the SCS-feasible warm start, and the
per-coalition payoff refinement, summarized in
Algorithm~\ref{alg:ocss_lazy}.
Section~\ref{s:comp_results} reports the computational study on cooperative
knapsack games, including the comparison with an exhaustive baseline, and
Section~\ref{s:conclusion} concludes. Proofs omitted from the main text are in
the e-companion (Section~\ref{app:proofs}).

\section{Cooperative Integer Programming Games}\label{s:cipg}

Two objects specify a CIPG: the problem each player solves alone, and the policy
that turns a set of players into a single problem they solve together.

\begin{definition}[Cooperative integer programming game]
\label{def:cipg}
A \emph{cooperative integer programming game} consists of a player set $N$, a
vector of decision variables $x^i$ for each player $i \in N$, and, for every
coalition $c \subseteq N$, an objective $f^c$ and a mixed-integer set
$\mathcal{X}^c$ in the members' variables $x^c := (x^i)_{i \in c}$ together with
auxiliary variables $y^c$ that the coalition controls jointly, giving
\begin{equation}
  V(c) \;=\; \max\ \bigl\{\, f^c(x^c, y^c) \;:\; (x^c, y^c) \in \mathcal{X}^c
  \,\bigr\}, \qquad V(\emptyset) = 0.
\label{eq:player_general}
\end{equation}
For a singleton this is player $i$'s own problem. We assume every program is
feasible and bounded, so that $V$ is finite and $(N,V)$ is a cooperative game.
\end{definition}

A coalition structure is itself a decision, so the model must switch a
player's decisions on and off with membership, and keeping each member's
$x^i$ identifiable is what makes that possible. The auxiliary
variables $y^c$ belong to the coalition and are not inherited when membership
changes.

We focus on linear objectives $f^c$. More general objectives whose
hypograph admits an exact mixed-integer representation can be incorporated
through auxiliary variables, which $\mathcal{X}^c$ is free to carry.

The definition becomes computationally interesting when the family
$\{\mathcal{X}^c\}$ is described by a \emph{coalition policy} rather than
listed. Any one number is the optimum of some trivial integer program, so a
listed family would admit every cooperative game with rational values, and an
algorithm could do nothing but fill in $2^{|N|}$ entries one at a time. A policy
instead builds $\mathcal{X}^c$ from the players in $c$, which is what lets
$V(s)$ be reached for a subset never examined before. It also carries the
modeling decision described in Section~\ref{s:intro}: which constraints combine,
which decisions the coalition takes jointly, what stays local to a player, and
which plans only collaboration makes available. Different policies give
different games on the same players.

One commitment is built into the definition. Since $\mathcal{X}^c$ depends on
$c$ alone, a coalition acts in isolation: what it achieves is unaffected by how
the remaining players arrange themselves, and no resource is contested between
two coalitions of the same partition. That places the game in characteristic
function form rather than partition function form, and it is why a partition
$CS(N)$ is worth exactly $\sum_{c \in CS(N)} V(c)$, the quantity
Section~\ref{ss:OCS_formulation} maximizes.

\subsection{A resource allocation game}
\label{ss:player_coalition_IP}

For the rest of the paper we fix one coalition policy. Each player has a single
budget constraint, which combines across a coalition, together with private
constraints that never combine. Section~\ref{ss:general_milp} of the
e-companion returns to the general case.

For each player $i \in N$ and item $j \in R$, let
$x^i_j \in \mathbb{Z}_+$ be the amount of item $j$ chosen by player $i$, with
profit coefficient $p^i_j \ge 0$ and common weight $a_j > 0$. Player $i$ has budget
$b^i$, a capacity $u^i_j$ limiting how much of item $j$ it may take, and
possibly additional private constraints $B^i x^i \le d^i$. When a coalition
forms its members pool their budgets $\{b^i\}_{i \in c}$ and retain their
private constraints, and a \emph{pooling parameter} $\alpha \in (0,1]$ caps the
coalition's total use of each common item at $\alpha$ times the sum of its
members' own capacities. Since $\alpha$ fixes $V$, it is part of the instance,
not of any formulation of it; Section~\ref{ss:V_structural} examines what
different values do.

Not every player has access to every item. For each item $j \in R$ let
$I_j \subseteq N$ be the players for whom $j$ is available. Given a coalition
$c \subseteq N$,
\begin{equation}
\begin{aligned}
  I^c_j &:= I_j \cap c,
    &\qquad R^c &:= \{\, j \in R : I^c_j \neq \emptyset \,\}, \\[2pt]
  R^c_{\mathrm{ind}} &:= \{\, j \in R^c : |I^c_j| = 1 \,\},
    &\qquad R^c_{\mathrm{com}} &:= \{\, j \in R^c : |I^c_j| \ge 2 \,\},
\end{aligned}
\label{eq:item_sets}
\end{equation}
so $R^c_{\mathrm{ind}}$ collects the items available to exactly one member of
$c$, which we call \emph{individual-specific}, and $R^c_{\mathrm{com}}$ those
available to two or more, which we call \emph{common}.

This designation is coalition-dependent, as Figure~\ref{fig:availability}
illustrates: an item available only to players $1$ and $3$ is
individual-specific in coalition $\{1,2\}$ and common in $\{1,3\}$. The sets
\eqref{eq:item_sets} are therefore recomputed for every coalition under
consideration, in the OCS formulations, in the separation oracles, and in the
experiments alike.

\begin{figure}[t]
\centering
\begin{tikzpicture}[
  ply/.style  ={circle,draw,thick,minimum size=5.5mm,inner sep=0pt,font=\small},
  plyo/.style ={circle,draw,gray!45,text=gray!45,minimum size=5.5mm,inner sep=0pt,font=\small},
  itm/.style  ={rectangle,draw,minimum size=5.5mm,inner sep=1pt,font=\small},
  com/.style  ={rectangle,draw,thick,fill=black!12,minimum size=5.5mm,inner sep=1pt,font=\small},
  itmo/.style ={rectangle,draw,gray!45,text=gray!45,minimum size=5.5mm,inner sep=1pt,font=\small},
  ee/.style   ={thick},
  eo/.style   ={gray!30},
  ttl/.style  ={font=\small},
  ann/.style  ={font=\scriptsize,align=center,anchor=north}]

\begin{scope}[xshift=0cm]
  \node[ttl] at (1.05,2.75) {(a) $c=\{1\}$};
  \node[ply] (a1) at (0,1.8) {1};  \node[plyo](a2) at (0,1.0) {2};  \node[plyo](a3) at (0,0.2) {3};
  \node[itm] (aA) at (2.1,2.1) {$A$}; \node[itm] (aB) at (2.1,1.4) {$B$};
  \node[itm] (aC) at (2.1,0.7) {$C$}; \node[itmo](aD) at (2.1,0.0) {$D$};
  \draw[ee](a1)--(aA); \draw[ee](a1)--(aB); \draw[ee](a1)--(aC);
  \draw[eo](a2)--(aB); \draw[eo](a3)--(aC); \draw[eo](a3)--(aD);
  \node[ann] at (1.05,-0.45) {$R^c_{\mathrm{ind}}=\{A,B,C\}$\\
                              $R^c_{\mathrm{com}}=\emptyset$\\
                              no aggregation};
\end{scope}

\begin{scope}[xshift=4.9cm]
  \node[ttl] at (1.05,2.75) {(b) $c=\{1,2\}$};
  \node[ply] (b1) at (0,1.8) {1};  \node[ply] (b2) at (0,1.0) {2};  \node[plyo](b3) at (0,0.2) {3};
  \node[itm] (bA) at (2.1,2.1) {$A$}; \node[com] (bB) at (2.1,1.4) {$B$};
  \node[itm] (bC) at (2.1,0.7) {$C$}; \node[itmo](bD) at (2.1,0.0) {$D$};
  \draw[ee](b1)--(bA); \draw[ee](b1)--(bB); \draw[ee](b1)--(bC);
  \draw[ee](b2)--(bB); \draw[eo](b3)--(bC); \draw[eo](b3)--(bD);
  \node[ann] at (1.05,-0.45) {$R^c_{\mathrm{ind}}=\{A,C\}$\\
                              $R^c_{\mathrm{com}}=\{B\}$\\
                              $y^c_B=x^1_B+x^2_B$};
\end{scope}

\begin{scope}[xshift=9.8cm]
  \node[ttl] at (1.05,2.75) {(c) $c=\{1,3\}$};
  \node[ply] (c1) at (0,1.8) {1};  \node[plyo](c2) at (0,1.0) {2};  \node[ply] (c3) at (0,0.2) {3};
  \node[itm] (cA) at (2.1,2.1) {$A$}; \node[itm] (cB) at (2.1,1.4) {$B$};
  \node[com] (cC) at (2.1,0.7) {$C$}; \node[itm] (cD) at (2.1,0.0) {$D$};
  \draw[ee](c1)--(cA); \draw[ee](c1)--(cB); \draw[ee](c1)--(cC);
  \draw[eo](c2)--(cB); \draw[ee](c3)--(cC); \draw[ee](c3)--(cD);
  \node[ann] at (1.05,-0.45) {$R^c_{\mathrm{ind}}=\{A,B,D\}$\\
                              $R^c_{\mathrm{com}}=\{C\}$\\
                              $y^c_C=x^1_C+x^3_C$};
\end{scope}
\end{tikzpicture}
\caption{Availability and the coalition-dependent split of the item set.
Circles are players, squares are items, and an edge means availability. Members
of $c$ are drawn in black, and shaded squares are the common items, reachable by
two or more members. Item $B$ is common in (b) but individual-specific in (c),
and item $C$ the other way round: the split depends on the coalition, not on the
item. Note also that $D$ is out of reach until player $3$ joins.}
\label{fig:availability}
\end{figure}

For common items we introduce coalition-level auxiliary variables
\begin{equation}
  y^c_j \;=\; \sum_{i \in I^c_j} x^i_j, \qquad j \in R^c_{\mathrm{com}},
\label{eq:link_y}
\end{equation}
which aggregate total usage of item $j$ in coalition $c$. Integrality is placed
on $y^c_j$ and dropped from the player-level $x^i_j$ for
$j \in R^c_{\mathrm{com}}$, which reflects the two-stage nature of a coalition
decision: the coalition first commits to how many units of each common item to
activate, an integer choice recorded by $y^c_j$, and the resulting total is then
divided among the contributing members, a continuous one recorded by
$x^i_j \ge 0$ with $\sum_i x^i_j = y^c_j$. Items in $R^c_{\mathrm{ind}}$ keep
their integrality, since one player decides them.

Dropping integrality on those $x^i_j$ does not change $V(c)$. A common item's
$x^i_j$ appears only in the objective, in \eqref{eq:coal_IP_general_link} and in
its own bound, so for fixed integer $y^c_j$ the division among members is an
interval problem whose optimum is attained at an integer point. We assume
throughout that \eqref{eq:coal_IP_general_other} does not tie a member's
common items to its other decisions, and that the zero plan is feasible for
every program, so $V(c) \ge 0$ for every coalition.

The coalition value $V(c)$ is the optimal value of
\begin{subequations}\label{eq:coal_IP_general}
\begin{align}
V(c) \;=\;
&\max \sum_{j \in R^c_{\mathrm{com}}} \sum_{i \in I^c_j} p^i_j x^i_j
      + \sum_{j \in R^c_{\mathrm{ind}}} \sum_{i \in I^c_j} p^i_j x^i_j
    \label{eq:coal_IP_general_obj} \\[0.2em]
\text{s.t.}\quad
&\sum_{j \in R^c_{\mathrm{com}}} a_j y^c_j
 + \sum_{j \in R^c_{\mathrm{ind}}} \sum_{i \in I^c_j} a_j x^i_j
 \;\le\; \sum_{i \in c} b^i,
    \label{eq:coal_IP_general_budget} \\
&y^c_j = \sum_{i \in I^c_j} x^i_j,
    \quad \forall j \in R^c_{\mathrm{com}},
    \label{eq:coal_IP_general_link} \\
&B^i x^i \le d^i,
    \quad \forall i \in c,
    \label{eq:coal_IP_general_other} \\
&0 \le x^i_j \le u^i_j,
    \quad \forall i \in c,\ j \in R^c,
    \label{eq:coal_IP_general_x_ub} \\
&0 \le y^c_j \le \alpha \sum_{i \in I^c_j} u^i_j,
    \quad \forall j \in R^c_{\mathrm{com}},
    \label{eq:coal_IP_general_y_ub} \\
&y^c_j \in \mathbb{Z}_+, \quad \forall j \in R^c_{\mathrm{com}}, \qquad
 x^i_j \in \mathbb{Z}_+, \quad \forall i \in c,\ j \in R^c_{\mathrm{ind}}.
    \label{eq:coal_IP_general_domains}
\end{align}
\end{subequations}

Taking $c=\{i\}$ recovers the individual problem, since
$|I_j \cap \{i\}| \le 1$ for every $j$ forces
$R^{\{i\}}_{\mathrm{com}} = \emptyset$. We write its value $V(\{i\})$ and use it
throughout as the singleton value.

\subsection[Structural properties of the characteristic function V]{Structural properties of the characteristic function $V$}
\label{ss:V_structural}

The coalition structure is a decision only if some partition can beat the grand
coalition, and whether one ever does is settled here rather than by the CIPG
framework. Under full pooling it never does, so the OCS problem is trivial
(the OSCS problem is not); under restricted pooling it can, and that is the
regime the rest of the paper addresses. Two properties of $V$ carry the argument, a third bears on
how payoffs are divided.

Call $V$ \emph{superadditive} if $V(c \cup d) \ge V(c) + V(d)$ for all disjoint
coalitions $c,d$, \emph{subadditive} if the reverse inequality holds, and
\emph{convex} if $V(c) + V(d) \le V(c \cup d) + V(c \cap d)$ for all $c,d$. Repeated merging
shows that the grand coalition is an OCS in a superadditive game, and
symmetrically that the singleton partition is an OCS in a subadditive one.
Convexity is strictly stronger than superadditivity and matters later rather
than here: on a convex game the Shapley value lies in the Core
(Section~\ref{app:refinement_details} of the e-companion), so convexity would make
Section~\ref{ss:coalition_refinement} free.

\medskip
\noindent\textbf{(i) Full pooling, $\alpha = 1$.}
A coalition may then use the entire sum of its members' capacities on each common
item, and \eqref{eq:coal_IP_general_y_ub} is implied by
\eqref{eq:coal_IP_general_link} and \eqref{eq:coal_IP_general_x_ub}, so it never
binds on its own.

\begin{proposition}[Superadditivity]
\label{prop:superadd_natural}
Assume full pooling, $\alpha = 1$. Then $V$ is superadditive.
\end{proposition}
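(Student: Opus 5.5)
The plan is to argue by feasibility transfer. Take disjoint coalitions $c,d$ and optimal solutions $(x^c,y^c)$ of \eqref{eq:coal_IP_general} for $c$ and $(x^d,y^d)$ for $d$, both of which exist by Definition~\ref{def:cipg}. The candidate for $e := c\cup d$ is simply the concatenation $x^e := (x^i)_{i\in c\cup d}$, with each member keeping exactly the plan it had. For every item $j$ that is common in $e$ we set
\[
y^e_j := \sum_{i\in I^e_j} x^i_j .
\]
The objective \eqref{eq:coal_IP_general_obj} is a sum over members of $p^i_j x^i_j$, so once we know the candidate is feasible its value is $V(c)+V(d)$, and therefore $V(e)\ge V(c)+V(d)$.

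Feasibility is checked constraint by constraint.
\begin{itemize}
\item The private constraints \eqref{eq:coal_IP_general_other} and the individual bounds \eqref{eq:coal_IP_general_x_ub} are inherited unchanged from the two component solutions.
\item The linking constraint \eqref{eq:coal_IP_general_link} holds by the definition of $y^e_j$.
\item For the pooled bound \eqref{eq:coal_IP_general_y_ub} we use $\alpha=1$. As remarked just before the statement, the bound $y^e_j\le\sum_{i\in I^e_j}u^i_j$ follows from \eqref{eq:coal_IP_general_link} and \eqref{eq:coal_IP_general_x_ub}. This is the only place where full pooling is needed.
\item For the budget \eqref{eq:coal_IP_general_budget}, each side splits additively. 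The left-hand side for $e$ equals $\sum_{j}\sum_{i\in I^e_j} a_j x^i_j$, and this sum separates into the $c$-part and the $d$-part. Each part equals the left-hand side of the budget for its own coalition, because there $a_j y^c_j=\sum_{i\in I^c_j}a_jx^i_j$ for common items. The budgets $\sum_{i\in c}b^i$ and $\sum_{i\in d}b^i$ add up to $\sum_{i\in e}b^i$, so the combined budget holds.
\end{itemize}

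The main obstacle is integrality \eqref{eq:coal_IP_general_domains}, because the split into common and individual items depends on the coalition, as Figure~\ref{fig:availability} shows. An item $j\in R^e_{\mathrm{com}}$ falls into one of three cases.
\begin{itemize}
\item It is common in both $c$ and $d$. Then $y^e_j=y^c_j+y^d_j$, a sum of two integers.
\item It is common in one coalition and individual in the other. Then $y^e_j$ is an integer $y$-value plus an integer $x$-value.
\item It is individual in both, or reachable from only one side. Then every term $x^i_j$ is integral.
\end{itemize}
In all three cases $y^e_j\in\mathbb{Z}_+$. An item that is individual in $e$ is individual, with the same single player, in whichever of $c$ or $d$ contains that player, so its $x^i_j$ stays integral. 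Player-level $x^i_j$ for common items of $e$ are allowed to be continuous, so nothing more is needed.

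A remaining subtlety is that a member's $x^i_j$ for an item that was common in $c$ may be fractional. If that item is individual in $e$, we would need integrality there. This cannot happen: $R^c_{\mathrm{com}}\subseteq R^e_{\mathrm{com}}$ because $I^c_j\subseteq I^e_j$, so an item common in $c$ stays common in $e$. If one still wants integral player-level values, the argument in the text applies: for a fixed integer $y$, the division among members is an interval problem whose optimum is attained at an integer point. Superadditivity follows.
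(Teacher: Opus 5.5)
Your argument is correct and is essentially the paper's proof: you concatenate the two optimal plans, set $y^e_j$ to the members' total, inherit the private constraints and bounds, sum the budgets, and get integrality of $y^e_j$ from the disjoint union $I^e_j = I^c_j \cup I^d_j$. There is one slip in your third integrality case. An item of $R^e_{\mathrm{com}}$ reachable from only one side, say $I^d_j=\emptyset$, has $|I^c_j|\ge 2$ and is therefore common in $c$, so its terms $x^i_j$ need not be integral; the conclusion still holds because $y^e_j = y^c_j$, which is how the paper's per-coalition bookkeeping ($y^c_j$, a single integer $x^i_j$, or $0$) covers this case.
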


\begin{proof}{Proof}
Let $c,d \subseteq N$ be disjoint and take optimal solutions $(x^{c},y^{c})$ for
$V(c)$ and $(x^{d},y^{d})$ for $V(d)$. Define a candidate for $c \cup d$ by
$\hat x^i_j = x^{c,i}_j$ if $i \in c$, $\hat x^i_j = x^{d,i}_j$ if $i \in d$,
$\hat x^i_j = 0$ otherwise, and
$\hat y^{\,c \cup d}_j := \sum_{i \in I^{c \cup d}_j} \hat x^i_j$ for
$j \in R^{c \cup d}_{\mathrm{com}}$. Since $I^{c \cup d}_j$ is the disjoint
union of $I^c_j$ and $I^d_j$, this total splits into the two coalitions'
totals, each integral ($y^c_j$ if $j$ is common in $c$, a single integer
$x^{c,i}_j$ if individual-specific, $0$ if $j \notin R^c$; likewise for $d$),
so $\hat y^{\,c \cup d}_j$ is integral, and $0 \le \hat x^i_j \le u^i_j$
gives $0 \le \hat y^{\,c \cup d}_j \le \sum_{i \in I^{c \cup d}_j} u^i_j$.
The budget constraint for $c \cup d$
holds by summing the budgets of $c$ and $d$, and every private constraint
$B^i x^i \le d^i$ is unaffected since each $x^i$ is unchanged. The objective
value of $(\hat x,\hat y)$ is $V(c) + V(d)$, so $V(c \cup d) \ge V(c) + V(d)$.
\Halmos
\end{proof}

Under full pooling Proposition~\ref{prop:superadd_natural} identifies the OCS
\emph{in advance}, so the
full-pooling runs of Section~\ref{s:comp_results} measure how quickly each
formulation \emph{certifies} an optimum rather than how quickly it finds one.
It also explains why the lifted stability cut never falls back to the basic cut
in that regime: superadditivity forces the synergy condition
$V(s) \ge \sum_{i \in s} V(\{i\})$ for every subset
(Section~\ref{ss:lifted_cuts}). Finally, it is the reason the restricted-bound
setting of part (ii) exists at all: without it no partition could strictly
outperform the grand coalition.

Superadditivity does not bring convexity with it. Capacity saturation makes
marginal contributions shrink as coalitions grow: in a three-player instance
with a single item too expensive for any one player, the second member of a
coalition is worth the full item and the third is worth nothing, which breaks
convexity while leaving superadditivity intact
(Example~\ref{ex:nonconvex} in the e-companion).

CIPGs therefore need not be convex, and the lack of convexity removes the
Shapley--Core guarantee, consistent with the observed violations on the larger
pooled coalitions of Section~\ref{s:numerical}.

\medskip
\noindent\textbf{(ii) Restricted pooling, $\alpha < 1$.}
A coalition then cannot fully exploit that sum. No later result depends on the
linear form: the stability inequalities, the separation and the heuristic need
only that the cap depends on which members are present, so a different policy,
in the extreme a binary activation $y^c_j \in \{0,1\}$ for a shared resource,
would change $V$ but not the machinery. Such a bound arises whenever the
limit attaches to the applicant rather than to the activity: public funding
programs cap awards per recipient, and spectrum aggregation caps or airport
slot rules bind a merged licensee once rather than twice, so cooperation is
worth having for other reasons but costs the coalition part of what its
members were separately entitled to.

Superadditivity can now fail. With a binary common item and two players, each alone
sets $x^i_1 = 1$ and obtains value $1$, while the coalition $\{1,2\}$ faces a
single shared unit $y^{\{1,2\}}_1 \in \{0,1\}$ and is worth only $1 < 1+1$. In
this regime the OCS is no longer guaranteed to be the grand coalition; it
becomes instance-dependent and nontrivial partitions can beat $\{N\}$,
as assumed from Section~\ref{ss:OCS_formulation} onward.

\section{A Mixed-Integer Formulation}\label{s:formulation}

We now build a model whose solutions are exactly the stable coalition
structures. It comes in two layers. Section~\ref{ss:OCS_formulation} selects a
partition and solves the coalition programs of every formed group, which gives
an optimal coalition structure but enforces no stability;
Sections~\ref{ss:stability_CSS} and~\ref{ss:lifted_cuts} add a family of
inequalities that cut off the partitions in which some subcoalition would
rather deviate. Together the two layers describe the SCS payoffs exactly
(Theorem~\ref{thm:CSS_projection}), so an OSCS is obtained by maximizing over
them.

\subsection{OCS formulations: aggregated and disaggregated}
\label{ss:OCS_formulation}

We now formulate a mixed-integer program that selects a coalition structure and
solves the coalition-level integer programs for each formed group. We reuse the
item set $R$ and the availability sets $I_j$ from
Section~\ref{ss:player_coalition_IP}. Because the split of \eqref{eq:item_sets}
into common and individual items depends on which coalition forms, we do not fix
global sets $R_{\mathrm{com}}$ and $R_{\mathrm{ind}}$; the classification is
determined endogenously by indicator variables.

Coalitions are represented by \emph{group indices}. Since at most $|N|$ groups
can form, we take $G := N$ as the index set and say that the players with
$z_{ig} = 1$ form coalition $c(g)$. Table~\ref{tab:OCS_notation} in the
e-companion collects the models' notation; notation local to separation and
the heuristic is introduced where stated
(Sections~\ref{sec:separation}, \ref{sec:heuristics}).

The variables are group-indexed copies of those in \eqref{eq:coal_IP_general}:
$x^{i,g}_j \ge 0$ and $y^g_j \in \mathbb{Z}_+$ replace $x^i_j$ and $y^c_j$ for the
coalition realized in group $g$, with $z_{ig} \in \{0,1\}$ recording the
assignment, $w^g_j \in \{0,1\}$ marking item $j$ as common in group $g$,
$\pi_g \ge 0$ the profit group $g$ realizes, and $v_i \ge 0$ the payoff to player
$i$. We write
$\mathrm{AGG}_\alpha$ for the \emph{aggregated} formulation of the
$\mathrm{OCS}_\alpha$ problem: the subscript matters, since changing $\alpha$
changes $V$ and so changes the problem, not just its formulation. Its constraints
follow the order of \eqref{eq:coal_IP_general}, so the two can be read side by
side.
\begingroup
\allowdisplaybreaks
\begin{subequations}\label{eq:OCS}
\begin{align}
\blockhead{objective, the partition, and profit accounting:}
    \max \quad
    & \sum_{i \in N} v_i
      \label{eq:OCS_obj} \\
    \text{s.t.} \quad
    & \sum_{g \in G} z_{ig} = 1,
        && \forall i \in N,
        \label{eq:OCS_partition} \\
    & \pi_g = \sum_{j \in R} \sum_{i \in I_j} p^i_j x^{i,g}_j
             = \sum_{i \in N} v_i z_{ig},
        && \forall g \in G,
        \label{eq:OCS_efficiency} \\
\blockhead{coalition program for each group, exactly
  \eqref{eq:coal_IP_general_budget}--\eqref{eq:coal_IP_general_y_ub}:}
    & \sum_{j \in R} a_j y^g_j
      \;\le\; \sum_{i \in N} b^i z_{ig},
        && \forall g \in G,
        \label{eq:OCS_cap} \\
    & y^g_j = \sum_{i \in I_j} x^{i,g}_j,
        && \forall g \in G,\ \forall j \in R,
        \label{eq:OCS_link} \\
    & B^i x^{i,g} \le d^i z_{ig},
        && \forall g \in G,\ \forall i \in N,
        \label{eq:OCS_other} \\
    & 0 \le x^{i,g}_j \le u^i_j z_{ig},
        && \forall g \in G,\ \forall j \in R,\ \forall i \in I_j,
        \label{eq:OCS_x_ub} \\
    & y^g_j \;\le\; \sum_{i \in I_j} u^i_j z_{ig},
        && \forall g \in G,\ \forall j \in R,
        \label{eq:OCS_y_ub} \\
\blockhead{constraints that endogenous partitioning requires, with
  domains as in \eqref{eq:coal_IP_general_domains}:}
    & z_{ig} = 0,
        && \forall g \in G,\ \forall i \in N \text{ with } i < g,
        \label{eq:OCS_sym_leader_range} \\
    & \sum_{i \in N} z_{ig} \le |N|\, z_{gg},
        && \forall g \in G,
        \label{eq:OCS_sym_leader_nonempty} \\
    & v_i \ge 0,\quad
      y^g_j \in \mathbb{Z}_+,\quad
      z_{ig} \in \{0,1\},
        && \forall i \in N,\ \forall g \in G,\ \forall j \in R,
        \label{eq:OCS_domains} \\
\blockhead{needed only when $\alpha < 1$; omitted under full pooling:}
    & 2\,w^g_j \;\le\; \sum_{i \in I_j} z_{ig} \;\le\; (|I_j|-1)\,w^g_j + 1,
        && \forall g \in G,\ \forall j \in R,
        \label{eq:OCS_w} \\
    & y^g_j \;\le\; \alpha \sum_{i \in I_j} u^i_j z_{ig} + M_j(1 - w^g_j),
        && \forall g \in G,\ \forall j \in R,
        \label{eq:alpha_restriction} \\
    & w^g_j \in \{0,1\},
        && \forall g \in G,\ \forall j \in R.
        \label{eq:OCS_w_dom}
\end{align}
\end{subequations}
\endgroup

The objective \eqref{eq:OCS_obj} maximizes total payoff. Constraint
\eqref{eq:OCS_partition} puts each player in exactly one group, so the nonempty
groups induce a partition of $N$. The chain \eqref{eq:OCS_efficiency} says that
what a group produces is what it pays out: summing it over $g$ and using
\eqref{eq:OCS_partition} gives $\sum_{g} \pi_g = \sum_{i} v_i$, so one could
equivalently maximize total profit $\sum_{g \in G} \pi_g$. Since $\pi_g$ is the
objective value of a feasible, not necessarily optimal, production plan, it
satisfies $\pi_g \le V(c(g))$ for the realized coalition
$c(g) = \{i \in N : z_{ig} = 1\}$, with equality at any optimal solution.
Constraints \eqref{eq:OCS_cap} to \eqref{eq:alpha_restriction}, with the domains
in \eqref{eq:OCS_domains}, embed for each group a coalition-level integer program
consistent with \eqref{eq:coal_IP_general}, and in the same order: budgets of
assigned members are pooled in \eqref{eq:OCS_cap}, items aggregate through
\eqref{eq:OCS_link}, private rows are preserved by \eqref{eq:OCS_other}, usage is
bounded by \eqref{eq:OCS_x_ub}, and \eqref{eq:alpha_restriction} carries the
coalition policy, matching \eqref{eq:coal_IP_general_y_ub} one for one. The
indicators $w^g_j$ it refers to are fixed by \eqref{eq:OCS_w} below.

The $\alpha$-restriction \eqref{eq:alpha_restriction} carries the coalition
policy into the formulation. It is released by a big-$M$ term keyed to $w^g_j$
because it should bind only on common items: an item reaching a single member is
already capped by \eqref{eq:OCS_x_ub}. At $\alpha = 1$ it is redundant, since
summing \eqref{eq:OCS_x_ub} over $i \in I_j$ with \eqref{eq:OCS_link} gives
$y^g_j \le \sum_{i \in I_j} u^i_j z_{ig}$ already, so one formulation covers both
regimes. The final block of \eqref{eq:OCS} collects everything this needs, so under full
pooling it is simply omitted: our implementation does so, removing
$|G| \cdot |R|$ binaries, which is $1500$ at $n = 30$. What remains is then
exactly \eqref{eq:coal_IP_general} lifted to group indices, with
\eqref{eq:OCS_y_ub} in the role of \eqref{eq:coal_IP_general_y_ub}. That row is
implied by \eqref{eq:OCS_link} and \eqref{eq:OCS_x_ub} and is stated only to make
the correspondence visible.
The constant $M_j$ has to be large enough to make
\eqref{eq:alpha_restriction} redundant when $w^g_j = 0$. In that case
the upper bound in \eqref{eq:OCS_w} leaves at most one member of $I_j$ in the
group, so $y^g_j$
cannot exceed that member's own bound $u^i_j$ by \eqref{eq:OCS_link} and
\eqref{eq:OCS_x_ub}, and $M_j = (1-\alpha)\max_{i \in I_j} u^i_j$ suffices. Any
larger value is also valid but only weakens the relaxation.

The two-sided bound \eqref{eq:OCS_w} makes $w^g_j = 1$ exactly when two or more players
of $I_j$ are assigned to group $g$: the upper bound forces $w^g_j = 1$ once a
second player joins, and the lower bound forbids it before that, which is
the endogenous version of the classification \eqref{eq:item_sets}. Their purpose
is to let the restricted bounds of Section~\ref{ss:V_structural} apply to common
items only, which is where they mean anything: an item reaching a single member
is already capped at $u^i_j$ by \eqref{eq:OCS_x_ub}, and there is no sharing to
restrict.

The parameter interpolates between the two regimes of
Section~\ref{ss:V_structural}. At $\alpha = 1$ the bound coincides with the
full-pooling bound, so Proposition~\ref{prop:superadd_natural}
applies, $V$ is superadditive, and the grand coalition is an OCS. Smaller
$\alpha$ is the restricted regime in which superadditivity fails and the
partition becomes a decision worth making. Section~\ref{s:comp_results} reports
both, at $\alpha = 1$ and $\alpha = 0.5$.

Constraints \eqref{eq:OCS_sym_leader_range} and
\eqref{eq:OCS_sym_leader_nonempty} implement a group-leader symmetry-breaking
scheme: group $g$ contains only players with indices $i \ge g$, and player $g$
belongs to group $g$ whenever the group is nonempty, so each nonempty group is
led by its smallest-indexed member and every coalition structure has a canonical
representation.

Any optimal solution to \eqref{eq:OCS} therefore yields an OCS.

Constraint \eqref{eq:OCS_efficiency} involves the product $v_i z_{ig}$ and is
bilinear. We linearize it with standard big-$M$ techniques: for each pair
$(i,g)$ introduce $\phi_{ig} = v_i z_{ig}$ with
$\phi_{ig} \le v_i$, $\phi_{ig} \le M z_{ig}$,
$\phi_{ig} \ge v_i - M(1-z_{ig})$, and $\phi_{ig} \ge 0$, where $M$ bounds any
payoff. Since a player's payoff cannot exceed the value of a coalition it
belongs to, a computation-free bound is
$M = \sum_{i \in N} \sum_{j \in R:\, i \in I_j} p^i_j u^i_j$. The right-hand
side of \eqref{eq:OCS_efficiency} becomes $\sum_{i \in N} \phi_{ig}$. We call
this the \emph{aggregated formulation}, or AGG, with LP relaxation
$F_{\mathrm{agg}}$.

\subsubsection{Disaggregated formulation.}
\label{ss:disagg}

The aggregated formulation introduces $3|N| \cdot |G|$ McCormick constraints and
relies on a big-$M$ subtraction in $\phi_{ig} \ge v_i - M(1-z_{ig})$, which
weakens the LP relaxation. A \emph{disaggregated} alternative, or DIS, removes
every McCormick constraint. Disaggregating a variable controlled by a
multiple-choice constraint and linking the copies is a standard reformulation
technique \citep{frangioni2006perspective,gunluk2010perspective}; what is
specific here is that it renders the McCormick system redundant outright, and
the size of the effect measured in Section~\ref{s:numerical}. Replace the single payoff variable $v_i$ by
\emph{per-group} variables $v_{ig} \ge 0$, the payoff player $i$ receives from
group $g$. These are tied to the total, efficiency becomes purely linear, and
the on/off structure is carried by a single variable upper bound:
\begin{equation}\label{eq:disagg}
  \underbrace{v_i = \sum\nolimits_{g \in G} v_{ig}}_{\text{linking}},\ \forall i \in N;
  \quad
  \underbrace{\pi_g = \sum\nolimits_{i \in N} v_{ig}}_{\text{efficiency}},\ \forall g \in G;
  \quad
  \underbrace{v_{ig} \le M z_{ig}}_{\text{upper bound}},\ \forall i \in N,\, g \in G.
\end{equation}
with the same computation-free bound $M$, which now appears only in
the variable upper bound and never in a subtraction. The per-player quantity
$U_i = \sum_{j:\, i \in I_j} p^i_j u^i_j$ would not do: a Core allocation may
reward a player beyond its own profit potential, for instance one whose
contribution is purely budget (Example~\ref{ex:budget_only} in the
e-companion). The disaggregated model replaces the second equality of
\eqref{eq:OCS_efficiency}, and the McCormick linearization, by \eqref{eq:disagg}, keeping every other constraint of
\eqref{eq:OCS}, including the $\alpha$-restriction. We call it
$\mathrm{DIS}_\alpha$, write $F_{\mathrm{disagg}}$ for its LP relaxation and, setting
$\phi_{ig} := v_{ig}$, compare the two in the coordinates $(v,\phi,z,x,y,\pi)$.

\begin{proposition}[Same integer problem, tighter relaxation]
\label{prop:disagg_LP}
The aggregated and disaggregated formulations have the same mixed-integer
feasible points. Their relaxations satisfy
$F_{\mathrm{disagg}} \subseteq F_{\mathrm{agg}}$, and the inclusion can be
strict. Moreover, adding the linking constraint $v_i = \sum_g \phi_{ig}$ to
$F_{\mathrm{agg}}$ recovers $F_{\mathrm{disagg}}$ exactly.
\end{proposition}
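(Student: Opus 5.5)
The plan is to work throughout in the common coordinates $(v,\phi,z,x,y,\pi)$ with $\phi_{ig}:=v_{ig}$. The two systems share every constraint except those involving $\phi$. Write $A$ for the AGG block, namely
\[
\phi_{ig}\le v_i,\quad \phi_{ig}\le Mz_{ig},\quad \phi_{ig}\ge v_i-M(1-z_{ig}),\quad \phi_{ig}\ge 0,\quad \pi_g=\sum\nolimits_i\phi_{ig}.
\]
Write $D$ for the DIS block, namely
\[
v_i=\sum\nolimits_g\phi_{ig},\quad \pi_g=\sum\nolimits_i\phi_{ig},\quad 0\le\phi_{ig}\le Mz_{ig}.
\]
Every claim then reduces to comparing $A$ and $D$ in the presence of the shared constraints, in particular the partition equality \eqref{eq:OCS_partition}. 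I would prove the LP inclusion first, since the remaining claims reuse it.

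\textbf{Inclusion $F_{\mathrm{disagg}}\subseteq F_{\mathrm{agg}}$.} Take a point satisfying $D$ with $z\in[0,1]$. The constraints $\phi_{ig}\ge 0$, $\phi_{ig}\le Mz_{ig}$ and the efficiency row appear verbatim in $A$. The bound $\phi_{ig}\le v_i$ follows because $v_i$ is a sum of nonnegative terms, one of which is $\phi_{ig}$. The only nontrivial step is the big-$M$ subtraction. Here I use the linking row, the variable upper bounds and \eqref{eq:OCS_partition}:
\[
v_i-\phi_{ig}=\sum\nolimits_{h\ne g}\phi_{ih}\le M\sum\nolimits_{h\ne g}z_{ih}=M(1-z_{ig}).
\]
This is exactly where disaggregation pays off: the partition row converts the sum of variable upper bounds into the McCormick lower bound.

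\textbf{Linking recovers $F_{\mathrm{disagg}}$.} The set $A$ together with $v_i=\sum_g\phi_{ig}$ contains every row of $D$, so $F_{\mathrm{agg}}\cap\{v_i=\sum_g\phi_{ig}\}\subseteq F_{\mathrm{disagg}}$. The reverse inclusion is the previous step, since every point of $F_{\mathrm{disagg}}$ satisfies linking.

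\textbf{Same integer points.} With $z$ binary, $A$ forces $\phi_{ig}=v_iz_{ig}$. Because exactly one $z_{ig}$ equals $1$, we get $\sum_g\phi_{ig}=v_i$, so linking holds automatically. By the previous step the integer point is then DIS-feasible. The converse direction is the inclusion already proved.

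\textbf{Strictness.} This is the main obstacle, because it needs an explicit fractional point in which all the other constraints, including \eqref{eq:OCS_w}--\eqref{eq:alpha_restriction}, remain satisfiable. I would use two players. The symmetry-breaking constraints force $z_{11}=1$ and $z_{12}=0$. I would set $z_{21}=z_{22}=\tfrac12$, which satisfies \eqref{eq:OCS_sym_leader_nonempty}, and take the zero production plan $x=0$, $y=0$, $\pi=0$. This plan is feasible because the zero plan is assumed feasible, so $d^i\ge 0$. The $w$ variables can be chosen in $[0,1]$ to satisfy \eqref{eq:OCS_w}; for instance, any value in $[\tfrac12\sum_{i\in I_j}z_{ig}\cdot\frac{1}{|I_j|-1}\,,\,\tfrac12\sum_{i\in I_j}z_{ig}]$ intersected with $[0,1]$ works after a short check, with the edge cases $|I_j|\le 1$ handled separately. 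Then set $v_1=\phi_{11}=0$, $v_2=\varepsilon\in(0,M/2]$ and $\phi_{21}=\phi_{22}=0$. The AGG rows hold, since $\phi_{2g}=0\ge\varepsilon-M/2$, and efficiency gives $0=0$. Linking fails, however, because $v_2=\varepsilon\ne 0=\sum_g\phi_{2g}$. Hence this point lies in $F_{\mathrm{agg}}\setminus F_{\mathrm{disagg}}$.
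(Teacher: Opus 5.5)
Your proof is correct. For the inclusion, for the linking claim and for the integer points it follows the paper's argument: linking, the variable upper bounds and the partition row give $\phi_{ig}\ge v_i-M(1-z_{ig})$, and at binary $z$ the McCormick rows force $\phi_{ig}=v_iz_{ig}$. Your two-inclusion phrasing of the linking claim is slightly more careful than the paper's ``yields exactly the constraint set'', since $F_{\mathrm{agg}}$ plus linking still carries two redundant McCormick rows.

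The genuine difference is the strictness witness. The paper takes $v_2=\phi_{21}=\phi_{22}=50$ at $z_{21}=z_{22}=\tfrac12$, so linking fails because $\sum_g\phi_{2g}=100>v_2$. It must also assume instance data able to generate group profits of $50$ in each group, which it only sketches.

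Your point has zero production and $v_2=\varepsilon>0=\sum_g\phi_{2g}$, so linking fails in the opposite direction. It needs nothing from the instance beyond $M>0$ and feasibility of the zero plan. That feasibility gives $b^i\ge 0$ as well as $d^i\ge 0$, so \eqref{eq:OCS_cap} and \eqref{eq:OCS_other} hold at fractional $z$. You also check the $w$ rows, which the paper passes over.

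Your point shows something the paper's does not. Its AGG objective is $\sum_i v_i=\varepsilon$, up to $M/2$, while $\sum_g\pi_g=0$. So the relaxation pays out value that nothing produces, which is the direction in which $F_{\mathrm{agg}}$ overstates the root bound. In the paper's point production exceeds payout, which illustrates double counting but does not by itself raise the objective.

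Two small tidy-ups:
\begin{itemize}
\item Your interval for $w^g_j$ is valid only because $\sum_{i\in I_j}z_{ig}\le 2$ with two players. It is simpler to set $w^g_j=\tfrac12\sum_{i\in I_j}z_{ig}$ when $|I_j|=2$ and $w^g_j=0$ otherwise.
\item Since ``can be strict'' is existential, say explicitly that you pick an instance with some positive profit, so that $M>0$ and $(0,M/2]$ is nonempty.
\end{itemize}
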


A byproduct of the proof (Section~\ref{app:proofs} of the e-companion) is that
under disaggregation every McCormick constraint is redundant, a net reduction of
$2|N| \cdot |G| - |N|$ constraints. Strictness rests on a single fractional
point, but the practical effect is larger: on all 30 instances of
Section~\ref{s:numerical} the disaggregated root bound is strictly tighter, by
margins between $45$ and $98\%$.

\subsection{Stability inequality for SCS}
\label{ss:stability_CSS}

The OCS formulation \eqref{eq:OCS} maximizes total coalition value over all
partitions of $N$ but does not enforce stability: a formed coalition may admit a
blocking subcoalition. We now introduce a family of \emph{stability
inequalities} restricting \eqref{eq:OCS} to \emph{stable coalition structures}
(SCS), in which every formed coalition is in the Core with respect to itself.

Each group index $g \in G$ corresponds to a potential coalition
$c(g) := \{ i \in N : z_{ig} = 1 \}$. For each formed coalition, Core stability
requires $\sum_{i\in s} v_i \ge V(s)$ for all $s \subseteq c(g)$. We encode
these inequalities only when the players of $s$ are together in some coalition,
and we do so without coalition-indicator variables by gating on the assignment
variables. For each subset $s \subseteq N$ and each group $g \in G$ we impose
the \emph{stability inequality}
\begin{equation}
\sum_{i\in s} v_i
\;\ge\;
V(s) - M_s \Bigl( |s| - \sum_{i\in s} z_{ig} \Bigr),
\label{eq:stability_bigM}
\end{equation}
where $M_s \ge \max\{V(s),\, 0\}$. The gate term $|s| - \sum_{i\in s} z_{ig}$
vanishes exactly when all of $s$ is assigned to group $g$, in which case
\eqref{eq:stability_bigM} is the Core inequality for $s$; otherwise the
right-hand side drops to at most zero and the inequality is inactive. Validity
must hold for \emph{every} pair $(g,s)$, including pairs with
$s \not\subseteq c(g)$, because a cut persists in the model once generated.

Write $\mathcal{F}$ for the mixed-integer feasible set of
$\mathrm{AGG}_\alpha$.

\begin{proposition}[Validity and exact separation]
\label{prop:stability_valid}
Let $s \subseteq N$ and $g \in G$, and suppose $M_s \ge \max\{V(s), 0\}$ and
$V(\{i\}) \ge 0$ for all $i \in s$; both hold in our model with $M_s = V(s)$,
since the empty solution is feasible and profits are nonnegative.
\begin{itemize}
\item[(i)] \emph{(Validity)} Every mixed-integer point of $\mathcal{F}$ that
encodes an SCS satisfies \eqref{eq:stability_bigM} for the pair $(g,s)$.
\item[(ii)] \emph{(Exact separation)} A mixed-integer point of $\mathcal{F}$ with
$s \subseteq c(g)$ and $\sum_{i \in s} v_i < V(s)$ violates
\eqref{eq:stability_bigM}.
\end{itemize}
\end{proposition}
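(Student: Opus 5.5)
The plan is a case split on the gate value $k := |s| - \sum_{i\in s} z_{ig}$. At any mixed-integer point of $\mathcal{F}$ the $z_{ig}$ are binary, so $k$ is an integer in $\{0,1,\dots,|s|\}$. Moreover $k=0$ holds exactly when every member of $s$ is assigned to group $g$, that is, when $s \subseteq c(g)$. Both parts of the proposition reduce to examining this dichotomy.

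For (i), fix a point of $\mathcal{F}$ encoding an SCS and split into two cases.

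\emph{Case $k=0$.} Here $s \subseteq c(g)$ and the inequality \eqref{eq:stability_bigM} reads $\sum_{i\in s} v_i \ge V(s)$. The SCS definition places the restriction of $v$ to $c(g)$ in the Core of the game restricted to $c(g)$, and \eqref{eq:core_def} contains exactly this inequality for every $s \subseteq c(g)$.

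\emph{Case $k\ge 1$.} Since $M_s \ge \max\{V(s),0\} \ge V(s)$, the right-hand side satisfies $V(s) - M_s k \le V(s) - M_s \le 0$. The left-hand side is $\sum_{i\in s} v_i \ge 0$ because $v_i \ge 0$ is imposed in \eqref{eq:OCS_domains}, so the inequality holds.

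The one point to check is that nonnegativity of payoffs is genuinely part of $\mathcal{F}$, and it is. Alternatively, individual rationality $v_i \ge V(\{i\}) \ge 0$ at an SCS gives the same conclusion; this is why the hypothesis $V(\{i\})\ge 0$ appears. For the remark that $M_s = V(s)$ is admissible, I would note that the zero plan is feasible for \eqref{eq:coal_IP_general} with $p^i_j\ge 0$, so $V(s)\ge 0$ and hence $\max\{V(s),0\}=V(s)$.

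For (ii), the hypothesis $s\subseteq c(g)$ gives $z_{ig}=1$ for all $i\in s$, so $k=0$. The right-hand side of \eqref{eq:stability_bigM} then equals $V(s)$, and $\sum_{i\in s} v_i < V(s)$ is precisely a violation.

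The only delicate step is the $k\ge1$ case of (i). It requires the gated right-hand side to be nonpositive for \emph{every} pair $(g,s)$, including pairs where $s$ straddles several groups. This rests on integrality of $z$, which forces $k\ge1$ rather than some fractional $k\in(0,1)$, combined with $M_s\ge V(s)$. I would state this explicitly, since the claim concerns mixed-integer points only and is not claimed for the LP relaxation.
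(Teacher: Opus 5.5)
Your proof is correct and follows the paper's argument essentially line for line. You use the same case split on the integer gate $k = |s| - \sum_{i\in s} z_{ig}$: the Core inequality of the SCS when $k=0$, and $V(s) - M_s k \le V(s) - M_s \le 0 \le \sum_{i\in s} v_i$ when $k \ge 1$. The one difference is minor: you take $\sum_{i\in s} v_i \ge 0$ directly from the domain constraint $v_i \ge 0$ of the OCS formulation, whereas the paper derives it from singleton stability $v_i \ge V(\{i\}) \ge 0$; both are valid, and your route shows the $k \ge 1$ case does not actually need the hypothesis $V(\{i\}) \ge 0$.
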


Exact separation is NP-complete in general (Section~\ref{app:complexity}).
Since the separation oracle computes $V(s)$ exactly, we set $M_s = V(s)$, the
tightest choice in this big-$M$ family when coalition values are nonnegative;
Section~\ref{ss:lifted_cuts} strengthens it further. We call
\eqref{eq:stability_bigM} the \emph{gated stability inequality}, the bracket
being the gate, closed exactly when all of $s$ lands in group $g$. Validity for
every pair lets us impose it for all of them, so define
\begin{equation}
\mathcal{F}^{\mathrm{stab}}
 :=
\bigl\{
  (v,x,y,z,\pi) \in \mathcal{F} :
  \text{\eqref{eq:stability_bigM} holds for all } g \in G
  \text{ and all } s \subseteq N
\bigr\}.
\label{eq:Fstab}
\end{equation}
Let $\mathcal{S} \subseteq \mathbb{R}^N$ be the set of payoff vectors that admit
some stable coalition structure: $v \in \mathcal{S}$ when there is a partition
$CS(N)$ of $N$ such that, for every $c \in CS(N)$, the restriction of $v$ to $c$
lies in the Core of the game restricted to $c$. Note that $\mathcal{S}$ is
defined without reference to \eqref{eq:OCS}.

\begin{theorem}[Exact extended formulation]
\label{thm:CSS_projection}
Under the assumptions of Proposition~\ref{prop:stability_valid}, the
mixed-integer system $(v,x,y,z,\pi) \in \mathcal{F}^{\mathrm{stab}}$ is an exact
extended formulation for SCS payoffs:
\begin{equation}
\operatorname{proj}_v(\mathcal{F}^{\mathrm{stab}}) \;=\; \mathcal{S}.
\label{eq:proj}
\end{equation}
In particular, maximizing $\sum_{i\in N} v_i$ over
$\mathcal{F}^{\mathrm{stab}}$ yields an \emph{optimal stable coalition structure}
(OSCS).
\end{theorem}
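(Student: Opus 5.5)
The proof is a two-sided inclusion, and Proposition~\ref{prop:stability_valid} supplies the two halves.

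For $\operatorname{proj}_v(\mathcal{F}^{\mathrm{stab}}) \subseteq \mathcal{S}$, take a point $(v,x,y,z,\pi) \in \mathcal{F}^{\mathrm{stab}}$. By \eqref{eq:OCS_partition} and the leader constraints, the nonempty sets $c(g)$ form a partition $CS(N)$. Fix a formed coalition $c = c(g)$; two conditions must be checked.

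\emph{Efficiency.} By \eqref{eq:OCS_efficiency} we have $\sum_{i\in c} v_i = \pi_g$, and $\pi_g \le V(c)$ because $(x^{\cdot,g},y^g)$ is a feasible plan for the program \eqref{eq:coal_IP_general} of $c$. For this I verify that the group-indexed constraints, including \eqref{eq:OCS_w}--\eqref{eq:alpha_restriction} with the stated $M_j$, restrict exactly to \eqref{eq:coal_IP_general}. Taking $s = c$ in \eqref{eq:stability_bigM} closes the gate and gives $\sum_{i\in c} v_i \ge V(c)$, so equality holds.

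\emph{Coalitional rationality.} For each $s \subseteq c$ the gate $|s| - \sum_{i\in s} z_{ig}$ is zero, so \eqref{eq:stability_bigM} reads $\sum_{i\in s} v_i \ge V(s)$. Hence $v|_c$ lies in the Core of $c$, and $v \in \mathcal{S}$.

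For $\mathcal{S} \subseteq \operatorname{proj}_v(\mathcal{F}^{\mathrm{stab}})$, take $v \in \mathcal{S}$ with witnessing partition $CS(N)$. I build a lift as follows.
\begin{itemize}
\item Index each $c \in CS(N)$ by its smallest member $g$ and set $z_{ig} = 1$ for $i \in c$, all other $z$ zero. This satisfies \eqref{eq:OCS_sym_leader_range}--\eqref{eq:OCS_sym_leader_nonempty}.
\item For each such $g$, copy an optimal solution of \eqref{eq:coal_IP_general} for $c$ into $(x^{\cdot,g},y^g)$. Set all variables of empty groups to zero, and set $w^g_j = 1$ iff $|I_j \cap c| \ge 2$.
\item Set $\pi_g = V(c)$ and keep the given $v$.
\end{itemize}
Because Core membership includes efficiency, $\sum_{i\in c} v_i = V(c) = \pi_g$, so \eqref{eq:OCS_efficiency} holds. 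Its McCormick linearization holds with $\phi_{ig} = v_i z_{ig}$ once $M \ge v_i$. Since $v_i \ge 0$ and $v_i \le \sum_{k\in c} v_k = V(c) \le \sum_{i}\sum_j p^i_j u^i_j$, the chosen $M$ is large enough. Here $v_i \ge 0$ comes from individual rationality, $v_i \ge V(\{i\}) \ge 0$, together with the standing assumption.

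The resulting point lies in $\mathcal{F}$ and encodes an SCS. Proposition~\ref{prop:stability_valid}(i) then gives \eqref{eq:stability_bigM} for every pair $(g,s)$, so the point lies in $\mathcal{F}^{\mathrm{stab}}$.

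For the pairs with $s \not\subseteq c(g)$ I would re-derive validity directly rather than only cite it. Let $k = |s| - \sum_{i\in s} z_{ig} \ge 1$. Then the right-hand side is at most $V(s) - M_s \le 0$, while $\sum_{i\in s} v_i \ge 0$ because $v \ge 0$.

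The OSCS claim follows at once. Maximizing $\sum_i v_i$ over $\mathcal{F}^{\mathrm{stab}}$ is the same as maximizing it over $\mathcal{S}$, and the optimizer's partition $\{c(g)\}$ witnesses stability.

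The step I expect to be the main obstacle is the first inclusion's efficiency argument, namely that the group-$g$ block is a faithful copy of \eqref{eq:coal_IP_general} for $c(g)$. The delicate parts are the endogenous $w^g_j$ classification and the big-$M$ release in \eqref{eq:alpha_restriction}. Relaxed integrality of $x^{i,g}_j$ on items with $w^g_j = 0$ also has to be handled: $y^g_j = x^{i,g}_j$ for the single member, so it is integral anyway. I would check these item by item, splitting into three cases: $|I_j \cap c| = 0$, $= 1$, and $\ge 2$.
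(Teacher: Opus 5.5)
Your proposal is correct and follows essentially the same route as the paper. The forward inclusion combines $\pi_g \le V(c(g))$ with the closed-gate cuts, using $s = c(g)$ to get the reverse efficiency inequality; the reverse inclusion uses the smallest-member canonical encoding together with Proposition~\ref{prop:stability_valid}(i), and you are more explicit than the paper about the bookkeeping it glosses over: the McCormick requirement $v_i \le M$, the forced values of $w^g_j$, and the redundancy of the big-$M$ $\alpha$-row when $w^g_j = 0$.
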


\begin{proof}{Proof}
($\subseteq$) Let $(v,x,y,z,\pi) \in \mathcal{F}^{\mathrm{stab}}$ be
mixed-integer feasible. The assignment variables define a partition
$CS(N) = \{c(g) : \sum_i z_{ig} > 0\}$. Constraints \eqref{eq:OCS_cap} to
\eqref{eq:OCS_other} and the domains \eqref{eq:OCS_domains} ensure feasibility of
the coalition-level integer programs, with \eqref{eq:alpha_restriction} supplying
the coalition bound \eqref{eq:coal_IP_general_y_ub}, so
\eqref{eq:OCS_efficiency} gives
$\sum_{i\in c(g)} v_i = \pi_g \le V(c(g))$ whenever $c(g)$ is nonempty. By
Proposition~\ref{prop:stability_valid}(ii) the family \eqref{eq:stability_bigM}
implies $\sum_{i\in s} v_i \ge V(s)$ for all $s \subseteq c(g)$. Taking
$s = c(g)$ supplies the reverse inequality, so each formed coalition is
efficient as well as internally stable, and $v \in \mathcal{S}$.

($\supseteq$) Conversely, let $v \in \mathcal{S}$, with coalition structure
$CS(N)$ and coalition decisions $x,y,\pi$ that are efficient and internally
stable. Encode the structure canonically by assigning each coalition the group
index of its smallest member, which respects
\eqref{eq:OCS_sym_leader_range} and \eqref{eq:OCS_sym_leader_nonempty}. The
resulting point satisfies all OCS constraints, including the nonnegativity in
\eqref{eq:OCS_domains}: internal stability applied to the singleton $\{i\}$
gives $v_i \ge V(\{i\}) \ge 0$, the last inequality because the empty plan is
feasible for player $i$. By internal stability with
Proposition~\ref{prop:stability_valid}(i) it satisfies
\eqref{eq:stability_bigM} for every pair, so
$v \in \operatorname{proj}_v(\mathcal{F}^{\mathrm{stab}})$. \Halmos
\end{proof}

Note that $\mathcal{S}$ need not be a single polyhedron in $v$-space; it is in
general a union of Core polytopes corresponding to different partitions of $N$.
The system $\mathcal{F}^{\mathrm{stab}}$ is therefore an extended formulation
for SCS payoffs rather than a direct polyhedral description in payoff space.

\begin{proposition}[Existence of SCS]
\label{prop:ex_css}
Assume $V(c)$ is finite for every $c \subseteq N$. Then an SCS exists: the
singleton partition $CS(N)=\{\{i\}: i\in N\}$ with payoffs
$v_i = V(\{i\})$.
\end{proposition}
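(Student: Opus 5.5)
The plan is to verify directly that the singleton partition with $v_i = V(\{i\})$ satisfies the definition of an SCS given in Section~\ref{s:intro}. That definition asks for two things. First, $CS(N)$ must be a partition of $N$. Second, for each block $c_\ell$, the restriction of $v$ to $c_\ell$ must lie in the Core \eqref{eq:core_def} of the game restricted to $c_\ell$.

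The first requirement is immediate for $\{\{i\}: i\in N\}$. Finiteness of $V$, which the statement assumes, ensures $v \in \mathbb{R}^N$ is well defined.

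For the Core condition I fix a block $c=\{i\}$ and check the two conditions of \eqref{eq:core_def} in turn.

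\emph{Efficiency.} This reads $v_i = V(\{i\})$, which holds by the choice of $v$.

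\emph{Stability.} The subsets of $\{i\}$ are only $\emptyset$ and $\{i\}$. For $s=\{i\}$ the requirement $v_i \ge V(\{i\})$ holds with equality. For $s=\emptyset$ the requirement is $0 \ge V(\emptyset)=0$, which holds by the convention $V(\emptyset)=0$ in Definition~\ref{def:cipg}.

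Hence the restriction of $v$ to each block lies in the corresponding one-player Core, and the pair is an SCS.

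I would add two remarks.

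\emph{Individual rationality.} If one also wants the individual rationality built into the payoff-configuration notion of \citet{koczy2018partition}, it is the singleton stability condition just verified. Nonnegativity $v_i\ge 0$, which is needed if one wants the point to lie in $\mathcal{F}$, follows from $V(\{i\})\ge 0$, since the zero plan is feasible. It is not needed for the SCS definition itself.

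\emph{Membership in the projection.} Via Theorem~\ref{thm:CSS_projection}, this $v$ lies in $\mathcal{S}=\operatorname{proj}_v(\mathcal{F}^{\mathrm{stab}})$. Consequently the OSCS problem over $\mathcal{F}^{\mathrm{stab}}$ is always feasible.

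There is no real obstacle here. The only point requiring care is that the Core must be taken with respect to the restricted game on each block, not on $N$. That is exactly why the weaker SCS notion, unlike the coalition structure core, always admits this trivial witness.
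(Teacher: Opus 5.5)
Your proof is correct and follows the paper's argument. Like the paper, you check for each singleton block that efficiency gives $v_i = V(\{i\})$ and that the only nontrivial Core inequality then holds with equality. Your explicit $s=\emptyset$ check is slightly more complete, and your closing remark on the nonemptiness of $\mathcal{F}^{\mathrm{stab}}$ via Theorem~\ref{thm:CSS_projection} is the same observation the paper makes right after its proof.
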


\begin{proof}{Proof}
For each singleton coalition $\{i\}$, efficiency requires $v_i = V(\{i\})$ and
internal stability requires $v_i \ge V(\{i\})$ for the only nonempty
subcoalition. These are compatible, so the Core of the restricted game
$(\{i\},V_{\{i\}})$ is nonempty. Collecting these singletons yields an SCS.
\Halmos
\end{proof}

By Proposition~\ref{prop:ex_css} the feasible region
$\mathcal{F}^{\mathrm{stab}}$ is nonempty, so by
Theorem~\ref{thm:CSS_projection} solving \eqref{eq:OCS} augmented with
\eqref{eq:stability_bigM} optimizes over SCS-feasible solutions and returns
an OSCS.

\subsection{Lifted stability cuts}
\label{ss:lifted_cuts}

The basic inequality \eqref{eq:stability_bigM} can be strengthened using the
singleton values. For $s \subseteq c(g)$ write
$\Delta(s) := V(s) - \sum_{i \in s} V(\{i\})$ for the \emph{synergy} of $s$, the
value gained from cooperation beyond what its members achieve alone. When
$\Delta(s) \ge 0$ we replace \eqref{eq:stability_bigM} by the \emph{lifted
stability cut}
\begin{equation}
  \sum_{i \in s} v_i
  \;\ge\;
  \Delta(s) \Bigl(\sum_{i \in s} z_{ig} - |s| + 1\Bigr)
  \;+\;
  \sum_{i \in s} V(\{i\})\, z_{ig}.
  \label{eq:lifted_cut}
\end{equation}

\begin{proposition}[Validity of the lifted cut]
\label{prop:lifted_valid}
Let $s \subseteq N$ with $\Delta(s) \ge 0$ and $V(\{i\}) \ge 0$ for all
$i \in s$. Then \eqref{eq:lifted_cut} is valid for every mixed-integer point of
$\mathcal{F}^{\mathrm{stab}}$.
\end{proposition}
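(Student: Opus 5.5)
The plan is a case split on how many members of $s$ are assigned to group $g$. Fix a mixed-integer point $(v,x,y,z,\pi) \in \mathcal{F}^{\mathrm{stab}}$ and set $k := \sum_{i\in s} z_{ig} \in \{0,1,\dots,|s|\}$. The right-hand side of \eqref{eq:lifted_cut} is then
\[
\Delta(s)\,(k-|s|+1) + \sum_{i\in s:\, z_{ig}=1} V(\{i\}).
\]
Two facts from Theorem~\ref{thm:CSS_projection} will be used throughout. First, the point encodes an SCS. Second, every formed coalition satisfies the Core inequalities $\sum_{i\in t} v_i \ge V(t)$ for all $t \subseteq c(g')$ and every $g'$; this comes from Proposition~\ref{prop:stability_valid}(ii) applied inside $\mathcal{F}^{\mathrm{stab}}$. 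In particular, every player has $v_i \ge V(\{i\}) \ge 0$.

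\emph{Case $k = |s|$.} Here $s \subseteq c(g)$, and the right-hand side equals $\Delta(s) + \sum_{i\in s} V(\{i\}) = V(s)$. The cut is then exactly the Core inequality for $s$ inside $c(g)$, which holds by the fact above.

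\emph{Case $k \le |s|-1$.} Now $k-|s|+1 \le 0$. Since $\Delta(s)\ge 0$, the first term is nonpositive, so the right-hand side is at most $\sum_{i\in s} V(\{i\}) z_{ig}$. Using individual rationality and nonnegativity,
\[
\sum_{i\in s} v_i \;\ge\; \sum_{i\in s} V(\{i\}) \;\ge\; \sum_{i\in s} V(\{i\})\, z_{ig},
\]
where the last step uses $V(\{i\}) \ge 0$ and $z_{ig}\in\{0,1\}$. This closes the case, which covers every pair $(g,s)$ with $s \not\subseteq c(g)$, including those where no member of $s$ is in $g$.

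The only delicate point is justifying individual rationality $v_i \ge V(\{i\})$ for every player, whatever group it sits in. I would get this from the singleton stability inequalities \eqref{eq:stability_bigM} with $s=\{i\}$ and $g$ equal to $i$'s own group, since these are part of $\mathcal{F}^{\mathrm{stab}}$ and their gate closes there. Equivalently, it follows from the projection identity \eqref{eq:proj}. Everything else is a sign check, and the hypothesis $\Delta(s) \ge 0$ is exactly what makes the first term nonpositive when the gate is open.
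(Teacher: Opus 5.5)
Your proposal is correct and follows the paper's proof essentially step for step. It uses the same case split on $k=\sum_{i\in s} z_{ig}$, reduces the closed-gate case $k=|s|$ to the Core inequality for $s$, and in the open-gate case $k\le |s|-1$ combines $\Delta(s)\ge 0$, $z_{ig}\le 1$, $V(\{i\})\ge 0$ with the singleton stability cuts in each player's own group, which give $v_i\ge V(\{i\})$.
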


\begin{proposition}[Pointwise dominance]
\label{prop:lifted_dominance}
Under the hypotheses of Proposition~\ref{prop:lifted_valid} and with
$M_s = V(s)$, the right-hand side of \eqref{eq:lifted_cut} is at least that of
\eqref{eq:stability_bigM} at every point of the box $z_{ig} \in [0,1]$,
$i \in s$. Consequently \eqref{eq:lifted_cut} implies
\eqref{eq:stability_bigM} as a constraint on $(v,z)$.
\end{proposition}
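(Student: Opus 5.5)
We compare the two right-hand sides as functions of $z_s := (z_{ig})_{i\in s}$ on the box $[0,1]^{s}$. Write $k := |s|$ and $\sigma := \sum_{i\in s} z_{ig}$. With $M_s = V(s)$, the right-hand side of \eqref{eq:stability_bigM} is
\begin{equation*}
L_0(z_s) = V(s) - V(s)(k-\sigma) = V(s)(\sigma - k + 1).
\end{equation*}
Substituting $V(s) = \Delta(s) + \sum_{i\in s} V(\{i\})$ gives
\begin{equation*}
L_0(z_s) = \Delta(s)(\sigma - k + 1) + \Bigl(\sum_{i\in s} V(\{i\})\Bigr)(\sigma - k + 1).
\end{equation*}
The first term coincides with the first term of \eqref{eq:lifted_cut}. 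So the difference between the lifted right-hand side $L_1$ and $L_0$ reduces to
\begin{equation*}
L_1 - L_0 = \sum_{i\in s} V(\{i\})\, z_{ig} - \Bigl(\sum_{i\in s} V(\{i\})\Bigr)(\sigma - k + 1) = \sum_{i\in s} V(\{i\})\bigl(z_{ig} - \sigma + k - 1\bigr).
\end{equation*}
Notably, $\Delta(s)\ge 0$ is not needed for this comparison; it is needed only for validity (Proposition~\ref{prop:lifted_valid}).

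The key step is to show that each bracket is nonnegative on the box. For fixed $i\in s$ we have
\begin{equation*}
z_{ig} - \sigma + k - 1 = (k-1) - \sum_{h\in s\setminus\{i\}} z_{hg}.
\end{equation*}
Since each $z_{hg}\le 1$, this sum over $k-1$ terms is at most $k-1$, so the bracket is $\ge 0$. Combining this with $V(\{i\})\ge 0$ from the hypotheses gives $L_1 \ge L_0$ pointwise on the box. If $s=\emptyset$, both sides are zero and the claim is trivial.

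For the consequence, take any $(v,z)$ with $z_{ig}\in[0,1]$ for $i\in s$ that satisfies \eqref{eq:lifted_cut}. Then $\sum_{i\in s} v_i \ge L_1(z_s) \ge L_0(z_s)$, which is \eqref{eq:stability_bigM}. The box constraint on $z$ is implied by the domains (or their LP relaxation) in every formulation, so the implication holds as a constraint on $(v,z)$, including on the relaxation.

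The main obstacle is only bookkeeping: arranging the algebra so that the singleton-value terms separate cleanly. Using the decomposition of $V(s)$ through $\Delta(s)$ makes the bound $z_{hg}\le 1$ the only inequality required.
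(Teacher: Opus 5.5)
Your proof is correct and matches the paper's argument essentially line for line. Both rewrite the basic right-hand side as $V(s)\bigl(\sum_{i\in s} z_{ig} - |s| + 1\bigr)$ and substitute $V(s) = \Delta(s) + \sum_{i\in s} V(\{i\})$ so the $\Delta$-terms cancel; each bracket $(|s|-1) - \sum_{\ell\in s\setminus\{i\}} z_{\ell g}$ is then nonnegative by $z_{\ell g}\le 1$, and $V(\{i\})\ge 0$ finishes the bound (your remark that $\Delta(s)\ge 0$ is needed only for validity agrees with the paper's proof).
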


\begin{remark}[Dominance and limits of lifted cuts]
\label{rem:lifted_validity}
When all players of $s$ belong to group $g$, the lifted cut reduces to the Core
inequality $\sum_{i\in s} v_i \ge V(s)$, matching \eqref{eq:stability_bigM}. Elsewhere
in the box, Proposition~\ref{prop:lifted_dominance} shows
\eqref{eq:lifted_cut} dominates \eqref{eq:stability_bigM} pointwise, so the
lifted family is at least as strong.

This dominance should not be read as a tightening of the root LP relaxation.
Inequalities of the payoff-stability form $\sum_{i\in s} v_i \ge f_s(z)$ bound
the payoff variables from below while the objective of \eqref{eq:OCS} maximizes
total payoff, which is why one should not expect them to move the root bound;
we confirm computationally in Section~\ref{s:numerical} that they do not. Any
benefit of \eqref{eq:lifted_cut} over \eqref{eq:stability_bigM} is realized
inside branch and bound, where the dominated inequality can be discarded.
Validity requires $\Delta(s) \ge 0$ and $V(\{i\}) \ge 0$; when either fails, our
implementation falls back to \eqref{eq:stability_bigM}. By
Proposition~\ref{prop:superadd_natural} the fallback never triggers under
full pooling.
\end{remark}

Nothing in Sections~\ref{ss:stability_CSS} and \ref{ss:lifted_cuts} used the
knapsack form of \eqref{eq:coal_IP_general}: those inequalities reference the
instance only through the values $V(s)$ and $V(\{i\})$. The separation of
Section~\ref{sec:separation} does use the rows, and requires the subset
indicators to enter only through right-hand sides and variable bounds, with the
player variables bounded. Section~\ref{ss:general_milp} of the e-companion
states the general MILP setting and checks each component.

\section{Computing an OSCS}\label{sec:computation}

The formulation of Section~\ref{s:formulation} has exponentially many stability
inequalities, one for every pair of a group and a subset of players. They cannot all be stated up front,
so they must be \emph{separated}: generated on demand at candidate solutions.
Our methods combine choices along three axes, taken in turn below: the
separation oracle (Section~\ref{sec:separation}), when the model is re-solved
(Section~\ref{sec:lazy}), and the warm start (Section~\ref{sec:heuristics}).
Table~\ref{tab:method_overview} lists the four resulting configurations, each
run under both the AGG and DIS formulations of
Section~\ref{ss:OCS_formulation}.

\bgroup
\renewcommand{\arraystretch}{0.95}\SingleSpacedXI
\begin{table}[tbp]
\caption{The four OSCS configurations tested in Section~\ref{s:comp_results}.}
\label{tab:method_overview}
\centering
\begin{tabular}{@{}llll@{}}
\toprule
Configuration & Cut management & Separation for $|c| > \tau$ & SCS-feasible warm start \\
\midrule
cutplane & re-solve the model & enumeration (all violated) & --- \\
lazy & branch and cut (lazy) & enumeration (first violated) & --- \\
lazy\_MIP & branch and cut (lazy) & separation MIP (most violated) & --- \\
lazy\_MIP\_ws & branch and cut (lazy) & separation MIP (most violated) & Algorithm~\ref{alg:heuristic} \\
\bottomrule
\end{tabular}
\par\vspace{4pt}
\parbox{0.92\linewidth}{\footnotesize \emph{Note.} For coalitions with
$|c| \le \tau$, every configuration enumerates all subsets and adds all
violated cuts; the column ``Separation for $|c| > \tau$'' is where the
configurations differ. Each configuration is tested under both the AGG and
DIS formulations.}
\end{table}
\egroup
\subsection{Separation strategies}\label{sec:separation}

Given an incumbent $(\bar v, \bar z)$ with realized coalitions
$c(g) = \{i \in N : \bar z_{ig} = 1\}$, the separation oracle must find a subset
$s \subseteq c(g)$ with $\sum_{i \in s} \bar v_i < V(s)$, or prove that none
exists. We use two oracles, combined through a cardinality threshold $\tau$.

\paragraph{Enumeration.}
For $|c(g)| \le \tau$ we enumerate all $2^{|c(g)|}-2$ proper nonempty subsets
and test each. This is exponential in the coalition size but exact. We use
$\tau = 9$ throughout, and the two considerations pull against each other: a
larger threshold covers more of the low-cardinality cuts that stay active
through the search, for reasons taken up at the end of this section, while each increment of $\tau$
doubles the number of subsets to scan. At $\tau = 9$ the $2^9 - 2 = 510$ proper subsets
of a coalition are cheap to check against cached coalition values, and past that
point the cost outruns the extra coverage.

\paragraph{Optimization-based separation.}
For a group $g$, separation seeks the most violated subcoalition,
\begin{equation}
\max_{s \subseteq c(g)} \Bigl\{\, V(s) - \sum_{i \in s} \bar v_i \,\Bigr\}.
\label{eq:sep_problem}
\end{equation}
Written through the value function this looks like one coalition program per
subset. It is not: choosing the subset and solving its coalition program can be
done in a single model. Introduce binaries $\sigma_i \in \{0,1\}$ for
$i \in c(g)$, with $\sigma_i = 1$ exactly when $i \in s$, and let them drive the
right-hand sides and variable bounds of \eqref{eq:coal_IP_general}. Because
$\sigma$ never touches the objective of the coalition program, the selection and
the production decisions can be optimized together.

Both oracles enumerate below the threshold and differ only above it. The
configuration that keeps enumerating, in order of increasing cardinality, and
stops at the first violated subset we call \emph{lazy}; the one that solves
\eqref{eq:sep_single_level} for the most violated subset we call
\emph{lazy\_MIP}.
Each adds a single cut per group, so lazy trades more rounds for cheaper
callbacks.

Fix $c(g)$ and let $R^{c(g)}_{\mathrm{ind}}$ and $R^{c(g)}_{\mathrm{com}}$ be
the sets of \eqref{eq:item_sets} evaluated at $c = c(g)$, and let
$R_i := \{ j \in R : i \in I_j \}$ be the items available to player $i$. The
decision variables are the subset indicators $\sigma_i$, the player-level usage
$x^i_j$, and the coalition-level usage $y_j$ of common items. Mirroring
\eqref{eq:coal_IP_general}, integrality falls on $x^i_j$ for individual items
and on $y_j$ for common ones, while $x^i_j$ for common items stays continuous:
\begin{subequations}\label{eq:sep_single_level}
\begin{align}
\max \quad
& \sum_{i \in c(g)} \sum_{j \in R_i} p^i_j x^i_j
  \;-\;
  \sum_{i \in c(g)} \bar v_i \sigma_i,
\label{eq:sep_obj}\\
\text{s.t.} \quad
& \sum_{j \in R_{\mathrm{ind}}^{c(g)}}
    \sum_{i \in I_j \cap c(g)} a_j x^i_j
  \;+\;
  \sum_{j \in R_{\mathrm{com}}^{c(g)}} a_j y_j
  \;\le\;
  \sum_{i \in c(g)} b^i \sigma_i,
  \label{eq:sep_cap}\\
& y_j
  \;=\;
  \sum_{i \in I_j \cap c(g)} x^i_j,
  \quad \forall j \in R_{\mathrm{com}}^{c(g)},
  \label{eq:sep_link}\\
& B^i x^i \;\le\; d^i \sigma_i,
  \quad \forall i \in c(g),
  \label{eq:sep_private}\\
& 0 \;\le\; x^i_j \;\le\; u^i_j \sigma_i,
  \quad \forall i \in c(g),\ \forall j \in R_i,
  \label{eq:sep_x_ub}\\
& y_j \;\le\; \sum_{i \in I_j \cap c(g)} u^i_j \sigma_i,
  \quad \forall j \in R_{\mathrm{com}}^{c(g)},
  \label{eq:sep_y_ub}\\
& x^i_j \in \mathbb{Z}_+, \ \forall i \in c(g),\ j \in R_{\mathrm{ind}}^{c(g)} \cap R_i; \,\,
  y_j \in \mathbb{Z}_+, \ \forall j \in R_{\mathrm{com}}^{c(g)}; \,\,
  \sigma_i \in \{0,1\},\ \forall i \in c(g),
  \label{eq:sep_int}\\
\blockhead{needed only when $\alpha < 1$; omitted under full pooling:}
& 2\kappa_j \;\le\; \sum_{i \in I_j \cap c(g)} \sigma_i
  \;\le\; \bigl(|I_j \cap c(g)|-1\bigr)\,\kappa_j + 1,
  \quad \forall j \in R_{\mathrm{com}}^{c(g)},
  \label{eq:sep_kappa}\\
& y_j \;\le\; \alpha \!\!\sum_{i \in I_j \cap c(g)}\!\! u^i_j \sigma_i
      + M_j (1-\kappa_j),
  \quad \forall j \in R_{\mathrm{com}}^{c(g)},
  \label{eq:sep_alpha}\\
& \kappa_j \in \{0,1\},
  \quad \forall j \in R_{\mathrm{com}}^{c(g)}.
  \label{eq:sep_kappa_dom}
\end{align}
\end{subequations}
The separation MIP must evaluate $V(s)$ under the \emph{same} coalition policy as
the formulation, and the common and individual designation depends on the
selected subset itself: an item $j \in R_{\mathrm{com}}^{c(g)}$ is common in $s$
only when two or more of its users are selected. That is what the indicator
$\kappa_j$ records in \eqref{eq:sep_kappa} and \eqref{eq:sep_alpha} then
enforces; $\kappa_j$ is the separation counterpart of $w^g_j$, and the two pairs
of constraints mirror \eqref{eq:OCS_w} and \eqref{eq:alpha_restriction}. Under full pooling \eqref{eq:sep_alpha} is implied by \eqref{eq:sep_y_ub}, so
nothing reads $\kappa_j$ and both \eqref{eq:sep_kappa} and \eqref{eq:sep_alpha}
are dropped, as in the formulation.

We keep $M_j = \sum_{i \in I_j \cap c(g)} u^i_j$ here and the analogous constant
in \eqref{eq:alpha_restriction}, although $(1-\alpha)\max_{i} u^i_j$ is also
valid and considerably smaller. Both choices leave the mixed-integer feasible set
unchanged, so every optimum we report is unaffected; only the relaxation differs.
The reported times are therefore conservative, since the tighter constant can
only strengthen the bound.

\begin{proposition}[Single-level separation]
\label{prop:sep_equiv}
Fix a group $g$ and an incumbent $(\bar v, \bar z)$, and assume the player
variables are bounded above by $u$. Then \eqref{eq:sep_single_level} and the separation problem
\eqref{eq:sep_problem} have the same optimal value, and every optimal $\sigma^*$ of
\eqref{eq:sep_single_level} yields a maximizer
$s^* = \{i : \sigma^*_i = 1\}$ of \eqref{eq:sep_problem}. Hence the optimal
value is nonpositive exactly when no subset of $c(g)$ violates stability at
$\bar v$.
\end{proposition}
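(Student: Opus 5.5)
The plan is to prove two inequalities between the optimal values and then read off the maximizer and sign statements. Fix $g$ and write $c := c(g)$. For each $\sigma \in \{0,1\}^{c}$ let $s(\sigma) := \{i : \sigma_i = 1\}$, and let $P(\sigma)$ denote the feasible set of \eqref{eq:sep_single_level} with $\sigma$ fixed. The core claim is that projecting $P(\sigma)$ onto the variables of members of $s(\sigma)$ gives exactly the feasible set of the coalition program \eqref{eq:coal_IP_general} for $c = s(\sigma)$. Given that claim, the objective \eqref{eq:sep_obj} on $P(\sigma)$ has maximum $V(s(\sigma)) - \sum_{i \in s(\sigma)} \bar v_i$. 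Maximizing over $\sigma$ then gives equality with \eqref{eq:sep_problem}.

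First, the nonselected players are switched off. If $\sigma_i = 0$, then \eqref{eq:sep_x_ub} forces $x^i_j = 0$ for all $j \in R_i$. Their private rows \eqref{eq:sep_private} then read $B^i \cdot 0 \le 0$, which holds. Their budget contribution in \eqref{eq:sep_cap} vanishes, as does their profit in \eqref{eq:sep_obj} and their share of every sum in \eqref{eq:sep_link}, \eqref{eq:sep_y_ub} and \eqref{eq:sep_alpha}. The objective penalty is exactly $\sum_{i \in s} \bar v_i$. What remains is \eqref{eq:coal_IP_general} written for $s$ with budget $\sum_{i\in s} b^i$, except in how items are classified. The classification in \eqref{eq:sep_single_level} is the one for $c$, not for $s$.

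The classification mismatch is the main obstacle, and I would handle it item by item.

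\emph{Items individual in $c$.} Such an item is still individual in $s$, or unavailable there, and it carries integrality on $x^i_j$ in both programs.

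\emph{Items common in $c$ but with at most one selected user.} Here $|I_j \cap s| \le 1$, so the item is individual in $s$ (or absent), whereas \eqref{eq:sep_single_level} treats it through $y_j$. By \eqref{eq:sep_link}, $y_j$ equals that single player's $x^i_j$, so integrality of $y_j$ is integrality of $x^i_j$. The bound \eqref{eq:sep_y_ub} is implied by \eqref{eq:sep_x_ub}. The upper inequality in \eqref{eq:sep_kappa} allows $\kappa_j = 0$, and the lower inequality $2\kappa_j \le \sum_{i \in I_j \cap c} \sigma_i \le 1$ forces it. With $\kappa_j = 0$, \eqref{eq:sep_alpha} is slack, because $M_j = \sum_{i \in I_j \cap c} u^i_j$ dominates $y_j \le u^i_j$.

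\emph{Items common in $s$.} Here at least two users are selected, so \eqref{eq:sep_kappa} forces $\kappa_j = 1$. Then \eqref{eq:sep_alpha} becomes exactly \eqref{eq:coal_IP_general_y_ub} for $s$. The integer $y_j$ and the continuous $x^i_j$ match \eqref{eq:coal_IP_general_domains}, which by the remark after \eqref{eq:link_y} does not change $V(s)$. Under full pooling the $\kappa$-rows are dropped, and \eqref{eq:sep_y_ub} plays the role of \eqref{eq:coal_IP_general_y_ub}.

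Together these cases give a bijection between feasible points of $P(\sigma)$ and feasible points of the coalition program for $s(\sigma)$, padded with zeros, and the bijection preserves objective values up to the constant $\sum_{i \in s} \bar v_i$. Every program is bounded (assumption of Definition~\ref{def:cipg}, with $u$ bounding the player variables), so each maximum over $P(\sigma)$ is attained. There are finitely many $\sigma$, so \eqref{eq:sep_single_level} attains its optimum, and it equals $\max_{s \subseteq c}\{V(s) - \sum_{i\in s}\bar v_i\}$.

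If $\sigma^*$ is optimal, its value equals $V(s^*) - \sum_{i\in s^*}\bar v_i$, which is therefore the maximum in \eqref{eq:sep_problem}, so $s^*$ is a maximizer. The empty set contributes value $0$, since $V(\emptyset)=0$. Hence the optimum is always at least $0$, and it is nonpositive, that is zero, exactly when $V(s) \le \sum_{i\in s}\bar v_i$ for every $s \subseteq c$. In other words, no subset violates stability at $\bar v$.
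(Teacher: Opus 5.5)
Your proof is correct and follows the paper's argument. You fix $\sigma$, show that the inner maximization returns $V(s)-\sum_{i\in s}\bar v_i$, maximize over $\sigma$, and use $s=\emptyset$ for the sign claim. Your item-by-item treatment of the common/individual mismatch spells out what the paper's proof compresses into a single sentence: $\kappa_j$ is forced by the two sides of \eqref{eq:sep_kappa}, and when an item common in $c(g)$ has only one selected user, integrality of $y_j$ passes to that user's $x^i_j$.
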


\begin{proof}{Proof}
Fix $\sigma \in \{0,1\}^{c(g)}$ and let $s = \{i : \sigma_i = 1\}$. Since $u$ is
finite, \eqref{eq:sep_x_ub} forces $x^i = 0$ for every $i \notin s$, and
\eqref{eq:sep_private} then reads $B^i x^i \le 0$, which $x^i = 0$ satisfies;
for $i \in s$ both revert to the original bounds. Constraint
\eqref{eq:sep_cap} pools exactly the budgets of $s$, and
\eqref{eq:sep_kappa} and \eqref{eq:sep_alpha} make the common and individual
designation the one induced by $s$. The remaining constraints are those of
\eqref{eq:coal_IP_general} for coalition $s$, so maximizing
\eqref{eq:sep_obj} over $(x,y)$ at this fixed $\sigma$ gives
$V(s) - \sum_{i \in s} \bar v_i$. Maximizing over $\sigma$ as well therefore
returns $\max_{s \subseteq c(g)} \{V(s) - \sum_{i \in s} \bar v_i\}$, which is
\eqref{eq:sep_problem}; the maximizers correspond. The last claim follows
because $s = \emptyset$ is feasible with objective zero. \Halmos
\end{proof}

The second part is what justifies accepting an incumbent: a nonpositive optimum
is a \emph{certificate} of internal stability, not a failure to find a cut. Its
hypotheses are also exactly the conditions under which the framework extends
beyond the resource allocation game (Section~\ref{ss:general_milp} of the
e-companion). One may additionally impose $\sum_{i \in c(g)} \sigma_i \ge 1$, since
$s = \emptyset$ is feasible with objective zero and yields no cut. The full
subset $s = c(g)$ must \emph{not} be excluded. Its inequality is the efficiency
requirement $\sum_{i \in c(g)} v_i \ge V(c(g))$, and
\eqref{eq:OCS_efficiency} alone does not enforce it at a mixed-integer feasible
point: it gives only $\sum_{i \in c(g)} v_i = \pi_g \le V(c(g))$, since the
group's production need not be optimal there. Separating $s = c(g)$ is what makes
every accepted incumbent efficient, hence SCS-feasible. Our implementation omits
it: the objective maximizes total payoff, which \eqref{eq:OCS_efficiency} ties to
production, so efficiency holds at any optimum; time-limit incumbents are
verified directly instead.%

\paragraph{Strength of the most violated cut.}
The cut built from the maximizer $s^*$ is usually not the strongest cut for the
rest of the search. A stability cut for $s$ is non-vacuous only when its gate is
closed, that is when all of $s$ is assigned to one group, and the number of
coalition structures in which that happens falls rapidly with $|s|$. Since the
violation tends to grow with cardinality, the most violated subset is often
large, and its cut rarely binds again later in the tree, whereas
low-cardinality cuts such as $v_i \ge V(\{i\})$ are weaker at the incumbent but
activate in far more configurations. This inverts classical cutting-plane
practice, where a most violated cover inequality, strengthened by lifting, is
valid for the whole polytope and can bind anywhere in the tree
\citep{crowder1983solving,gu1998lifted}. Hence the hybrid: enumeration below
$\tau$, where low-cardinality cuts keep paying, and the separation MIP above it,
where only the certificate is available.

\begin{algorithm}[htbp]
\caption{\textsc{Separate}$(\bar v, \bar z)$, run by the solver at an
integer-feasible point}
\label{alg:separate}
\begin{algorithmic}[1]
\small
\REQUIRE incumbent $(\bar v, \bar z)$; enumeration threshold $\tau$ (we use
  $\tau = 9$)
\ENSURE lazy stability cuts for every violated subcoalition found
\FOR{each nonempty realized coalition $c(g)$}
  \IF{$|c(g)| \le \tau$}
    \STATE for every violated proper nonempty $s \subsetneq c(g)$, add
      \eqref{eq:lifted_cut} as a lazy cut, or \eqref{eq:stability_bigM} where
      the lifted cut is invalid \COMMENT{up to $2^\tau - 2$ cuts}
  \ELSE
    \STATE \label{line:sepmip} solve \eqref{eq:sep_single_level}, built from
      $c(g)$ and $\bar v$; if its value is positive, add that same cut at its
      maximizer $s^*$ \COMMENT{one cut}
  \ENDIF
\ENDFOR
\end{algorithmic}
\end{algorithm}

\subsection{Iterative re-solving versus lazy constraints}
\label{sec:lazy}

The baseline is the classical iterative cutting-plane loop, which we call
\emph{cutplane}: solve \eqref{eq:OCS} to optimality, enumerate the
subcoalitions of every formed group, add all violated stability cuts, and
re-solve, stopping when no violation remains, at which point the incumbent is a
provably stable optimum. Its drawback is equally classical: every round
discards the branch-and-bound tree, and separation happens only at optimal
solutions.

Our remaining configurations embed separation in the branch-and-bound tree using
lazy constraints. Whenever an integer-feasible solution $(\bar v,\bar z)$ is
found, at a \texttt{MIPSOL} callback, we extract the implied coalition structure
$CS(N)=\{c(g): \bar z_{ig}=1\}$, run Algorithm~\ref{alg:separate} for each $c(g)$, and
add any violated stability inequalities through Gurobi's lazy-constraint
callback \texttt{cbLazy} \citep{gurobi}. For each violated subset $s$ we add the
lifted cut \eqref{eq:lifted_cut} when the synergy condition $\Delta(s) \ge 0$
holds and the basic cut \eqref{eq:stability_bigM} otherwise; since the lifted
cut dominates the basic one wherever it is valid, this is our default
throughout. Every accepted incumbent then satisfies the separated
inequalities, and the search
optimizes over stable coalition structures directly.

\subsection{An SCS-feasible primal heuristic}\label{sec:heuristics}

The warm start we supply to lazy\_MIP is \emph{provably} SCS-feasible: every
coalition it returns carries a verified nonempty Core before the partition is
assembled. Algorithm~\ref{alg:heuristic} states the procedure, with phase
budgets $T_1$ and $T_2$ reported in Section~\ref{s:comp_results}.

Two features are worth pointing out. First, the pool $\mathcal{P}$ collects
\emph{every} coalition that passes a Core check, not only the merges that are
executed, so one pass leaves Phase~2 with far more material than the greedy
merge path alone. Second, the Core check splits at the same threshold $\tau$
used for separation. For $|c| \le \tau$ it enumerates every proper nonempty
subcoalition and tests \eqref{eq:core_def} directly. For $|c| > \tau$ it
generates rows instead: solve the Core LP over the subcoalitions found so far,
call \eqref{eq:sep_single_level} at the resulting allocation, and add the most
violated subcoalition, stopping when separation finds no violation, which
certifies a nonempty Core, or when the LP goes infeasible, which certifies an
empty one. If the budget expires before either, the check returns
\emph{inconclusive} and the merging loop treats it as a failure, so a coalition
enters $\mathcal{P}$ only with a proof.

Phase~2 selects from the pool by solving the set partitioning problem
\begin{equation}
\max \ \sum_{c \in \mathcal{P}} V(c)\, \chi_c
\quad \text{s.t.} \quad
\sum_{c \in \mathcal{P}:\, i \in c} \chi_c = 1 \ \ \forall i \in N,
\qquad \chi_c \in \{0,1\},
\label{eq:setpart}
\end{equation}
which is always feasible because $\mathcal{P}$ contains the singletons.

\begin{algorithm}[htbp]
\caption{SCS-feasible warm start}
\label{alg:heuristic}
\begin{algorithmic}[1]
\small
\REQUIRE player set $N$, threshold $\tau$, phase budgets $T_1$ and $T_2$
\ENSURE coalition structure $CS^*$ and payoffs $\tilde v$ encoding an SCS
\STATE evaluate $V(\{i\})$ for every $i \in N$
\STATE $CS \gets \{\{i\} : i \in N\}$; \quad
       $\mathcal{P} \gets \{\{i\} : i \in N\}$; \quad
       $(CS^*,\tilde v) \gets \bigl(CS, (V(\{i\}))_{i \in N}\bigr)$
       \COMMENT{incumbent, already an SCS}
\vspace{2pt}\hrule\vspace{2pt}
\STATE \textbf{Phase 1: Core-verified merging}, within budget $T_1$
\WHILE{$|CS| \ge 2$ and $T_1$ is not exhausted}
  \STATE $\mathcal{C} \gets \{(c_1,c_2) \subseteq CS :
         V(c_1 \cup c_2) - V(c_1) - V(c_2) > 0\}$;
         \textbf{if} $\mathcal{C} = \emptyset$ \textbf{then break}
  \FOR{$(c_1,c_2) \in \mathcal{C}$ in order of decreasing gain}
    \STATE \textbf{if} \textsc{CoreNonempty}$(c_1 \cup c_2)$ returns
           \textbf{true then} $\mathcal{P} \gets \mathcal{P} \cup \{c_1 \cup c_2\}$
           \COMMENT{kept whether or not the merge is taken}
  \ENDFOR
  \STATE \textbf{if} no candidate passed \textbf{then break}
  \STATE $(c_1^*,c_2^*) \gets$ passing pair of largest gain; \quad
         $CS \gets (CS \setminus \{c_1^*,c_2^*\}) \cup \{c_1^* \cup c_2^*\}$
\ENDWHILE
\vspace{2pt}\hrule\vspace{2pt}
\STATE \textbf{Phase 2: set partitioning}, within budget $T_2$
\STATE $CS' \gets$ best solution of \eqref{eq:setpart} over $\mathcal{P}$ found
       within $T_2$, seeded with the singleton partition
\STATE \textbf{if} a Core allocation $\tilde v'$ is obtained for every
       $c \in CS'$ \textbf{then} $(CS^*,\tilde v) \gets (CS',\tilde v')$
       \COMMENT{incumbent replaced only when complete}
\RETURN $(CS^*, \tilde v)$
\end{algorithmic}
\end{algorithm}

\begin{proposition}[Warm start: feasibility and cost]
\label{prop:heuristic_feasible}
Algorithm~\ref{alg:heuristic} evaluates the $|N|$ singleton values during
initialization. From the end of initialization onward it maintains a pair
$(CS^*,\tilde v)$ encoding an SCS, and returns that pair whenever it stops, for
any budgets $T_1, T_2 \ge 0$. Beyond initialization it performs at most
$\binom{|N|+1}{3}$ Core checks, and each check on a coalition of size at most
$\tau$ evaluates at most $2^\tau - 2$ coalition values.
\end{proposition}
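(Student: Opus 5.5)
The proof splits into three claims: the singleton evaluations happen at initialization, the pair $(CS^*,\tilde v)$ is an SCS at every moment after initialization, and the number and cost of Core checks are bounded. The first claim is read off line~1.

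\textbf{SCS invariant.} I would argue by induction over the lines of Algorithm~\ref{alg:heuristic}. After line~2 the pair is the singleton partition with $\tilde v_i = V(\{i\})$. This is an SCS by Proposition~\ref{prop:ex_css}, which applies because $V$ is finite under Definition~\ref{def:cipg}. Phase~1 only changes $CS$ and $\mathcal{P}$ and never touches $(CS^*,\tilde v)$, so the invariant survives it trivially. In Phase~2 the pair is overwritten at exactly one place, the last conditional. That line fires only if a Core allocation has actually been obtained for every block of $CS'$. Concatenating these allocations block by block gives a vector whose restriction to each $c\in CS'$ lies in the Core of $c$, which is precisely the definition of an SCS. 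Termination can happen three ways: by exhausting $T_1$ or $T_2$, by one of the \textbf{break} statements, or at line~\textsc{return}. In every case the returned pair is the current $(CS^*,\tilde v)$, so the invariant gives the claim for all budgets $T_1,T_2\ge0$, including zero.

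\textbf{Counting Core checks.} This is the step I expect to be the main obstacle, since the bound $\binom{|N|+1}{3}$ has to come from a careful count. Each iteration of the while loop either breaks or executes a merge. A merge reduces $|CS|$ by one, so there are at most $|N|-1$ merging iterations, and at most one further iteration that breaks. When $|CS| = k$, the candidate set $\mathcal{C}$ has at most $\binom{k}{2}$ pairs, and each pair triggers at most one call to \textsc{CoreNonempty}. The loop runs only while $|CS|\ge 2$, so over a run where $|CS|$ takes the successive values $|N|, |N|-1, \dots, 2$, the total number of checks is at most
\[
\sum_{k=2}^{|N|}\binom{k}{2}=\binom{|N|+1}{3},
\]
by the hockey-stick identity. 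The delicate point is the final breaking iteration. It runs at some value of $k$ that was already counted, and it either has $\mathcal{C}=\emptyset$ or performs its checks and then breaks without merging. Either way the iteration at that $k$ is the only one with $|CS|=k$, so each $k$ is still charged once. Phase~2 must also be examined: its Core allocations for members of $\mathcal{P}$ come from checks already certified in Phase~1, or from singletons, which need no check. I would state explicitly that obtaining these allocations is not counted as a new Core check; if the paper intends otherwise, the argument would need to note that recovering them reuses the certified LP solution.

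\textbf{Cost per check.} For $|c|\le\tau$, the \textsc{CoreNonempty} routine enumerates the $2^{|c|}-2$ proper nonempty subcoalitions and evaluates their values. This is at most $2^\tau-2$ values, since $V(c)$ itself is already known from computing the gain. The singleton values are cached from initialization, so they only lower the actual count.
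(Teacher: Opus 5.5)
Your proposal is correct and follows the paper's proof essentially step for step. It uses the same pieces: the singleton partition as the initial SCS via Proposition~\ref{prop:ex_css}, the observation that $(CS^*,\tilde v)$ is overwritten only in the final guarded step, and the count $\sum_{k=2}^{|N|}\binom{k}{2}=\binom{|N|+1}{3}$ over the strictly decreasing values of $|CS|$, with Phase~2 not counted, exactly as in the paper. The one detail the paper states and you leave implicit is that $CS'$ is always a genuine partition of $N$: $\mathcal{P}$ contains the singletons and the set-partitioning solve is seeded with them, so a solution exists even when $T_2=0$.
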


The initialization cost cannot be avoided: Core membership constrains the
singleton subcoalitions of every formed coalition, so no output can be certified
without $V(\{i\})$ for each $i$. Past that point the heuristic is \emph{anytime},
and the budgets control how good the partition is rather than whether one is
returned. The cost bound makes $O(|N|^3)$ Core checks; a check on a coalition
of size at most $\tau$ evaluates at most $2^\tau - 2$ coalition values, and
larger checks use time-budgeted row generation. This stands against the
$2^{|N|}-1$ values the exhaustive baseline of Section~\ref{ss:res_cost} needs
before it can begin. It
is loose in practice: at $|N| = 30$ it permits $4{,}495$ checks and the runs use
210 at $\alpha = 0.5$ and 774 at $\alpha = 1.0$.

\paragraph{Limits of the guarantee.}
If no positive-gain pair
passes a Core check in the first round the heuristic returns the singleton
partition, worth $\sum_i V(\{i\})$; we claim no approximation ratio, and none is
available without further assumptions on $V$. The guarantee is conservative by
construction, since an inconclusive Core check counts as a failure. It is also
only as sharp as the LP tolerance: the near-degenerate Cores of
Table~\ref{tab:refinement_summary} sit close to the boundary at which that
tolerance decides the answer.

\subsection{Per-coalition solution concepts and payoff refinement}
\label{ss:coalition_refinement}

The system $(v,x,y,z,\pi) \in \mathcal{F}^{\mathrm{stab}}$ fixes the coalition
structure but not the payoffs within a coalition: by construction of SCS the
restricted game $(c,V_c)$ with $V_c(s) := V(s)$ has a nonempty Core, and any of
its Core allocations may replace the one the solver returns without changing the
structure or the total welfare. We therefore report, for every formed coalition,
the Shapley value \citep{shapley1953value}; the nucleolus
\citep{schmeidler1969nucleolus}, which lies in the Core whenever it is nonempty
and hence for every formed coalition, computed by the standard
sequential-LP scheme \citep{maschler1979geometric}; and the least-core value
$\varepsilon(c)$, whose sign certifies Core nonemptiness and whose magnitude
measures the slack of the tightest Core constraint. Formed coalitions are small
enough that this is cheap. Definitions are in Section~\ref{app:details}, the
computation in Section~\ref{app:addl}, and the results in
Section~\ref{s:numerical}.

\subsection{Algorithm summary}
\label{sec:algsummary}

Algorithm~\ref{alg:ocss_lazy} assembles the pieces into lazy\_MIP\_ws. Its outer
loop is ordinary branch and cut, and we state it only so that the order of
operations is in one place; the content is in
Algorithms~\ref{alg:separate} and \ref{alg:heuristic}. The formulation is an input,
not a fixed choice: nothing in the separation or the warm start depends on how
the payoffs are linearized, and Section~\ref{ss:res_formulation} measures the
difference between AGG and DIS. One step is easy to get wrong. Line~2 must relabel $CS^*$ canonically, because the symmetry-breaking
constraints
\eqref{eq:OCS_sym_leader_range}--\eqref{eq:OCS_sym_leader_nonempty} admit exactly
one group labeling of each partition, so a warm start that labels coalitions any
other way is infeasible and the solver discards it, losing the whole benefit of
Algorithm~\ref{alg:heuristic}. The remaining configurations of
Table~\ref{tab:method_overview} drop the warm start, giving lazy\_MIP, or replace
the MIP separation in line~\ref{line:sepmip} of Algorithm~\ref{alg:separate} with
first-violation enumeration.

\begin{algorithm}[htbp]
\caption{OSCS via lazy constraints with an SCS-feasible warm start
(lazy\_MIP\_ws)}
\label{alg:ocss_lazy}
\begin{algorithmic}[1]
\small
\REQUIRE CIPG instance with pooling parameter $\alpha$; an
  $\mathrm{OCS}_\alpha$ formulation, $\mathrm{AGG}_\alpha$ or
  $\mathrm{DIS}_\alpha$; threshold $\tau$; time limit $T$; phase budgets $T_1$
  and $T_2$
\ENSURE an OSCS, or the best SCS-feasible incumbent found
\STATE $(CS^*, \tilde v) \gets$ Algorithm~\ref{alg:heuristic} with budgets
  $T_1, T_2$
\STATE encode $CS^*$ as $\tilde z$ canonically, giving each coalition the group
  index of its smallest member, as the symmetry-breaking scheme of
  Section~\ref{ss:OCS_formulation} requires
\STATE warm start the formulation at $(\tilde v, \tilde z)$ and solve it with the
  time remaining, separating stability cuts with Algorithm~\ref{alg:separate} at
  every integer-feasible incumbent (\texttt{MIPSOL})
\RETURN best incumbent, an OSCS if optimality is certified
\end{algorithmic}
\end{algorithm}

\section{Experimental design and results}
\label{s:comp_results}\label{s:numerical}

We instantiate the framework as a \emph{Cooperative Knapsack Game} (CKG): each
player solves an individual knapsack of the form \eqref{eq:coal_IP_general} with
$c = \{i\}$, and coalitions pool their budgets under the $\alpha$-restriction
\eqref{eq:alpha_restriction}. Instances use $|R| = 50$ items, availability
probability $q = 0.5$, profits $p^i_j \sim U\{10,100\}$, weights
$a_j \sim U\{5,10\}$, capacities $b^i = \rho \sum_{j \in R_i} a_j$ with
$\rho = 0.3$ where $R_i = \{j : i \in I_j\}$, and $u^i_j = 1$, so each player
solves a $0$--$1$ knapsack. There are no private constraints, so
$B^i x^i \le d^i$ is vacuous. The individual coalition programs are
inexpensive here (Remark~\ref{rem:not-the-knapsack}), so the benchmark
isolates coalition generation and stability certification, not single-knapsack
difficulty. We report both regimes, $\alpha \in \{0.5,1.0\}$, one
instance per pair. Every configuration receives the grand coalition as a
solver MIP start, which lazy\_MIP\_ws replaces with the heuristic's partition; we checked coalition-level
efficiency for every reported incumbent afterwards by re-solving $V(c)$.

All runs used an Intel Core i9-13900F (24 cores) with 64\,GB of RAM, in Python
with Gurobi 13.0.0 \citep{gurobi} on up to 16 threads, and the enumeration
threshold $\tau = 9$ throughout. For lazy\_MIP\_ws we give
Algorithm~\ref{alg:heuristic} the budgets $T_1 = 2T/5$ and $T_2 = T/5$, leaving
the solver at least $2T/5$ and more when the heuristic finishes early; the
split worked well across our instances, and
Proposition~\ref{prop:heuristic_feasible}'s guarantee holds for any choice. Every OSCS
configuration uses the lifted cuts
\eqref{eq:lifted_cut} by default and falls back to \eqref{eq:stability_bigM}
when the synergy condition fails. The full grid ran in one session, and the
lazy\_MIP\_ws configurations were re-run in a second session after a coalition
value cache was shared between the heuristic and the solver, so timings are
comparable across methods and formulations.

Experiment~1 covers $n = 2, 4, \ldots, 20$ with $T = 1800$ seconds and tests
five methods under both formulations: the OCS formulation \eqref{eq:OCS} without
stability constraints, as a baseline, and the four OSCS configurations of
Table~\ref{tab:method_overview}. Experiment~2 covers $n = 22, 24, \ldots, 30$
with $T = 3600$ seconds and drops cutplane, for reasons given in
Section~\ref{ss:res_tractable}. Throughout, ``TL'' marks a timeout and the
adjacent Gap column gives the gap remaining at that point.

\bgroup
\renewcommand{\arraystretch}{0.95}\SingleSpacedXI
\setlength{\tabcolsep}{4pt}
\begin{table}[tbp]
\caption{OCS formulation comparison, aggregated (AGG) vs.\ disaggregated (DIS): integer performance and root LP bounds.}
\label{tab:OCS_formulation}
\centering
\small
\begin{tabular}{rcrrrrrrcrrrr}
\toprule
& & \multicolumn{3}{c}{\textbf{OCS (AGG)}} & \multicolumn{3}{c}{\textbf{OCS (DIS)}} & & \multicolumn{4}{c}{\textbf{Root LP bounds}} \\
\cmidrule(lr){3-5} \cmidrule(lr){6-8} \cmidrule(lr){10-13}
$n$ & $\alpha$ & Obj & Time & Gap & Obj & Time & Gap & Better & $\mathrm{LP}_{\mathrm{AGG}}$ & $\mathrm{LP}_{\mathrm{DIS}}$ & Tight.\ (\%) & IP gap (\%) \\
\midrule
2 & 0.5 & 1340 & 0.04 & --- & 1340 & 0.01 & --- & DIS & 2,531 & 1,376.5 & 45.6 & 2.72 \\
2 & 1.0 & 1365 & 0.01 & --- & 1365 & 0.00 & --- & DIS &  &  &  & 0.84 \\
4 & 0.5 & 3037 & 0.3 & --- & 3037 & 0.1 & --- & DIS & 13,795 & 3,114.0 & 77.4 & 2.54 \\
4 & 1.0 & 3105 & 0.09 & --- & 3105 & 0.06 & --- & DIS &  &  &  & 0.29 \\
6 & 0.5 & 3622 & 0.3 & --- & 3622 & 0.4 & --- & AGG & 29,319 & 3,699.9 & 87.4 & 2.15 \\
6 & 1.0 & 3696 & 0.1 & --- & 3696 & 0.09 & --- & DIS &  &  &  & 0.10 \\
8 & 0.5 & 5487 & 1.5 & --- & 5487 & 1.4 & --- & DIS & 61,197 & 5,614.7 & 90.8 & 2.33 \\
8 & 1.0 & 5609 & 0.4 & --- & 5609 & 0.2 & --- & DIS &  &  &  & 0.10 \\
10 & 0.5 & 7483 & 11 & --- & 7483 & 4.1 & --- & DIS & 110,032 & 7,567.9 & 93.1 & 1.13 \\
10 & 1.0 & 7564 & 1.0 & --- & 7564 & 0.6 & --- & DIS &  &  &  & 0.05 \\
12 & 0.5 & 8244 & 14 & --- & 8244 & 12 & --- & DIS & 150,727 & 8,331.4 & 94.5 & 1.06 \\
12 & 1.0 & 8325 & 1.1 & --- & 8325 & 3.4 & --- & AGG &  &  &  & 0.08 \\
14 & 0.5 & 8855 & 57 & --- & 8855 & 43 & --- & DIS & 193,826 & 8,954.8 & 95.4 & 1.13 \\
14 & 1.0 & 8953 & 1.2 & --- & 8953 & 0.3 & --- & DIS &  &  &  & 0.02 \\
16 & 0.5 & 10250 & 94 & --- & 10250 & 113 & --- & AGG & 266,031 & 10,344.9 & 96.1 & 0.93 \\
16 & 1.0 & 10341 & 55 & --- & 10341 & TL & 0.02 & AGG &  &  &  & 0.04 \\
18 & 0.5 & 11434 & TL & 0.23 & 11425 & TL & 0.38 & AGG & 346,662 & 11,563.0 & 96.7 & 1.13 \\
18 & 1.0 & 11560 & TL & 0.01 & 11560 & TL & 0.02 & AGG &  &  &  & 0.03 \\
20 & 0.5 & 12906 & TL & 0.17 & 12896 & TL & 0.30 & AGG & 441,402 & 12,981.3 & 97.1 & 0.58 \\
20 & 1.0 & 12978 & TL & 0.02 & 12978 & TL & 0.02 & --- &  &  &  & 0.03 \\
\midrule
22 & 0.5 & 15135 & TL & 0.79 & 15135 & TL & 0.73 & DIS & 567,079 & 15,342.2 & 97.3 & 1.37 \\
22 & 1.0 & 15341 & 3.3 & --- & 15341 & 0.9 & --- & DIS &  &  &  & 0.01 \\
24 & 0.5 & 16374 & TL & 0.32 & 16353 & TL & 0.31 & DIS & 672,802 & 16,445.6 & 97.6 & 0.44 \\
24 & 1.0 & 16439 & TL & 0.04 & 16439 & TL & 0.04 & --- &  &  &  & 0.04 \\
26 & 0.5 & 17906 & TL & 0.67 & 17896 & TL & 0.67 & --- & 779,708 & 18,036.9 & 97.7 & 0.73 \\
26 & 1.0 & 18033 & TL & 0.02 & 18033 & TL & 0.02 & --- &  &  &  & 0.02 \\
28 & 0.5 & 18768 & TL & 0.73 & 18760 & TL & 0.64 & DIS & 922,279 & 18,945.4 & 97.9 & 0.94 \\
28 & 1.0 & 18941 & TL & 0.02 & 18941 & TL & 0.02 & --- &  &  &  & 0.02 \\
30 & 0.5 & 20708 & TL & 0.79 & 20716 & TL & 0.75 & DIS & 1,077,559 & 20,876.5 & 98.1 & 0.77 \\
30 & 1.0 & 20876 & 6.8 & --- & 20876 & 1.6 & --- & DIS &  &  &  & 0.00 \\
\bottomrule
\end{tabular}
\par\vspace{4pt}
\parbox{0.92\linewidth}{\footnotesize \emph{Note.} Obj, wall-clock time
(s), and MIP gap (\%) at timeout (TL); ``---'' in a Gap column means solved
to optimality. ``Tight.''\ $= (\mathrm{LP}_{\mathrm{AGG}} -
\mathrm{LP}_{\mathrm{DIS}})/\mathrm{LP}_{\mathrm{AGG}}$; ``IP gap'' is the
relative gap of $\mathrm{LP}_{\mathrm{DIS}}$ to the best known integer OCS
value of the row; root bounds are reported once per $n$. The ``Better''
convention and remaining details are given in the text. Time limit:
$1800$\,s ($n \le 20$); $3600$\,s ($n \ge 22$).}
\end{table}
\egroup

\subsection{Does disaggregation pay?}
\label{ss:res_formulation}

For the OCS problem, yes, and by a wide margin at the root.
Table~\ref{tab:OCS_formulation} reports both on all 30 instances. Its ``Better''
column ranks the formulation with the smaller time when both certify, the
certifying formulation when only one does, and the smaller reported gap when
both time out, with ``---'' when the two coincide. Timeout rows report best
incumbents rather than proven optima.

\paragraph{Root bounds.}
The disaggregated bound is tighter on every instance, and the margin grows from
$45.6\%$ at $n = 2$ to $98.1\%$ at $n = 30$. The mechanism is the payoff
double-counting of the proof of Proposition~\ref{prop:disagg_LP}: the only
upper bound on a payoff in $F_{\mathrm{agg}}$ is
$v_i \le \phi_{ig} + M(1 - z_{ig})$, so a fractional assignment collects a
payoff in several groups at once. Payoffs decouple from production: the
observed bound grows like $(n-1)M$, quadratic in $n$ since $M$ grows linearly,
with a ratio of $9.79$ between $n = 30$ and $n = 10$. The linking constraint in \eqref{eq:disagg}
removes exactly this mechanism: total payoff equals total fractional production,
which the pooled resources bound, so $\mathrm{LP}_{\mathrm{DIS}}$ sits just above
the integer value. In absolute terms it lies within $0.84\%$ of the best known integer value at
$\alpha = 1.0$ and within $2.72\%$ at $\alpha = 0.5$, where the LP does not see
the $\alpha$-restriction, while the aggregated bound exceeds it by more than a
factor of $50$ at $n = 30$. The two $\alpha$ values have nearly identical
root bounds: the
indicators $w^g_j$ relax and the restriction goes inactive, so the table reports
one per $n$ (the disaggregated bounds differ by less than $0.06\%$ between the
two $\alpha$ values; the table reports $\alpha = 1.0$).

The margin measures the formulations as stated; a global bound on the objective
closes most of it but is counterproductive inside branch and bound, as
Section~\ref{app:objbound} explains.

Adding all basic and lifted stability cuts for subsets of size up to two a
priori leaves the root LP value unchanged on every instance, as argued in
Remark~\ref{rem:lifted_validity}.

\paragraph{Solve times.}
The runtime margins are far more modest than the bound margins. DIS is faster in
14 of the 17 instances where both formulations certify. At $\alpha = 1.0$ the
$\alpha$-restriction reduces to the full-pooling bound, so $V$ is superadditive by
Proposition~\ref{prop:superadd_natural} and the grand coalition is known to be
optimal in advance; those rows therefore measure certification speed rather than
search. DIS usually certifies faster, taking under two seconds at $n = 22$ and
$n = 30$ where AGG takes several times as long. The exception is $n = 16$, where
AGG certifies in 55 seconds and DIS times out. At $n \in \{18, 20, 24, 26, 28\}$
neither formulation certifies within the limit, both stalling at gaps between
$0.01$ and $0.04\%$: the last fraction of a percent is hard for both. Where both
certify they agree.

\paragraph{The reversal for OSCS.}
Once stability constraints enter, the ranking inverts: within lazy\_MIP, AGG is
faster in 9 of the 15 cells where both certify (Table~\ref{tab:SOCS_formulation}
in the e-companion). More seriously, without a warm start DIS fails to produce
good incumbents at scale on this grid: on seven of the eight tested
configurations with $n \ge 24$, and already at $n = 20$, $\alpha = 0.5$, it
returns near-trivial partitions with objectives of 303 to 375 against expected
values of 12{,}900 to 20{,}900, leaving gaps above $4{,}000\%$. Despite its
tighter relaxation and smaller constraint system
(Proposition~\ref{prop:disagg_LP}), DIS generates poor incumbents on these
large OSCS instances, an observed solver-search behavior whose cause our
evidence does not establish. With the warm start the failure never occurs,
but AGG is the safer default on the reported grid.

\bgroup
\renewcommand{\arraystretch}{0.95}\SingleSpacedXI
\setlength{\tabcolsep}{1.5pt}
\begin{table}[tbp]
\caption{OSCS separation strategy comparison (AGG formulation).}
\label{tab:SOCS_methods}
\centering
\scriptsize
\begin{tabular}{rcrrrrrrrrrrrrrrrrrrrr}
\toprule
& & \multicolumn{4}{c}{\textbf{cutplane}} & \multicolumn{4}{c}{\textbf{lazy}} & \multicolumn{4}{c}{\textbf{lazy\_MIP}} & \multicolumn{6}{c}{\textbf{lazy\_MIP\_ws}} & & \\
\cmidrule(lr){3-6} \cmidrule(lr){7-10} \cmidrule(lr){11-14} \cmidrule(lr){15-20}
$n$ & $\alpha$ & Obj & Time & Gap & Cuts & Obj & Time & Gap & Cuts & Obj & Time & Gap & Cuts & Obj & Time & Gap & Cuts & Heur & HGap & $|CS|$ & avg.\ $|c|$ \\
\midrule
2 & 0.5 & 1340 & \textbf{0.01} & --- & 0 & 1340 & \textbf{0.01} & --- & 1 & 1340 & \textbf{0.01} & --- & 1 & 1340 & 0.09 & --- & 0 & 0.0 & $<$0.01 & 2 & 1.0 \\
2 & 1.0 & 1365 & 0.02 & --- & 1 & 1365 & 0.02 & --- & 2 & 1365 & \textbf{0.01} & --- & 2 & 1365 & \textbf{0.01} & --- & 0 & 0.0 & $<$0.01 & 1 & 2.0 \\
4 & 0.5 & 3037 & 0.8 & --- & 2 & 3037 & \textbf{0.2} & --- & 7 & 3037 & \textbf{0.2} & --- & 7 & 3037 & \textbf{0.2} & --- & 0 & 0.0 & $<$0.01 & 3 & 1.3 \\
4 & 1.0 & 3105 & 0.5 & --- & 14 & 3105 & \textbf{0.1} & --- & 14 & 3105 & \textbf{0.1} & --- & 14 & 3105 & \textbf{0.1} & --- & 0 & 0.0 & $<$0.01 & 1 & 4.0 \\
6 & 0.5 & 3622 & 1.0 & --- & 4 & 3622 & \textbf{0.5} & --- & 35 & 3622 & \textbf{0.5} & --- & 35 & 3622 & 0.6 & --- & 0 & 0.0 & $<$0.01 & 4 & 1.5 \\
6 & 1.0 & 3696 & 0.8 & --- & 54 & 3696 & 0.4 & --- & 54 & 3696 & 0.4 & --- & 54 & 3696 & \textbf{0.3} & --- & 0 & 0.1 & $<$0.01 & 1 & 6.0 \\
8 & 0.5 & 5487 & 3.2 & --- & 4 & 5487 & 2.2 & --- & 158 & 5487 & 2.3 & --- & 158 & 5487 & \textbf{2.0} & --- & 0 & 0.1 & $<$0.01 & 4 & 2.0 \\
8 & 1.0 & 5606 & 5.4 & --- & 243 & 5606 & \textbf{1.6} & --- & 225 & 5606 & 1.8 & --- & 225 & 5606 & 2.1 & --- & 250 & 0.6 & 0.11 & 2 & 4.0 \\
10 & 0.5 & 7483 & 13 & --- & 4 & 7483 & 12 & --- & 136 & 7483 & 12 & --- & 137 & 7483 & \textbf{4.0} & --- & 0 & 0.2 & $<$0.01 & 6 & 1.7 \\
10 & 1.0 & 7564 & 15 & --- & 917 & 7564 & 9.2 & --- & 257 & 7564 & 9.4 & --- & 257 & 7564 & \textbf{3.1} & --- & 0 & 0.6 & $<$0.01 & 2 & 5.0 \\
12 & 0.5 & 8244 & 29 & --- & 8 & 8244 & 18 & --- & 90 & 8244 & 19 & --- & 91 & 8244 & \textbf{17} & --- & 0 & 0.2 & $<$0.01 & 6 & 2.0 \\
12 & 1.0 & 8325 & 756 & --- & 3004 & 8325 & 77 & --- & 1347 & 8325 & 75 & --- & 1348 & 8325 & \textbf{13} & --- & 0 & 2.1 & $<$0.01 & 2 & 6.0 \\
14 & 0.5 & 8855 & 188 & --- & 23 & 8855 & 150 & --- & 615 & 8855 & 147 & --- & 616 & 8855 & \textbf{72} & --- & 44 & 0.2 & 0.08 & 6 & 2.3 \\
14 & 1.0 & 8953 & 352 & --- & 12488 & 8953 & 140 & --- & 2076 & 8953 & \textbf{133} & --- & 2316 & 8953 & 172 & --- & 3272 & 3.6 & 0.15 & 2 & 7.0 \\
16 & 0.5 & 10250 & 337 & --- & 15 & 10250 & 167 & --- & 353 & 10250 & 162 & --- & 354 & 10250 & \textbf{74} & --- & 10 & 0.5 & $<$0.01 & 9 & 1.8 \\
16 & 1.0 & 10341$^\ddagger$ & TL & n/a & 36765 & 10341 & TL & 0.02 & 3094 & 10341 & TL & 0.02 & 4533 & 10341 & \textbf{1800} & --- & 2712 & 24.1 & $<$0.01 & 2 & 8.0 \\
18 & 0.5 & 11434$^\ddagger$ & TL & n/a & 0 & 11425 & TL & \textbf{0.38} & 187 & 11425 & TL & 0.39 & 188 & 11427 & TL & 0.40 & 19 & 0.7 & 0.07 & 10 & 1.8 \\
18 & 1.0 & 11560$^\ddagger$ & TL & n/a & 0 & 11557 & TL & 0.05 & 6378 & 11559 & TL & \textbf{0.03} & 4809 & 11560 & TL & \textbf{0.03} & 3030 & 28.4 & 0.10 & 3 & 6.0 \\
20 & 0.5 & 12906$^\ddagger$ & TL & n/a & 0 & 12905 & TL & 0.27 & 114 & 12905 & TL & 0.28 & 115 & 12908 & TL & \textbf{0.12} & 41 & 1.4 & 0.09 & 10 & 2.0 \\
20 & 1.0 & 12978$^\ddagger$ & TL & n/a & 0 & 12975 & TL & 0.05 & 6920 & 12970 & TL & 0.09 & 8692 & 12978 & TL & \textbf{0.02} & 6600 & 17.4 & 0.12 & 3 & 6.7 \\
\midrule
22 & 0.5 &  &  &  &  & 15125 & TL & 0.91 & 62 & 15125 & TL & 0.91 & 63 & 15151 & TL & \textbf{0.72} & 23 & 1.8 & $<$0.01 & 11 & 2.0 \\
22 & 1.0 &  &  &  &  & 15322 & TL & 0.13 & 9654 & 15327 & TL & 0.10 & 8698 & 15340 & TL & \textbf{0.01} & 7821 & 98.5 & $<$0.01 & 3 & 7.3 \\
24 & 0.5 &  &  &  &  & 16293 & TL & 0.76 & 67 & 16278 & TL & 0.83 & 70 & 16315 & TL & \textbf{0.64} & 32 & 2.2 & 0.07 & 12 & 2.0 \\
24 & 1.0 &  &  &  &  & 16435 & TL & 0.06 & 7793 & 16437 & TL & \textbf{0.05} & 5125 & 16431 & TL & 0.09 & 6561 & 10.3 & 0.05 & 5 & 4.8 \\
26 & 0.5 &  &  &  &  & 17819 & TL & 1.1 & 94 & 17816 & TL & 1.1 & 98 & 17862 & TL & \textbf{0.87} & 28 & 3.0 & $<$0.01 & 13 & 2.0 \\
26 & 1.0 &  &  &  &  & 17996 & TL & 0.23 & 6840 & 18007 & TL & 0.17 & 13090 & 18021 & TL & \textbf{0.09} & 7327 & 106.4 & 0.04 & 5 & 5.2 \\
28 & 0.5 &  &  &  &  & 18735 & TL & 0.89 & 76 & 18735 & TL & 0.89 & 77 & 18776 & TL & \textbf{0.60} & 30 & 3.6 & $<$0.01 & 14 & 2.0 \\
28 & 1.0 &  &  &  &  & 18908 & TL & 0.20 & 14429 & 18916 & TL & 0.16 & 13588 & 18930 & TL & \textbf{0.08} & 4179 & 65.5 & $<$0.01 & 6 & 4.7 \\
30 & 0.5 &  &  &  &  & 20653 & TL & 1.1 & 363 & 20653 & TL & 1.1 & 364 & 20676 & TL & \textbf{0.94} & 28 & 4.5 & $<$0.01 & 17 & 1.8 \\
30 & 1.0 &  &  &  &  & 20732 & TL & 0.70 & 18228 & 20854 & TL & 0.11 & 14181 & 20866 & TL & \textbf{0.05} & 5393 & 60.1 & $<$0.01 & 6 & 5.0 \\
\bottomrule
\end{tabular}
\par\vspace{4pt}
\parbox{0.92\linewidth}{\footnotesize \emph{Note.} \textbf{Bold} marks the best entry in each row: the fastest certified time where any configuration certifies, and otherwise the smallest remaining gap, since at a timeout every configuration reports the same elapsed time. Ties are all marked.  Obj is each
configuration's own best incumbent; ``---'' means solved to optimality;
``n/a'' means no single MIP bound is available; blank cells were not run
($n \ge 22$); $^\ddagger$ marks incumbents that are not stability-verified
(see text). Heur is the heuristic runtime (s) and HGap its gap (\%) to
the best known SCS-feasible solution; $|CS|$ and avg.\ $|c|$ describe the
lazy\_MIP\_ws incumbent. Time limits as in Table~\ref{tab:OCS_formulation}.}
\end{table}
\egroup

\subsection{Solution strategies for OSCS}
\label{ss:res_tractable}

Table~\ref{tab:SOCS_methods} compares the four OSCS configurations under AGG;
the e-companion gives DIS.

\paragraph{Separation strategies.}
All four methods return identical optima wherever they certify, which is the
basic correctness check on the separation approaches; at timeout the incumbents
differ and each column reports its own. The choice between enumeration and MIP
separation matters only when coalitions grow past the threshold. At
$\alpha = 0.5$ the formed structures are fragmented, mostly pairs and triples,
so $\tau = 9$ absorbs nearly all separation work and lazy and lazy\_MIP are
indistinguishable. At $\alpha = 1.0$ coalitions are larger and the separation
MIP earns its cost: at $n = 30$ it closes the gap to $0.11\%$ against $0.70\%$
for first-violation enumeration. As anticipated in Section~\ref{sec:separation}, the separation MIP's value
lies in its certificate rather than its cut.

\paragraph{The warm start.}
The heuristic is the single most effective component; per-instance
diagnostics are in Table~\ref{tab:heur_diag} of the e-companion. It is fastest or
tied-fastest in 11 of the 15 cells that solve to optimality, and at $n = 16$,
$\alpha = 1.0$ it is the only configuration that certifies optimality at all.
From $n = 18$ onward nothing certifies, but it holds the smallest gap in 12 of
the 14 timeout cells. At $n = 16$, $\alpha = 0.5$ it finishes in 74 seconds against 167, 162 and 337
for the three cold-started configurations. It is cheap where it needs to be,
taking at most $4.5$ seconds at $\alpha = 0.5$ and up to $106$ seconds at
$\alpha = 1.0$ where the Core checks run on large pooled coalitions, and its
objective is within $0.15\%$ of the best known stable solution. Where it
does not pay, the cold solve was already short (the only material case is
$n = 14$, $\alpha = 1.0$; Table~\ref{tab:SOCS_methods}).

\paragraph{The iterative baseline.}
The zero cut counts for cutplane at $n \in \{18, 20\}$ record that its first
re-solve never completed inside the time limit; at $n = 16$, $\alpha = 1.0$ many
rounds completed, but the loop never reached a violation-free incumbent. In all
five cases the reported incumbent comes from an interrupted loop, is not verified
against every stability constraint, and carries no MIP bound, which is why those
entries are marked $^\ddagger$. This is the cost of discarding
the branch-and-bound tree every round, and it is why Experiment~2 drops cutplane.

\paragraph{Lifted versus basic cuts.}
Running lazy\_MIP with lifted against basic cuts on all Experiment~1 instances,
the two certify identical optima on the 15 solved cells, with median runtime
ratio $0.99$ and no systematic winner on the five double timeouts
(Table~\ref{tab:ablation_lifted}). As Remark~\ref{rem:lifted_validity}
predicts, neither family moves the root bound,
so any advantage is path-dependent. The fallback separates the two regimes
structurally, never triggering at $\alpha = 1.0$, where
Proposition~\ref{prop:superadd_natural} forces $\Delta(s) \ge 0$, and firing on
$62.8\%$ of added cuts at $\alpha = 0.5$.

\subsection{Effect of the coalition policy}
\label{ss:res_solutions}

Section~\ref{ss:V_structural} argued that the coalition policy, not the CIPG
framework, decides whether the partition is a decision worth making. The solutions show
this, and more: the policy also decides whether fairness and stability
collide, because it controls how large coalitions get.

\begin{figure}[tbp]
\centering
\begin{tikzpicture}[font=\footnotesize]
\begin{scope}
\draw[->] (-0.15,0) -- (6.25,0) node[right] {$n$};
\draw[->] (0,-0.15) -- (0,3.80);
\draw[gray!22] (0,0.000) -- (5.90,0.000);
\node[left,gray!60] at (-0.06,0.000) {\scriptsize 0};
\draw[gray!22] (0,1.167) -- (5.90,1.167);
\node[left,gray!60] at (-0.06,1.167) {\scriptsize 10};
\draw[gray!22] (0,2.333) -- (5.90,2.333);
\node[left,gray!60] at (-0.06,2.333) {\scriptsize 20};
\draw[gray!22] (0,3.500) -- (5.90,3.500);
\node[left,gray!60] at (-0.06,3.500) {\scriptsize 30};
\node[below] at (0.000,-0.05) {\scriptsize 2};
\node[below] at (1.686,-0.05) {\scriptsize 10};
\node[below] at (3.793,-0.05) {\scriptsize 20};
\node[below] at (5.900,-0.05) {\scriptsize 30};
\node[rotate=90] at (-0.72,1.75) {\scriptsize number of coalitions};
\draw[gray!70,dotted,thick] (0.000,0.233) -- (5.900,3.500);
\node[gray!70,rotate=34,anchor=south] at (4.425,2.683) {\scriptsize singletons};
\draw[gray!70,dotted,thick] (0.000,0.117) -- (5.900,1.750);
\node[gray!70,rotate=19,anchor=south] at (4.636,1.400) {\scriptsize all pairs};
\draw[thick] plot coordinates {(0.000,0.233) (0.421,0.350) (0.843,0.467) (1.264,0.467) (1.686,0.700) (2.107,0.700) (2.529,0.700) (2.950,1.050) (3.371,1.167) (3.793,1.167) (4.214,1.283) (4.636,1.400) (5.057,1.517) (5.479,1.633) (5.900,1.983)};
\fill (0.000,0.233) circle (1.4pt);
\fill (0.421,0.350) circle (1.4pt);
\fill (0.843,0.467) circle (1.4pt);
\fill (1.264,0.467) circle (1.4pt);
\fill (1.686,0.700) circle (1.4pt);
\fill (2.107,0.700) circle (1.4pt);
\fill (2.529,0.700) circle (1.4pt);
\fill (2.950,1.050) circle (1.4pt);
\fill (3.371,1.167) circle (1.4pt);
\fill (3.793,1.167) circle (1.4pt);
\fill (4.214,1.283) circle (1.4pt);
\fill (4.636,1.400) circle (1.4pt);
\fill (5.057,1.517) circle (1.4pt);
\fill (5.479,1.633) circle (1.4pt);
\fill (5.900,1.983) circle (1.4pt);
\draw[thick,dashed] plot coordinates {(0.000,0.117) (0.421,0.117) (0.843,0.117) (1.264,0.233) (1.686,0.233) (2.107,0.233) (2.529,0.233) (2.950,0.233) (3.371,0.350) (3.793,0.350) (4.214,0.350) (4.636,0.583) (5.057,0.583) (5.479,0.700) (5.900,0.700)};
\draw[fill=white,thick] (0.000,0.117) circle (1.4pt);
\draw[fill=white,thick] (0.421,0.117) circle (1.4pt);
\draw[fill=white,thick] (0.843,0.117) circle (1.4pt);
\draw[fill=white,thick] (1.264,0.233) circle (1.4pt);
\draw[fill=white,thick] (1.686,0.233) circle (1.4pt);
\draw[fill=white,thick] (2.107,0.233) circle (1.4pt);
\draw[fill=white,thick] (2.529,0.233) circle (1.4pt);
\draw[fill=white,thick] (2.950,0.233) circle (1.4pt);
\draw[fill=white,thick] (3.371,0.350) circle (1.4pt);
\draw[fill=white,thick] (3.793,0.350) circle (1.4pt);
\draw[fill=white,thick] (4.214,0.350) circle (1.4pt);
\draw[fill=white,thick] (4.636,0.583) circle (1.4pt);
\draw[fill=white,thick] (5.057,0.583) circle (1.4pt);
\draw[fill=white,thick] (5.479,0.700) circle (1.4pt);
\draw[fill=white,thick] (5.900,0.700) circle (1.4pt);
\node[anchor=west] at (6.02,1.98) {$\alpha=0.5$};
\node[anchor=west] at (6.02,0.70) {$\alpha=1.0$};
\node at (2.95,-0.72) {(a) coalitions formed};
\end{scope}
\begin{scope}[xshift=8.85cm]
\draw[->] (-0.15,0) -- (6.25,0) node[right] {$|c|$};
\draw[->] (0,-0.15) -- (0,3.80);
\draw[gray!22] (0,0.000) -- (5.90,0.000);
\node[left,gray!60] at (-0.06,0.000) {\scriptsize 0\%};
\draw[gray!22] (0,1.750) -- (5.90,1.750);
\node[left,gray!60] at (-0.06,1.750) {\scriptsize 50\%};
\draw[gray!22] (0,3.500) -- (5.90,3.500);
\node[left,gray!60] at (-0.06,3.500) {\scriptsize 100\%};
\fill[black!65] (0.125,0) rectangle (0.612,3.500);
\draw (0.125,0) rectangle (0.612,3.500);
\node[below] at (0.369,-0.05) {\scriptsize 2};
\node[above,gray!80] at (0.369,3.500) {\tiny 98/98};
\fill[black!65] (0.863,0) rectangle (1.350,2.917);
\draw (0.863,0) rectangle (1.350,2.917);
\node[below] at (1.106,-0.05) {\scriptsize 3};
\node[above,gray!80] at (1.106,2.917) {\tiny 10/12};
\fill[black!65] (1.600,0) rectangle (2.087,2.450);
\draw (1.600,0) rectangle (2.087,2.450);
\node[below] at (1.844,-0.05) {\scriptsize 4};
\node[above,gray!80] at (1.844,2.450) {\tiny 7/10};
\fill[black!65] (2.338,0) rectangle (2.825,0.875);
\draw (2.338,0) rectangle (2.825,0.875);
\node[below] at (2.581,-0.05) {\scriptsize 5};
\node[above,gray!80] at (2.581,0.875) {\tiny 1/4};
\fill[black!65] (3.075,0) rectangle (3.562,1.400);
\draw (3.075,0) rectangle (3.562,1.400);
\node[below] at (3.319,-0.05) {\scriptsize 6};
\node[above,gray!80] at (3.319,1.400) {\tiny 2/5};
\fill[black!65] (3.813,0) rectangle (4.300,0.000);
\draw (3.813,0) rectangle (4.300,0.000);
\node[below] at (4.056,-0.05) {\scriptsize 7};
\node[above,gray!80] at (4.056,0.000) {\tiny 0/8};
\fill[black!65] (4.550,0) rectangle (5.037,0.000);
\draw (4.550,0) rectangle (5.037,0.000);
\node[below] at (4.794,-0.05) {\scriptsize 8};
\node[above,gray!80] at (4.794,0.000) {\tiny 0/4};
\fill[black!65] (5.288,0) rectangle (5.775,0.000);
\draw (5.288,0) rectangle (5.775,0.000);
\node[below] at (5.531,-0.05) {\scriptsize 9};
\node[above,gray!80] at (5.531,0.000) {\tiny 0/5};
\node[rotate=90] at (-1.12,1.75) {\scriptsize Shapley $\in$ Core};
\node at (2.95,-0.72) {(b) share of Shapley values in the Core};
\end{scope}
\end{tikzpicture}
\caption{The coalition policy sets the size of the formed coalitions, and size
decides whether a fair allocation is also stable. (a) Coalitions in the
lazy\_MIP\_ws incumbent (Table~\ref{tab:SOCS_methods}), against the two
trivial answers.
Restricted pooling gives slightly more coalitions than a perfect pairing, and
full pooling stays far above the grand coalition from $n = 8$ on; beyond the
smallest instances, neither regime returns a trivial structure. (b) Share of the $146$ formed coalitions of
each size whose Shapley value lies in the Core, with the count above each bar (Table~\ref{tab:refinement_summary}).}
\label{fig:solutions}
\end{figure}

\paragraph{Structures.}
Figure~\ref{fig:solutions}(a) places the computed structures against the two a
reader might expect, all singletons and the grand coalition, and neither regime
returns either. Under restricted pooling the
count runs just above the all-pairs line, so the answers are close to a matching
with a few players left alone; what is being decided is which players to pair,
not how many coalitions to form. Under full pooling the count sits far from the
grand coalition from $n = 8$ onward. Superadditivity makes the grand
coalition OCS-optimal there, so each nontrivial OSCS partition either ties its
welfare or is forced by an empty Core; the only certified loss is at $n = 8$. Per-instance detail is in
Table~\ref{tab:coalition_structures}.

Stability is free at $\alpha = 0.5$: OCS and OSCS coincide on every instance
solved to optimality, that is for $n \le 16$. Beyond $n = 16$ neither run is certified, so the incumbents are not
comparable, and in three cells the OSCS incumbent is the larger. The agreement
has a structural reason: a pair is worth forming exactly when
$V(\{i,j\}) \ge V(\{i\}) + V(\{j\})$, and in that case a stable two-way split
always exists, so every OCS pair is stabilizable. At $\alpha = 1.0$ the splits are certified
through $n = 16$, and only once does one cost anything: at $n = 8$ the OSCS gives
up three units, $5{,}609 \to 5{,}606$. That near-zero cost is a property of
these instances, not of CIPGs. With three players, $V(\{i\}) = 0$, every pair
worth $10$ and $V(N) = 12$, the pair inequalities force a total payoff of at
least $15 > 12$, so the grand coalition's Core is empty and stability costs
nearly $17\%$.

\paragraph{Payoffs.}
The same regimes decide whether a fair allocation is also stable. Refining every
OSCS solution as in Section~\ref{ss:coalition_refinement} treats 171
coalitions, 146 of size at least two (Table~\ref{tab:refinement_summary}). The nucleolus is Core-feasible in all 146, as the SCS guarantee
requires. The Shapley value is not: it lies in the Core in 118 of the 146, and membership
declines with size, reaching zero for the observed coalitions of
sizes $7$--$9$ (Figure~\ref{fig:solutions}(b); the dip at $|c| = 5$ rests on
four coalitions). The contrast across regimes is sharp, 100 of 102 at
$\alpha = 0.5$ against 18 of 44 at $\alpha = 1.0$, the empirical
counterpart of Example~\ref{ex:nonconvex}, since the Shapley value is
guaranteed to lie in the Core only for convex games. Every violation is under $0.232\%$ of $V(c)$, so the Shapley value is
nearly stable even when it fails, but a manager who wants fairness and
stability under full pooling should take the nucleolus.

\subsection{Cost of the framework}
\label{ss:res_cost}

\paragraph{Against exhaustive enumeration.}
As an implementation-independent check we ran a baseline that computes all
$2^n - 1$ coalition values, finds the OCS by dynamic programming over
partitions, and certifies the OSCS with Core-nonemptiness LPs. Wherever it
completes, its optimal values and partitions agree exactly with
Tables~\ref{tab:OCS_formulation} and~\ref{tab:SOCS_methods}
(Table~\ref{tab:bruteforce} in the e-companion).

Through $n \approx 14$ to $16$ the two routes are comparable. The difference is
entirely in the scaling: value enumeration alone is $\Theta(2^n)$, and the OSCS
search stage reaches more than 25 years at $n = 30$. Our
pipeline evaluates 766 of the $2^{30} - 1 \approx 1.07 \times 10^9$ coalition
values at $n = 30$, $\alpha = 0.5$, and returns certified sub-1\% solutions
inside the hour. At small $n$ under full pooling the Core checks touch every
subset anyway, so the saving is asymptotic.

Brute force is also all or nothing. Interrupted, it
yields neither a usable incumbent nor a bound, whereas every incumbent the lazy
framework accepts carries a certified gap, and every lazy\_MIP\_ws incumbent
was verified SCS-feasible, so the pipeline degrades gracefully with $n$.

\subsection{Implementation recommendations}
\label{ss:res_recommendations}

Six choices arise. Use the \emph{aggregated} model
\eqref{eq:OCS} for an OSCS and the \emph{disaggregated} model \eqref{eq:disagg}
for an OCS; without a warm start DIS produced poor incumbents at scale on our
instances. Add
stability as \emph{lazy constraints} in one branch-and-bound tree, not by
re-solving. Separate by \emph{enumeration} up to $\tau = 9$ and by the
\emph{separation MIP} \eqref{eq:sep_single_level} above it, which alone returns a
certificate. Prefer the \emph{lifted cut} \eqref{eq:lifted_cut}, falling back to
\eqref{eq:stability_bigM} when $\Delta(s) < 0$: no faster, but dominant and free.
\emph{Warm start} with Algorithm~\ref{alg:heuristic}, the most effective
choice on every large instance. Allocate with the \emph{nucleolus}.

\paragraph{Limitations.}
We claim two things only within limits. First, OSCS optimality is certified
only to $n = 16$, and at $\alpha = 1.0$ only by the warm-started
configuration, right at the limit (the others reach $n = 14$). Beyond that,
quality rests on the
reported gaps, below $1\%$ for lazy\_MIP\_ws under AGG and $1.1\%$ for the other
AGG configurations, and closing them at scale calls for decomposition or
branch-and-price. Second, we use one instance per $(n,\alpha)$ pair to span a
wide range of sizes and both regimes; a randomized study across seeds and
$(|R|, q, \rho)$ would strengthen the conclusions.

\section{Conclusion} \label{s:conclusion}

What makes a cooperative integer programming game hard is the coalition
policy laid over it, not the integrality of the coalition programs. Under full pooling the grand
coalition is optimal in advance and the computational problem is
certification. Under a restricted bound the partition becomes a decision
worth making, and difficulties follow: superadditivity can be lost,
stability needs separation, and on our instances DIS needed a warm start to
produce good incumbents.

Two directions stand out. Since payoff-space cuts leave the root bound
unchanged, tightening it needs valid inequalities in the assignment and item
variables, such as cover inequalities for~\eqref{eq:OCS_cap}. SCS guards
against fission, not fusion; merge-proofness atop Core stability is open
\citep{yang2025weak}.


%

%
%



\bigskip
\noindent\textbf{Code and data availability.} All code, instance generators, and
result files used in this paper are available at
\url{https://github.com/HyunwooLee0429/CIPG/}.

\bigskip
\noindent\textbf{Acknowledgments.} We thank Jian Yang for his constructive
feedback and for a rigorous review of the related literature during the
preparation of the conference version of this work.

\SingleSpacedXI
\bibliography{references} 

\begin{thebibliography}{39}
\providecommand{\natexlab}[1]{#1}
\providecommand{\url}[1]{\texttt{#1}}
\expandafter\ifx\csname urlstyle\endcsname\relax
  \providecommand{\doi}[1]{doi: #1}\else
  \providecommand{\doi}{doi: \begingroup \urlstyle{rm}\Url}\fi

\bibitem[Apt and Radzik(2006)]{apt2006stable}
Krzysztof~R Apt and Tadeusz Radzik.
\newblock Stable partitions in coalitional games.
\newblock \emph{Preprint,}, 2006.
\newblock \url{https://arxiv.org/abs/cs/0605132}.

\bibitem[Apt and Witzel(2009)]{apt2009generic}
Krzysztof~R Apt and Andreas Witzel.
\newblock A generic approach to coalition formation.
\newblock \emph{Internat. Game Theory Rev.}, 11\penalty0 (03):\penalty0
  347--367, 2009.

\bibitem[Arribillaga and Berganti{\~n}os(2022)]{arribillaga2022cooperative}
R~Pablo Arribillaga and Gustavo Berganti{\~n}os.
\newblock Cooperative and axiomatic approaches to the knapsack allocation
  problem.
\newblock \emph{Ann. Oper. Res.}, 318\penalty0 (2):\penalty0 805--830, 2022.

\bibitem[Aumann and Dr{\`e}ze(1974)]{aumann1974cooperative}
Robert~J Aumann and Jacques~H Dr{\`e}ze.
\newblock Cooperative games with coalition structures.
\newblock \emph{Internat. J. Game Theory}, 3\penalty0 (4):\penalty0 217--237,
  1974.

\bibitem[Borm et~al.(2001)Borm, Hamers, and Hendrickx]{borm2001operations}
Peter Borm, Herbert Hamers, and Ruud Hendrickx.
\newblock Operations research games: A survey.
\newblock \emph{Top}, 9\penalty0 (2):\penalty0 139--199, 2001.

\bibitem[Caprara and Letchford(2010)]{caprara2010cost}
Alberto Caprara and Adam~N Letchford.
\newblock New techniques for cost sharing in combinatorial optimization games.
\newblock \emph{Math. Programming}, 124\penalty0 (1--2):\penalty0 93--118,
  2010.

\bibitem[Carvalho et~al.(2023)Carvalho, Dragotto, Lodi, and
  Sankaranarayanan]{carvalho2023integer}
Margarida Carvalho, Gabriele Dragotto, Andrea Lodi, and Sriram
  Sankaranarayanan.
\newblock Integer programming games: A gentle computational overview.
\newblock In \emph{Tutorials in Oper. Res.}, pages 31--51. INFORMS, 2023.

\bibitem[Chalkiadakis et~al.(2011)Chalkiadakis, Elkind, and
  Wooldridge]{chalkiadakis2011computational}
Georgios Chalkiadakis, Edith Elkind, and Michael Wooldridge.
\newblock \emph{Computational aspects of cooperative game theory}.
\newblock Morgan \& Claypool Publishers, 2011.

\bibitem[Chardaire(2001)]{chardaire2001core}
Pierre Chardaire.
\newblock The core and nucleolus of games: A note on a paper by
  {G}{\"o}the-{L}undgren et al.
\newblock \emph{Math. Programming}, 90\penalty0 (1):\penalty0 147--151, 2001.

\bibitem[Crowder et~al.(1983)Crowder, Johnson, and Padberg]{crowder1983solving}
Harlan Crowder, Ellis~L Johnson, and Manfred Padberg.
\newblock Solving large-scale zero-one linear programming problems.
\newblock \emph{Oper. Res.}, 31\penalty0 (5):\penalty0 803--834, 1983.

\bibitem[Darmann and Klamler(2014)]{DarmannKlamler2014}
Andreas Darmann and Christian Klamler.
\newblock Knapsack cost sharing.
\newblock \emph{Rev. Econom. Design}, 18\penalty0 (3):\penalty0 219--241, 2014.

\bibitem[Deng et~al.(1999)Deng, Ibaraki, and Nagamochi]{deng1999algorithmic}
Xiaotie Deng, Toshihide Ibaraki, and Hiroshi Nagamochi.
\newblock Algorithmic aspects of the core of combinatorial optimization games.
\newblock \emph{Math. Oper. Res.}, 24\penalty0 (3):\penalty0 751--766, 1999.

\bibitem[Dragotto and Scatamacchia(2023)]{dragotto2023zero}
Gabriele Dragotto and Rosario Scatamacchia.
\newblock The zero regrets algorithm: Optimizing over pure {N}ash equilibria
  via integer programming.
\newblock \emph{INFORMS J. Comput.}, 35\penalty0 (5):\penalty0 1143--1160,
  2023.

\bibitem[Dror(1990)]{Dror1990}
Moshe Dror.
\newblock Cost allocation: The traveling salesman, binpacking, and the
  knapsack.
\newblock \emph{Appl. Math. Comput.}, 35\penalty0 (2):\penalty0 191--207, 1990.

\bibitem[Eryganov et~al.(2020)Eryganov, {\v{S}}ompl{\'a}k, Nevrl{\`y},
  Smejkalov{\'a}, Hrabec, and Haugen]{eryganov2020application}
Ivan Eryganov, Radovan {\v{S}}ompl{\'a}k, Vlastim{\'\i}r Nevrl{\`y}, Veronika
  Smejkalov{\'a}, Du{\v{s}}an Hrabec, and Kjetil~K Haugen.
\newblock Application of cooperative game theory in waste management.
\newblock \emph{Chemical Engrg. Trans.}, 81:\penalty0 877--882, 2020.

\bibitem[Frangioni and Gentile(2006)]{frangioni2006perspective}
Antonio Frangioni and Claudio Gentile.
\newblock Perspective cuts for a class of convex 0--1 mixed integer programs.
\newblock \emph{Math. Programming}, 106\penalty0 (2):\penalty0 225--236, 2006.

\bibitem[Gansterer and Hartl(2018)]{gansterer2018collaborative}
Margaretha Gansterer and Richard~F Hartl.
\newblock Collaborative vehicle routing: A survey.
\newblock \emph{Eur. J. Oper. Res.}, 268\penalty0 (1):\penalty0 1--12, 2018.

\bibitem[G{\"o}the-Lundgren et~al.(1996)G{\"o}the-Lundgren, J{\"o}rnsten, and
  V{\"a}rbrand]{gothe1996nucleolus}
Maud G{\"o}the-Lundgren, Kurt J{\"o}rnsten, and Peter V{\"a}rbrand.
\newblock On the nucleolus of the basic vehicle routing game.
\newblock \emph{Math. Programming}, 72\penalty0 (1):\penalty0 83--100, 1996.

\bibitem[Granot(1986)]{granot1986generalized}
Daniel Granot.
\newblock A generalized linear production model: A unifying model.
\newblock \emph{Math. Programming}, 34\penalty0 (2):\penalty0 212--222, 1986.

\bibitem[Gu et~al.(1998)Gu, Nemhauser, and Savelsbergh]{gu1998lifted}
Zonghao Gu, George~L Nemhauser, and Martin~WP Savelsbergh.
\newblock Lifted cover inequalities for 0-1 integer programs: Computation.
\newblock \emph{INFORMS J. Comput.}, 10\penalty0 (4):\penalty0 427--437, 1998.

\bibitem[Guajardo and R{\"o}nnqvist(2015)]{guajardo2015coalition}
Mario Guajardo and Mikael R{\"o}nnqvist.
\newblock Operations research models for coalition structure in collaborative
  logistics.
\newblock \emph{Eur. J. Oper. Res.}, 240\penalty0 (1):\penalty0 147--159, 2015.

\bibitem[Guajardo and R{\"o}nnqvist(2016)]{guajardo2016review}
Mario Guajardo and Mikael R{\"o}nnqvist.
\newblock A review on cost allocation methods in collaborative transportation.
\newblock \emph{Internat. Trans. Oper. Res.}, 23\penalty0 (3):\penalty0
  371--392, 2016.

\bibitem[G{\"u}nl{\"u}k and Linderoth(2010)]{gunluk2010perspective}
Oktay G{\"u}nl{\"u}k and Jeff Linderoth.
\newblock Perspective reformulations of mixed integer nonlinear programs with
  indicator variables.
\newblock \emph{Math. Programming}, 124\penalty0 (1--2):\penalty0 183--205,
  2010.

\bibitem[{Gurobi Optimization, LLC}(2025)]{gurobi}
{Gurobi Optimization, LLC}.
\newblock Gurobi optimizer reference manual, 2025.
\newblock URL \url{https://www.gurobi.com}.

\bibitem[K{\'o}czy(2018)]{koczy2018partition}
L{\'a}szl{\'o}~{\'A}. K{\'o}czy.
\newblock \emph{Partition Function Form Games: Coalitional Games with
  Externalities}, volume~48 of \emph{Theory and Decision Library C}.
\newblock Springer, 2018.

\bibitem[Lee et~al.(2024)Lee, Hildebrand, Cai, and
  B{\"u}y{\"u}ktahtak{\i}n]{lee2024algorithms}
Hyunwoo Lee, Robert Hildebrand, Wenbo Cai, and {\.I}~Esra
  B{\"u}y{\"u}ktahtak{\i}n.
\newblock Random-restart best-response dynamics for large-scale integer
  programming games and their applications.
\newblock \emph{Preprint,}, 2024.
\newblock \url{https://arxiv.org/abs/2409.04078}.

\bibitem[Maschler et~al.(1979)Maschler, Peleg, and
  Shapley]{maschler1979geometric}
Michael Maschler, Bezalel Peleg, and Lloyd~S Shapley.
\newblock Geometric properties of the kernel, nucleolus, and related solution
  concepts.
\newblock \emph{Math. Oper. Res.}, 4\penalty0 (4):\penalty0 303--338, 1979.

\bibitem[Owen(1975)]{owen1975core}
Guillermo Owen.
\newblock On the core of linear production games.
\newblock \emph{Math. Programming}, 9\penalty0 (1):\penalty0 358--370, 1975.

\bibitem[{\"O}zener et~al.(2013){\"O}zener, Ergun, and
  Savelsbergh]{ozener2013allocating}
Okan~{\"O}rsan {\"O}zener, {\"O}zlem Ergun, and Martin Savelsbergh.
\newblock Allocating cost of service to customers in inventory routing.
\newblock \emph{Oper. Res.}, 61\penalty0 (1):\penalty0 112--125, 2013.

\bibitem[Peleg and Sudh{\"o}lter(2007)]{peleg2007introduction}
Bezalel Peleg and Peter Sudh{\"o}lter.
\newblock \emph{Introduction to the theory of cooperative games}.
\newblock Springer, 2007.

\bibitem[Rahwan et~al.(2009)Rahwan, Ramchurn, Jennings, and
  Giovannucci]{rahwan2009anytime}
Talal Rahwan, Sarvapali~D Ramchurn, Nicholas~R Jennings, and Andrea
  Giovannucci.
\newblock An anytime algorithm for optimal coalition structure generation.
\newblock \emph{J. Artificial Intelligence Res.}, 34:\penalty0 521--567, 2009.

\bibitem[Rahwan et~al.(2015)Rahwan, Michalak, Wooldridge, and
  Jennings]{rahwan2015coalition}
Talal Rahwan, Tomasz~P Michalak, Michael Wooldridge, and Nicholas~R Jennings.
\newblock Coalition structure generation: A survey.
\newblock \emph{Artificial Intelligence}, 229:\penalty0 139--174, 2015.

\bibitem[Sandholm et~al.(1999)Sandholm, Larson, Andersson, Shehory, and
  Tohm{\'e}]{sandholm1999coalition}
Tuomas Sandholm, Kate Larson, Martin Andersson, Onn Shehory, and Fernando
  Tohm{\'e}.
\newblock Coalition structure generation with worst case guarantees.
\newblock \emph{Artificial Intelligence}, 111\penalty0 (1-2):\penalty0
  209--238, 1999.

\bibitem[Schmeidler(1969)]{schmeidler1969nucleolus}
David Schmeidler.
\newblock The nucleolus of a characteristic function game.
\newblock \emph{SIAM J. Appl. Math.}, 17\penalty0 (6):\penalty0 1163--1170,
  1969.

\bibitem[Shapley(1953)]{shapley1953value}
Lloyd~S Shapley.
\newblock A value for $n$-person games.
\newblock In Harold~W Kuhn and Albert~W Tucker, editors, \emph{Contributions to
  the Theory of Games II}, pages 307--317. Princeton University Press, 1953.

\bibitem[Stamtsis and Erlich(2004)]{stamtsis2004use}
Georgios~C Stamtsis and Istv{\'a}n Erlich.
\newblock Use of cooperative game theory in power system fixed-cost allocation.
\newblock \emph{IEE Proc.-Generation, Transmission Distribution}, 151\penalty0
  (3):\penalty0 401--406, 2004.

\bibitem[Wang et~al.(2003)Wang, Fang, and Hipel]{wang2003water}
LZ~Wang, Liping Fang, and Keith~W Hipel.
\newblock Water resources allocation: a cooperative game theoretic approach.
\newblock \emph{J. Environ. Informatics}, 2\penalty0 (2):\penalty0 11--22,
  2003.

\bibitem[Wang et~al.(2024)Wang, Zeng, Cheng, Liu, Huang, Liu, He, Yao, Wang,
  and Li]{wang2024cooperative}
Qin Wang, Jincan Zeng, Beibei Cheng, Minwei Liu, Guori Huang, Xi~Liu, Gengsheng
  He, Shangheng Yao, Peng Wang, and Longxi Li.
\newblock A cooperative game approach for optimal design of shared energy
  storage system.
\newblock \emph{Sustainability}, 16\penalty0 (17):\penalty0 7255, 2024.

\bibitem[Yang(2025)]{yang2025weak}
Jian Yang.
\newblock The weak core, partition-based universal stability, and their risk
  associations through a partial order.
\newblock \emph{Naval Res. Logist.}, 2025.
\newblock Forthcoming.

\end{thebibliography}

\RUNAUTHOR{Lee et al.}
\ECSwitch
\OneAndAHalfSpacedXII

\ECHead{E-Companion}

\section{Notation}\label{app:notation}

\begin{table}[h!]
  \caption{Notation for the OCS formulation.}
  \label{tab:OCS_notation}
  \centering
  \renewcommand{\arraystretch}{0.95}
  \begin{tabular}{@{}ll@{}}
    \toprule
    \multicolumn{2}{@{}l}{\textit{Sets and indices}} \\
    \midrule
    \(N\)                  & set of players; index \(i \in N\) \\
    \(G := N\)             & set of potential group indices; at most \(|N|\)
                             groups form \\
    \(R\)                  & item set; index \(j \in R\) \\
    \(I_j \subseteq N\)    & players for whom item \(j\) is available \\
    \addlinespace
    \multicolumn{2}{@{}l}{\textit{Parameters}} \\
    \midrule
    \(p^i_j\)              & profit of item \(j\) for player \(i\) \\
    \(a_j\)                & weight of item \(j\), common across players \\
    \(b^i\)                & budget of player \(i\) \\
    \(u^i_j\)              & upper bound on usage of item \(j\) by player \(i\) \\
    \(B^i, d^i\)           & data of the private constraints
                             \(B^i x^i \le d^i\) \\
    \(\alpha \in (0,1]\)   & pooling parameter; the fraction of the
                             full-pooling bound a coalition may use on a common item \\
    \addlinespace
    \multicolumn{2}{@{}l}{\textit{Decision variables}} \\
    \midrule
    \(z_{ig} \in \{0,1\}\) & 1 if player \(i\) is assigned to group \(g\) \\
    \(x^{i,g}_j \ge 0\)    & usage of item \(j\) by player \(i\) in group \(g\) \\
    \(y^g_j \in \mathbb{Z}_+\) & aggregate usage of item \(j\) in group \(g\) \\
    \(w^g_j \in \{0,1\}\)  & 1 if item \(j\) is common in group \(g\), that is,
                             available to \(\ge 2\) assigned players \\
    \(\pi_g \ge 0\)        & total profit generated by group \(g\) \\
    \(v_i \ge 0\)          & payoff allocated to player \(i\) \\
    \(\phi_{ig} \ge 0\)    & linearization auxiliary,
                             \(\phi_{ig} = v_i z_{ig}\) (aggregated model) \\
    \(v_{ig} \ge 0\)       & payoff player \(i\) receives from group \(g\)
                             (disaggregated model) \\
    \bottomrule
  \end{tabular}
\end{table}

\section{Omitted Proofs and Examples}\label{app:proofs}

\begin{example}[Superadditive but not convex]
\label{ex:nonconvex}
Take $N = \{1,2,3\}$ with one item of weight $a = 4$ and profit $p^i = 4$,
available to all three players with $u^i = 1$ each, and budgets $b^i = 2$. Under full
pooling a coalition $c$ has budget $2|c|$ and bound $y^c \le |c|$, so
\[
  V(\{i\}) = 0, \qquad V(\{i,j\}) = 4, \qquad V(N) = 4,
\]
since two units would cost $8 > 6$. The game is superadditive, as
Proposition~\ref{prop:superadd_natural} requires. It is not convex: taking
$c = \{1,2\}$ and $d = \{1,3\}$ gives
\[
  V(c) + V(d) \;=\; 8 \;>\; 4 \;=\; V(c \cup d) + V(c \cap d).
\]
The second member of a coalition is worth $4$ and the third is worth nothing.
\end{example}

\begin{example}[A budget-only player must be paid beyond its own profit potential]
\label{ex:budget_only}
Consider three players with budgets $b^1 = b^2 = 1$ and
$b^3 = 10$, an item of profit $8$ and weight $10$ available only to player
1, an identical item available only to player 2, and a common item of
profit $2$ and weight $2$ available to players 1 and 2. Under full
pooling, $V(\{1\}) = V(\{2\}) = V(\{3\}) = 0$, $V(\{1,2\}) = 2$,
$V(\{1,3\}) = V(\{2,3\}) = 8$, and $V(N) = 10$. Summing the Core
inequalities for the two mixed pairs gives
$v_1 + v_2 + 2 v_3 \ge 16$, and with $v_1 + v_2 + v_3 = 10$ this forces
$v_3 \ge 6$; e.g., $(2, 2, 6)$ is a Core allocation. Since player 3 has no
items, $U_3 = 0$, so the cap $v_3 \le U_3$ would cut off \emph{every} Core
allocation of the grand coalition.
\end{example}

\begin{proof}{Proof of Proposition~\ref{prop:disagg_LP}.}
\emph{(Same mixed-integer feasible points.)}
Fix an integer assignment $z$ and let $g^*(i)$ be the unique group with
$z_{i,g^*(i)} = 1$. In the aggregated model, McCormick at integer $z$ gives
$\phi_{i,g^*(i)} = v_i$ and $\phi_{ig} = 0$ for $g \neq g^*(i)$, so
efficiency becomes $\pi_g = \sum_{i:\, g^*(i)=g} v_i$. In the disaggregated
model, the upper bound in~\eqref{eq:disagg} forces $v_{ig} = 0$ for
$g \neq g^*(i)$, and linking gives
$v_{i,g^*(i)} = v_i$, so efficiency yields the same
expression. Hence the two formulations have the same mixed-integer feasible
points.

\emph{($F_{\mathrm{disagg}} \subseteq F_{\mathrm{agg}}$.)}
We show every aggregated constraint is satisfied by points of
$F_{\mathrm{disagg}}$.
(i)~The upper bound $v_{ig} \le M z_{ig}$ is exactly the McCormick
constraint $\phi_{ig} \le M z_{ig}$.
(ii)~The efficiency $\pi_g = \sum_i v_{ig}$ is $\pi_g = \sum_i \phi_{ig}$,
which is the aggregated efficiency constraint.
(iii)~The remaining two McCormick constraints are implied by the linking
constraint $v_i = \sum_g v_{ig}$: first,
$\phi_{ig} = v_{ig} \le \sum_{g'} v_{ig'} = v_i$ since $v_{ig'} \ge 0$;
second,
$v_{ig} = v_i - \sum_{g' \neq g} v_{ig'} \ge v_i - \sum_{g' \neq g} M z_{ig'}
= v_i - M(1 - z_{ig})$,
using $v_{ig'} \le M z_{ig'}$ and the partition constraint
$\sum_{g'} z_{ig'} = 1$.
Since the linking constraint is \emph{not} present in $F_{\mathrm{agg}}$,
every point of $F_{\mathrm{disagg}}$ satisfies all constraints of
$F_{\mathrm{agg}}$, so $F_{\mathrm{disagg}} \subseteq F_{\mathrm{agg}}$.

(Strict inclusion):
For suitable instance data, consider $|N|=2$ with $M=100$, and take
$z_{11}=1$, $z_{21}=z_{22}=0.5$, $v_2=50$; this satisfies the partition and
symmetry-breaking constraints, and it suffices that the instance admits
fractional item usage generating group profits of $50$ in each group (two
items with sufficiently large profit and capacity suffice, since the
$x$-variables are continuous in the LP relaxation). In $F_{\mathrm{agg}}$:
$\phi_{21} \le \min(v_2,\, M z_{21}) = 50$,
$\phi_{21} \ge v_2 - M(1-z_{21}) = 0$, and similarly for $\phi_{22}$.
Setting $\phi_{21} = \phi_{22} = 50$ contributes $100 > 50 = v_2$ to the
group values: the LP relaxation double-counts player 2's payoff across
groups. In $F_{\mathrm{disagg}}$: $v_{21} + v_{22} = v_2 = 50$, so player
2's payoff is counted only once.

Finally, adding the linking constraint $v_i = \sum_g \phi_{ig}$ to
$F_{\mathrm{agg}}$ yields exactly the constraint set of
$F_{\mathrm{disagg}}$, which proves the last assertion. \Halmos
\end{proof}

\begin{proof}{Proof of Proposition~\ref{prop:stability_valid}.}
(i) Let \(k := |s| - \sum_{i\in s} z_{ig} \in \mathbb{Z}_{\ge 0}\). If
\(k = 0\), then \(s \subseteq c(g)\) and \eqref{eq:stability_bigM} is the
Core inequality \(\sum_{i\in s} v_i \ge V(s)\) for \(s\) within \(c(g)\),
which holds by SCS-feasibility. If \(k \ge 1\), then the right-hand side of
\eqref{eq:stability_bigM} equals \(V(s) - M_s k \le V(s) - M_s \le 0\),
while every SCS-encoding point satisfies \(v_i \ge V(\{i\}) \ge 0\) for
each player through singleton stability within the player's own coalition,
so \(\sum_{i\in s} v_i \ge 0\).
(ii) With \(s \subseteq c(g)\) we have \(k = 0\), so
\eqref{eq:stability_bigM} reads \(\sum_{i\in s} v_i \ge V(s)\), violated by
assumption. \Halmos
\end{proof}

\begin{proof}{Proof of Proposition~\ref{prop:lifted_valid}.}
Fix a mixed-integer point of $\mathcal{F}^{\mathrm{stab}}$ and a group $g$,
and let $k := \sum_{i\in s} z_{ig} \in \{0,\dots,|s|\}$. If $k = |s|$, then
$s \subseteq c(g)$ and the right-hand side of \eqref{eq:lifted_cut} equals
$\Delta(s) + \sum_{i\in s} V(\{i\}) = V(s)$, so \eqref{eq:lifted_cut} is
the Core inequality for $s$ within $c(g)$, which holds by SCS-feasibility.
If $k \le |s|-1$, then $k - |s| + 1 \le 0$ and $\Delta(s) \ge 0$ imply
\[
\Delta(s)\,(k - |s| + 1) + \sum_{i\in s} V(\{i\})\, z_{ig}
\;\le\; \sum_{i\in s} V(\{i\})\, z_{ig}
\;\le\; \sum_{i\in s} V(\{i\}),
\]
using $z_{ig} \le 1$ and $V(\{i\}) \ge 0$. Every player $i$ belongs to some
group $g'(i)$, and the singleton member of the stability
family~\eqref{eq:stability_bigM} for that group yields $v_i \ge V(\{i\})$.
Summing over $i \in s$ gives
$\sum_{i\in s} v_i \ge \sum_{i\in s} V(\{i\})$, which dominates the
right-hand side above. \Halmos
\end{proof}

\begin{proof}{Proof of Proposition~\ref{prop:lifted_dominance}.}
With $M_s = V(s)$ the right-hand side of \eqref{eq:stability_bigM} equals
$V(s)\bigl(\sum_{i\in s} z_{ig} - |s| + 1\bigr)$. Subtracting it from the
right-hand side of \eqref{eq:lifted_cut} and using
$V(s) = \Delta(s) + \sum_{i\in s}V(\{i\})$ gives
\[
\sum_{i\in s} V(\{i\}) \Bigl[\, z_{ig} - \Bigl(\textstyle\sum_{\ell \in s}
z_{\ell g} - |s| + 1\Bigr) \Bigr]
\;=\;
\sum_{i\in s} V(\{i\}) \Bigl[\, (|s|-1) - \textstyle\sum_{\ell \in s
\setminus \{i\}} z_{\ell g} \Bigr]
\;\ge\; 0,
\]
since each bracket is a sum of $|s|-1$ terms of the form
$1 - z_{\ell g} \ge 0$ and $V(\{i\}) \ge 0$. \Halmos
\end{proof}

\begin{proof}{Proof of Proposition~\ref{prop:heuristic_feasible}.}
\emph{Termination.} Each executed merge strictly decreases $|CS|$, so the loop
of Algorithm~\ref{alg:heuristic} runs at most $|N|-1$ times. Each Core check
terminates: for $|c| \le \tau$ it enumerates finitely many subcoalitions, and
for $|c| > \tau$ each round of row generation adds a subcoalition of $c$ not
generated before, of which there are finitely many. Both loops are additionally
capped by $T_1$, and Phase~2 by $T_2$.

\emph{The incumbent is an SCS at the end of initialization.} The singleton
partition with $\tilde v_i = V(\{i\})$ is an SCS by
Proposition~\ref{prop:ex_css}.

\emph{The invariant is maintained.} The incumbent is modified only in the last
step before the return, and only when a Core allocation has been obtained for
every coalition of $CS'$. Every $c \in \mathcal{P}$ other than a singleton
entered $\mathcal{P}$ only after \textsc{CoreNonempty}$(c)$ returned true, which
certifies that $(c,V_c)$ has a nonempty Core, and $CS' \subseteq \mathcal{P}$ is
a partition of $N$ because $\mathcal{P}$ contains the singletons, so
\eqref{eq:setpart} is feasible and its seeded solution is available even if
$T_2$ expires. The replacement therefore satisfies
$\sum_{i \in c}\tilde v_i = V(c)$ and
$\sum_{i \in s}\tilde v_i \ge V(s)$ for every $s \subseteq c$ and every
$c \in CS'$, which is the definition of an SCS. If the allocations are not all
obtained, the previous incumbent is returned unchanged.

\emph{Initialization is necessary.} An SCS requires
$\sum_{i \in s}\tilde v_i \ge V(s)$ for every subcoalition $s$ of every formed
coalition, and $s = \{i\}$ is such a subcoalition, so $V(\{i\})$ is needed for
every $i$ regardless of the partition returned.

No argument after initialization uses the size of $T_1$ or $T_2$: a smaller budget
yields a smaller pool, never an uncertified one, because an inconclusive check
is treated as a failure.

\emph{Cost.} Each executed merge reduces $|CS|$ by one, so the loop passes
through partitions of size $|N|, |N|-1, \dots, 2$ and considers at most
$\binom{k}{2}$ candidate pairs when $|CS| = k$. Summing,
$\sum_{k=2}^{|N|}\binom{k}{2} = \binom{|N|+1}{3}$, and each candidate receives at
most one Core check. A check on $|c| \le \tau$ enumerates the $2^{|c|}-2 \le
2^\tau - 2$ proper nonempty subcoalitions of $c$; above the threshold the
row-generation loop adds a distinct subcoalition each round and is bounded by
$T_1$. \Halmos
\end{proof}

\section{Extension to General MILP-Based CIPGs}\label{ss:general_milp}

Sections~\ref{s:cipg} and~\ref{s:formulation} were written for the resource
allocation game of Section~\ref{ss:player_coalition_IP}. With the model and its
cut families in place we can say exactly how much of that structure they use.

Two components use none of it. The stability inequalities
\eqref{eq:stability_bigM} and \eqref{eq:lifted_cut} reference the instance only
through the values $V(s)$ and $V(\{i\})$, so
Propositions~\ref{prop:stability_valid}, \ref{prop:lifted_valid} and
\ref{prop:lifted_dominance} and Theorem~\ref{thm:CSS_projection} hold verbatim
for any CIPG in the sense of Definition~\ref{def:cipg}. The formulation
comparison of Section~\ref{ss:disagg} involves only the assignment and payoff
variables, never the underlying player variables, so
Proposition~\ref{prop:disagg_LP} is unchanged.

The rest are written in the rows of that model and must be restated rather
than inherited; we go component by component. Throughout, consider CIPGs
in which each player solves a bounded MILP in the sense of
Definition~\ref{def:cipg}, with designated poolable (budget-type) rows and
private structural rows.

\begin{itemize}
\item \emph{OCS formulations.} Introduce group-indexed copies of each
player's variables and aggregate the right-hand sides of the poolable rows
across the members assigned to a group, exactly as in~\eqref{eq:OCS};
private rows are gated by the assignment variables as in~\eqref{eq:OCS}.
The McCormick and disaggregated linearizations, and hence
Proposition~\ref{prop:disagg_LP}, carry over unchanged, because they involve only the assignment and payoff
variables---never the underlying player variables.
\item \emph{Stability inequalities.} The basic cut~\eqref{eq:stability_bigM}
and the lifted cut~\eqref{eq:lifted_cut} reference the instance only
through the values $V(s)$ and $V(\{i\})$, so their validity
(Propositions~\ref{prop:stability_valid} and~\ref{prop:lifted_valid}) is
independent of how those values are generated.
\item \emph{Separation.} The single-level reformulation of the separation
MIP~\eqref{eq:sep_single_level} requires only that the subset-selection
variables enter the inner coalition problem through right-hand sides and
variable bounds, and that player variables are bounded---both hold for the
general model.
\item \emph{Heuristic and refinement.} The SCS-feasible heuristic and the
Shapley/nucleolus refinement access the instance solely through a $V(c)$
oracle and small LPs.
\end{itemize}

Consequently, instantiating the framework for another MILP class (e.g.,
facility-location or assignment-type games) changes only the inner
coalition-level MILPs, not the formulation, the cuts, or the
algorithms; the CKG of Section~\ref{s:comp_results} is one such
instantiation.

\begin{remark}[Relation to integer minimization games]
\label{rem:im_games}
Under full pooling the resource allocation game of
Section~\ref{ss:player_coalition_IP} fits the maximization mirror of the
(mixed-integer) \emph{integer minimization games} of \citet{caprara2010cost}:
the rows are fixed, and the subset enters only through right-hand sides and
variable bounds that are linear in its incidence vector, which is exactly the
condition under which the separation MIP \eqref{eq:sep_single_level} exists.
The proof of Proposition~\ref{prop:superadd_natural} mirrors that of their
Observation~1, the automatic subadditivity of the class. Restricted pooling
leaves the class: the common-or-individual designation makes the presence of
the $\alpha$-row depend on the subset itself, which is what the indicators
$w^g_j$ and $\kappa_j$ encode, and it is also what makes the coalition
structure a nontrivial decision. The two frameworks also answer an empty Core
differently: $\gamma$-budget-balanced cost shares relax efficiency for a
fixed grand coalition, while an SCS restores exact efficiency by refining the
partition; the two relaxations are orthogonal and could be combined.

One further transfer is useful in computation. The mirror of their
Observation~7 states that, within a formed coalition, the Core inequality for
a subset $s$ is redundant whenever $s$ splits into $s_1, s_2$ with
$V(s_1) + V(s_2) \ge V(s)$, since summing the inequalities for $s_1$ and
$s_2$ dominates it. The subset enumerations of Section~\ref{sec:separation}
and the Core checks of Algorithm~\ref{alg:heuristic} therefore need only the
\emph{indecomposable} subsets, and in the gated family
\eqref{eq:stability_bigM} the same reduction is valid at integer incumbents
in the presence of the singleton cuts: with the gate closed the summed cuts
for $s_1$ and $s_2$ dominate, and with it open the right-hand side is
nonpositive.
\end{remark}

\clearpage
\section{Complexity of the Stability Problems}\label{app:complexity}

A CIPG instance is the compact data of Section~\ref{ss:player_coalition_IP}; the
$2^{|N|}-1$ coalition values are not part of the input but are generated from
it, and that encoding is the source of the difficulty. When the values are
supplied explicitly, an optimal coalition structure follows by dynamic
programming over subsets in $O(3^{|N|})$ time
\citep{sandholm1999coalition,rahwan2015coalition}, polynomial in the size of
that exponential input; this is the baseline of Section~\ref{ss:res_cost}. We assume the data are rational and every
coalition program feasible and bounded.

\begin{proposition}[Evaluation and partition selection]
\label{prop:ocs-npc}
Deciding $V(c) \ge K$ is \textup{NP}-complete, and so is deciding whether some
coalition structure satisfies $\sum_{c \in CS(N)} V(c) \ge K$.
\end{proposition}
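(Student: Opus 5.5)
The plan has two parts, membership in NP and NP-hardness, and both decision problems reduce to or from the 0--1 knapsack problem.

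\emph{Membership.} For the first problem, a certificate is a feasible solution $(x^c,y^c)$ of \eqref{eq:coal_IP_general} with objective at least $K$. Every variable is bounded by the rational data $u^i_j$ and $\alpha\sum_i u^i_j$, so a feasible point can be chosen with polynomially bounded encoding. The integer variables are bounded by these finite bounds. The continuous split of the common items can be taken integral at the optimum, as argued after \eqref{eq:link_y}. If private rows $B^ix^i\le d^i$ are present, one appeals to the standard fact that bounded MILPs have optimal vertex solutions of polynomial size. Checking the budget, linking, bound and private rows is then linear arithmetic, so the problem lies in NP. For the second problem, the certificate is a partition $CS(N)$ together with one such feasible solution per block whose objectives sum to at least $K$. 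At most $|N|$ blocks appear, so the certificate is polynomial and verification is polynomial.

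\emph{Hardness.} I will reduce from 0--1 knapsack: given items with integer weights $a_j$, profits $p_j$, capacity $b$ and target $K$, decide whether some subset fits and earns at least $K$. Take a single player $N=\{1\}$ with $I_j=\{1\}$, $p^1_j=p_j$, $u^1_j=1$, $b^1=b$, no private rows, and any $\alpha$. Then $c=\{1\}$ has $R^c_{\mathrm{com}}=\emptyset$, and \eqref{eq:coal_IP_general} is exactly that knapsack, so $V(\{1\})\ge K$ if and only if the knapsack instance is a yes-instance. With one player the only coalition structure is $\{\{1\}\}$, so the same construction settles the partition problem as well.

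The paper's CKG fixes $a_j\in\{5,\dots,10\}$, but the proposition concerns the general model with arbitrary rational data, and I will state this explicitly. With weights bounded by a constant, knapsack would be pseudo-polynomially easy. The main obstacle is therefore not the reduction but the membership argument in the presence of arbitrary private rows. I would first handle it by invoking the polynomial-size optimal-solution bound for bounded MILPs. If that is considered too heavy, I would restrict to the knapsack-form instances of the paper, where all variables are bounded integers with explicit bounds and the continuous parts are integral at optimality.
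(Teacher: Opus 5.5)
Your proposal is correct and follows the paper's proof: membership by guessing one feasible plan (one per block for the partition version, since a feasible plan can only understate $V(c)$), and hardness from 0--1 \textsc{Knapsack} with a single player, which settles both problems at once because $\{\{1\}\}$ is the only coalition structure. Your argument that the certificate has polynomial encoding length (integer variables box-bounded by the input $u^i_j$, continuous parts taken at a vertex of the remaining LP) supplies a step the paper leaves implicit.
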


\begin{proof}{Proof.}
For $V(c) \ge K$, guess a plan for $c$ and verify its value; hardness is
\textsc{Knapsack} with $|c| = 1$. For the partition problem, guess a coalition
structure and one plan per block: a structure has at most $|N|$ blocks, and an
understated coalition value can only understate the objective, so no guessed
plan need be optimal. Hardness again follows with $|N| = 1$. \Halmos
\end{proof}

Membership is the informative half: one \emph{feasible} plan per block
certifies the bound, and no guess need be optimal. That property fails once stability is imposed, and it is why
Section~\ref{ss:OCS_formulation} can be a compact mixed-integer program while
Section~\ref{ss:stability_CSS} cannot.

\begin{proposition}[Separation]
\label{prop:sep-npc}
Deciding whether some $s \subseteq c$ satisfies
$\sum_{i \in s} \bar v_i < V(s)$ is \textup{NP}-complete.
\end{proposition}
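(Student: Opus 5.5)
Membership and hardness are handled separately, following Proposition~\ref{prop:ocs-npc}.

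\emph{Membership in NP.} The certificate is a subset $s \subseteq c$ together with one feasible plan $(x^s,y^s)$ for the coalition program \eqref{eq:coal_IP_general} of $s$ whose objective value exceeds $\sum_{i\in s}\bar v_i$. Since $V(s)$ is the maximum, a feasible plan of value $\pi > \sum_{i\in s}\bar v_i$ proves $V(s) > \sum_{i\in s}\bar v_i$. Conversely, whenever a violation exists, an optimal plan for $s$ is such a witness. Verification recomputes the sets \eqref{eq:item_sets} for $s$, checks the constraints including the $\alpha$-row on common items, and evaluates the objective, all in polynomial time. The one technical point is the size of the certificate. Because $u$ is part of the data, each $x^i_j$ and $y_j$ is bounded by data in binary, so the plan has polynomial encoding length. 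Equivalently, one could cite the single-level MIP \eqref{eq:sep_single_level} of Proposition~\ref{prop:sep_equiv} together with the standard fact that bounded integer programs with rational data have polynomially sized optimal solutions.

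\emph{Hardness.} I reduce from \textsc{Knapsack} exactly as in Proposition~\ref{prop:ocs-npc}. Take a 0--1 knapsack instance (items with profits $p_j$, weights $a_j$, capacity $b$, target $K$) and build a one-player CIPG $N=c=\{1\}$ with $I_j=\{1\}$, $u^1_j=1$, $b^1=b$, $p^1_j=p_j$, and no private rows. Set $\bar v_1 = K - 1/2$, or, to keep integrality, scale all profits by $2$ and set $\bar v_1 = 2K-1$. The only nonempty subset is $s=\{1\}$, and the empty set never violates because $V(\emptyset)=0$. Hence a violating $s$ exists iff $V(\{1\}) > \bar v_1$ iff the knapsack attains profit at least $K$. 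With a single player, $R_{\mathrm{com}}$ is empty and $\alpha$ is irrelevant, so the reduction holds in every pooling regime.

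\emph{Main obstacle.} The main obstacle is whether the statement intends $s$ to range over proper subsets, or over $s$ with $\bar v$ a Core-candidate (efficient) payoff. If so, the single-player gadget degenerates. In that case I would add a dummy player $2$ with zero budget and no items and set $c=\{1,2\}$. Then $V(\{2\})=0$ and $V(\{1,2\})=V(\{1\})$, and I would choose $\bar v_2$ so that $s=\{2\}$ and $s=c$ cannot violate, for example $\bar v_2$ large. With that choice, violation again reduces to the proper subset $\{1\}$.
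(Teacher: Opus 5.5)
Your proof is correct and follows the paper's. Membership comes from guessing $s$ together with one feasible plan whose value exceeds $\sum_{i\in s}\bar v_i$, and hardness comes from \textsc{Knapsack} with $|c|=1$; your choice $\bar v_1 = K-\tfrac12$ and your encoding-length remark just make explicit what the paper leaves implicit. Your closing aside is not needed for the statement as written. For the efficient-payoff variant, however, the zero-budget dummy would not work: a large $\bar v_2$ breaks efficiency, and $V(c)=V(\{1\})$ cannot be computed by the reduction. That is why the gadget in Proposition~\ref{prop:scs-check-dp} instead gives player~$2$ a budget large enough to make $V(c)$ computable by inspection.
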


\begin{proof}{Proof.}
Guess $s$ together with a plan for $s$ whose value exceeds
$\sum_{i \in s} \bar v_i$; a feasible plan suffices, since exhibiting one proves
$V(s) > \sum_{i \in s} \bar v_i$. Hardness is \textsc{Knapsack} with
$|c| = 1$. \Halmos
\end{proof}

Separation of the stability family is therefore not a subroutine a better
implementation would render polynomial, and its natural certificate---a subset
with a plan that beats the incumbent's payoff---is exactly the pair returned by
\eqref{eq:sep_single_level}.

\begin{proposition}[Verification and membership]
\label{prop:scs-check-dp}
Deciding whether a given pair $(CS(N), v)$ is an SCS lies in \textup{DP} and is
hard for both \textup{NP} and \textup{coNP}; it therefore lies in neither class
unless $\textup{NP} = \textup{coNP}$. Deciding whether a given $v$ admits some
SCS is likewise \textup{coNP}-hard.
\end{proposition}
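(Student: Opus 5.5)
The plan has three parts: place SCS verification in \textup{DP}, prove \textup{NP}- and \textup{coNP}-hardness by reductions from \textsc{Knapsack} (reusing the constructions of Propositions~\ref{prop:ocs-npc} and~\ref{prop:sep-npc}), and finish with the standard consequence for the class picture. The membership claim about arbitrary $v$ gets its own coNP reduction.

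\textbf{Membership in \textup{DP}.} Write the language as $L = L_1 \cap L_2$. Let $L_1$ be the set of pairs for which every block $c \in CS(N)$ satisfies $V(c) \ge \sum_{i\in c} v_i$. Let $L_2$ be the set of pairs for which no block has a subset $s$ with $\sum_{i\in s} v_i < V(s)$.
\begin{itemize}
\item $L_1 \in \textup{NP}$: guess one feasible plan per block whose value reaches $\sum_{i\in c} v_i$. This gives efficiency, because $\sum_{i\in c} v_i \le V(c)$ combined with the $s=c$ stability inequality yields equality.
\item $L_2 \in \textup{coNP}$: its complement is the separation problem of Proposition~\ref{prop:sep-npc}, taken over the polynomially many blocks. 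That problem is in NP because a violating $s$ together with a plan is a witness.
\end{itemize}
Hence $L \in \textup{DP}$.

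\textbf{Hardness.} Use $|N|=1$ with a single knapsack player.
\begin{itemize}
\item \emph{NP-hardness}: the pair $(\{\{1\}\}, v_1 = K)$ is an SCS iff $V(\{1\}) = K$. Since $K \le V$ is the NP-hard side, I reduce knapsack ``$V \ge K$'' to this. To get the exact value I would first rescale profits, e.g.\ multiply by $2$ and add an optional item of profit $1$, so that $V \ge K$ becomes an exact-value question. A cleaner alternative uses two players: player 1 is a knapsack with $V(\{1\}) \ge K$ to test, player 2 is dummy, and $V(\{1,2\})$ is designed so that efficiency is what is checked.
\item \emph{coNP-hardness}: the pair $(\{\{1\}\}, v_1 = K)$ with $K$ an upper bound certificate. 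It is an SCS iff $V(\{1\}) \le K$, provided I also ensure $V(\{1\}) \ge K$. That can be arranged by adding an always-available item of profit exactly $K$ that uses the whole budget exclusively; then $V = \max(K, \text{knapsack value})$ and the SCS question becomes ``knapsack value $\le K$'', which is coNP-complete.
\item The NP side uses the symmetric trick: make the value at most $K$, or lower-bound it by a fixed item so the only question is whether $V(c) \ge$ the payoff.
\end{itemize}

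\textbf{Consequence and the second claim.} If $L$ were in NP, coNP-hardness would put coNP in NP, so NP $=$ coNP; the symmetric argument handles $L \in \textup{coNP}$. For ``given $v$ admits some SCS'', use the same single-player instance. With $|N|=1$ the only partition is $\{\{1\}\}$, so the coNP-hardness reduction transfers verbatim.

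\textbf{Main obstacle.} The delicate step is making the reductions respect the standing assumptions: zero plan feasible, nonnegative profits, and $u^i_j$ bounds with integrality. The fixed item must genuinely exclude the others. I would handle this by giving it weight equal to the budget and scaling the other weights so that any nonempty knapsack cannot also fit it.
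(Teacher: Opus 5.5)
Your \textup{DP} membership argument is the paper's. Your \textup{coNP}-hardness is correct by a different and simpler route. Take a single player and add an item of weight $b^1$ and profit $K$. Since $a_j > 0$, that item is exclusive, so no rescaling of the other weights is needed. Then $V(\{1\}) = \max\{K, \text{knapsack optimum}\}$, so $(\{\{1\}\}, v_1 = K)$ is an SCS iff the knapsack optimum is at most $K$. Because $|N| = 1$ admits only one partition, the same instance also settles the claim about $v$ admitting some SCS.

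The paper instead builds a two-player gadget: all items are available only to player~1, player~2 holds only a large budget $b^2$, and $v = (K - \tfrac12,\, P - K + \tfrac12)$. There efficiency holds by construction and only the stability inequality for $\{1\}$ is in doubt. It must then rule out the split $\{\{1\},\{2\}\}$ separately, via $v_2 \ge \tfrac12 > 0 = V(\{2\})$. Your version avoids that extra case.

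The gap is the \textup{NP}-hardness. You correctly note that $(\{\{1\}\}, v_1 = K)$ is an SCS iff $V(\{1\}) = K$. So you need a hard family in which $V(\{1\}) \le K$ holds automatically while deciding $V(\{1\}) \ge K$ remains \textup{NP}-hard. None of your suggestions supplies it:
\begin{itemize}
\item Doubling the profits and adding an item of profit $1$ does not turn the one-sided question into a two-sided one, since nothing bounds the new value from above.
\item The two-player variant in which ``efficiency is what is checked'' needs the stability inequality for $s = c$, namely $v_1 + v_2 \ge V(\{1,2\})$, to hold automatically. That is the same missing cap.
\item There is no symmetric version of your fallback-item trick. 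Adding items can only raise $V$, and lower-bounding $V$ by a fixed item leaves the question $V \le K$, which is the \textup{coNP} direction.
\end{itemize}

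The missing idea is to reduce from \textsc{SubsetSum} rather than general \textsc{Knapsack}. Set $p^1_j = a_j$, $u^1_j = 1$, and $b^1$ equal to the target $K$. Every feasible plan then earns exactly its weight, hence at most $K$, so $V(\{1\}) = K$ iff some subset hits the target exactly. This is the paper's reduction, and with it your proof goes through.
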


\begin{proof}{Proof.}
\emph{Membership.} The pair is an SCS iff for every $c \in CS(N)$: (a)
$\sum_{i \in c} v_i = V(c)$, and (b) $\sum_{i \in s} v_i \ge V(s)$ for all
$s \subseteq c$. Condition (b), over at most $|N|$ blocks, is the complement of
the separation problem and lies in \textup{coNP} by
Proposition~\ref{prop:sep-npc}. Given (b), the case $s = c$ supplies
$\sum_{i \in c} v_i \ge V(c)$, so what remains of (a) is the residual assertion
$V(c) \ge \sum_{i \in c} v_i$, a polynomial conjunction of value queries and
hence in \textup{NP}. The problem is the intersection of a \textup{coNP} and an
\textup{NP} language, which is the class \textup{DP}.

\emph{Hardness for \textup{NP}.} Take $|N| = 1$ with no private constraints and
every item available, so \eqref{eq:coal_IP_general} is a $0$--$1$ knapsack, and
set $v_1 = K$. Then $(\{\{1\}\}, v)$ is an SCS iff $V(\{1\}) = K$; reduce from
\textsc{SubsetSum}, taking profits equal to weights and capacity equal to the
target.

\emph{Hardness for \textup{coNP}.} An instance must supply an efficient $v$,
which presupposes $V(c)$, so we build a gadget in which $V(c)$ is computable by
inspection while a proper subcoalition's value is not. Given a knapsack instance
with capacity $B$, integer profits and target $K$, take $N = \{1,2\}$ and
$c = \{1,2\}$ with every item available to player~$1$ only, so $I_j = \{1\}$ and
$R^c_{\mathrm{com}} = \emptyset$, and player~$2$ holding no items. Set
$b^1 = B$ and $b^2$ large enough that
$b^1 + b^2 \ge \sum_{j \in R} a_j u^1_j$. Then
$V(c) = P := \sum_{j \in R} p^1_j u^1_j$, $V(\{2\}) = 0$, and $V(\{1\})$ is the
knapsack optimum at capacity $B$. Assume $K \le P$ and put
$v = (K - \tfrac12,\ P - K + \tfrac12)$, so $v_1 + v_2 = P = V(c)$ and
efficiency holds by construction. Of the subsets of $c$: $s = \{2\}$ requires
$v_2 \ge 0$, which holds; $s = c$ holds with equality; and $s = \{1\}$ requires
$K - \tfrac12 \ge V(\{1\})$, that is $V(\{1\}) < K$, the profits being integers.
Hence $(\{c\}, v)$ is an SCS iff $V(\{1\}) < K$.

\emph{Hardness of membership.} In the same gadget the only coalition structures
are $\{\{1,2\}\}$ and $\{\{1\},\{2\}\}$; the second requires
$v_2 = V(\{2\}) = 0$ while $v_2 \ge \tfrac12$, so it is never an SCS. Therefore
$v$ admits an SCS iff $V(\{1\}) < K$. \Halmos
\end{proof}

Stability alone---condition (b)---is \textup{coNP}-complete by
Proposition~\ref{prop:sep-npc}. Efficiency alone is \emph{not} in \textup{NP}:
$\sum_{i \in c} v_i = V(c)$ is an exact-optimum assertion. Only the residual
assertion, once (b) has supplied one inequality, is an \textup{NP} question, and
it is the conjunction that leaves both classes. This is why
Section~\ref{sec:separation} verifies time-limit incumbents by re-solving $V(c)$
for every formed coalition rather than reading efficiency off the model:
\eqref{eq:OCS_efficiency} gives only
$\sum_{i \in c(g)} v_i = \pi_g \le V(c(g))$, and closing that to equality is the
\textup{NP} half.

\begin{corollary}[No compact description of $\mathcal{S}$]
\label{cor:no-compact}
Suppose there is a polynomial-time algorithm that, given an instance,
constructs a system in the payoff variables $v$ and auxiliary variables $w$,
of size polynomial in the input, with polynomial-time checkable constraints,
with every $v \in \mathcal{S}$ the projection of some $w$ of polynomial
encoding length, and whose projection onto $v$ is exactly $\mathcal{S}$. Then
$\textup{NP} = \textup{coNP}$.
\end{corollary}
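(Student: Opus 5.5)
The plan is to show that such a system would give an NP certificate for the coNP-hard membership problem of Proposition~\ref{prop:scs-check-dp}. That problem asks whether a given $v$ admits some SCS. Since it is coNP-hard, placing it in NP forces $\textup{coNP} \subseteq \textup{NP}$, and hence $\textup{NP} = \textup{coNP}$.

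First, fix the hypothesized polynomial-time constructor. On an instance together with a query vector $v$, run it to obtain the system in $(v,w)$ of polynomial size. By hypothesis, $v \in \mathcal{S}$ exactly when there is some $w$ of polynomial encoding length such that $(v,w)$ satisfies the system. The NP verifier guesses such a $w$ and checks every constraint in polynomial time. This takes polynomial time because the system has polynomially many constraints and each is polynomial-time checkable.

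Soundness and completeness both come from the projection identity $\operatorname{proj}_v = \mathcal{S}$. If $v \in \mathcal{S}$, a short witness $w$ exists by the encoding-length hypothesis. If $v \notin \mathcal{S}$, no $w$ at all satisfies the system, so every guess is rejected. Hence deciding whether $v$ admits an SCS lies in NP. Proposition~\ref{prop:scs-check-dp} shows this problem is coNP-hard via a polynomial reduction from the complement of \textsc{Knapsack}. Therefore every coNP language reduces to an NP language, so $\textup{coNP} \subseteq \textup{NP}$. By taking complements, $\textup{NP} \subseteq \textup{coNP}$ as well, and the two classes are equal.

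The main obstacle is making sure the gadget in Proposition~\ref{prop:scs-check-dp} fits the corollary's input format. The query $v = (K-\tfrac12, P-K+\tfrac12)$ is rational with polynomial encoding length, and the instance itself is compact CIPG data. So the constructor can be applied to the gadget instance, and the query $v$ can be fed to the verifier alongside it. One also has to note that the hypothesis bounds the witness length for all $v \in \mathcal{S}$. It is not enough that some witness exists, because the projection of the system might in principle contain points only via witnesses of exponential length. The corollary assumes this case away explicitly, and the argument relies on that assumption.
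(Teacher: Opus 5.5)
Your proposal is correct and is essentially the paper's proof: the hypothesized system puts the question ``does $v$ admit an SCS'' in NP by guessing $w$ and checking the constraints, and the coNP-hardness from the last part of Proposition~\ref{prop:scs-check-dp} then yields $\textup{coNP} \subseteq \textup{NP}$. You also spell out the complementation step that gives $\textup{NP} = \textup{coNP}$, which the paper leaves implicit.
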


\begin{proof}{Proof.}
Under the hypothesis, deciding $v \in \mathcal{S}$ lies in \textup{NP}:
construct the system, guess
$w$ and check the constraints. By the last part of
Proposition~\ref{prop:scs-check-dp} that decision problem is
\textup{coNP}-hard, so $\textup{coNP} \subseteq \textup{NP}$. \Halmos
\end{proof}

Theorem~\ref{thm:CSS_projection} describes $\mathcal{S}$ exactly, but only
through a family of inequalities that cannot be written down in advance.
Corollary~\ref{cor:no-compact} says this is not an artifact of our encoding.
Because it permits auxiliary variables and bounds the total size, it forbids
every uniformly constructible polynomial-size exact formulation, not only
those in the payoff variables:
exactness cannot be bought by trading inequalities for variables. What it does
not exclude is $\mathcal{F}^{\mathrm{stab}}$ itself, which uses polynomially
many variables and exponentially many inequalities.
What creates the gap is stability rather than integrality; the same obstruction
appears for worth functions computable in polynomial time, where core membership
over coalition structures is already \textup{coNP}-complete.

\begin{proposition}[Optimization over stable structures]
\label{prop:oscs-sigma2}
Deciding whether there is an SCS $(CS(N), v)$ with $\sum_{i \in N} v_i \ge K$
lies in $\Sigma_2^p$ and is \textup{NP}-hard.
\end{proposition}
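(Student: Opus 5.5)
For membership in $\Sigma_2^p$ I would write the question as an $\exists\forall$ statement with polynomially bounded witnesses. The existential player guesses a coalition structure $CS(N)$, a payoff vector $v$, and one feasible plan per block whose value $\pi_c$ satisfies $\sum_{i \in c} v_i \le \pi_c$. This gives the residual efficiency half, $V(c) \ge \sum_{i\in c} v_i$, exactly as in the membership part of Proposition~\ref{prop:scs-check-dp}. The universal player ranges over pairs $(s,\text{plan for } s)$ with $s \subseteq c$ for some $c \in CS(N)$. The predicate checked in polynomial time is: if the plan is feasible for $s$, then its value is at most $\sum_{i\in s} v_i$. Together with the existential part this encodes condition (b), and the case $s=c$ closes efficiency to equality. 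Finally I check $\sum_i v_i \ge K$.

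The one technical point is that $v$ must have polynomial encoding length. I would argue this in either of two ways. First, I could restrict $v$ to rationals of size polynomial in the input: for a fixed structure the feasible payoff set is a polytope whose defining data $V(s)$ have polynomial size, being optimal values of bounded integer programs with rational data, so a vertex maximizing $\sum v_i$ has polynomial size. Alternatively, and more cleanly, I could avoid $v$ altogether. Any SCS has $\sum_i v_i = \sum_{c} V(c)$, so it suffices to guess the structure and plans certifying $\sum_c V(c)\ge K$, and to certify Core nonemptiness of each block. That last step is again an $\exists\forall$ statement: there exists a vertex of the Core polytope of $(c,V_c)$ such that for all subsets $s$ and all plans the inequality holds. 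Either way the vertex-size bound is the step needing care, and I would cite standard bounds on integer program optima and on vertices of rational polytopes.

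For NP-hardness, I reduce from \textsc{Knapsack} with $|N|=1$, exactly as in Proposition~\ref{prop:ocs-npc}. With one player the only structure is $\{\{1\}\}$, and the unique SCS payoff is $v_1 = V(\{1\})$, which exists by Proposition~\ref{prop:ex_css}. So an SCS with total at least $K$ exists iff $V(\{1\}) \ge K$, which is the knapsack decision problem. The main obstacle is the encoding-length argument in the membership part; the hardness part is immediate.
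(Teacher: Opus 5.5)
Your proposal is correct and matches the paper's proof. In both, one guesses a coalition structure together with a vertex Core payoff on each block. That vertex is polynomially encodable by Cramer's rule and the Hadamard bound, once one notes, as the paper does explicitly, that $w_i \ge V(\{i\}) \ge 0$ and $w_i \le V(c)$ make each Core a bounded polytope. One then verifies the subcoalition inequalities as a coNP condition and efficiency by an exhibited plan, and obtains hardness from \textsc{Knapsack} with $|N|=1$.

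The only difference is packaging. The paper invokes the DP verification of Proposition~\ref{prop:scs-check-dp} to place the problem in $\exists\cdot\mathrm{P}^{\mathrm{NP}} = \Sigma_2^p$. You instead fold the NP half into the existential block and write the $\exists\forall$ formula out directly.
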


\begin{proof}{Proof.}
\emph{Membership.} For each $c \in CS(N)$ the restriction of $v$ to $c$ lies in
the Core of $(c, V_c)$, the polyhedron
$\{ w : \sum_{i \in c} w_i = V(c),\ \sum_{i \in s} w_i \ge V(s)\ \forall\,
\emptyset \ne s \subsetneq c \}$. Singleton stability gives
$w_i \ge V(\{i\}) \ge 0$ and efficiency gives $w_i \le V(c)$, so the polytope is
bounded and nonempty and has a vertex, which solves a nonsingular
$|c| \times |c|$ subsystem with $0$--$1$ coefficients and coalition values as
right-hand sides. By Cramer's rule and the Hadamard bound the vertex is
polynomially encodable. Replacing $v$ on each block by such a vertex preserves
the structure and the total payoff, so the witness is polynomially balanced, and
verifying it means one instance of Proposition~\ref{prop:scs-check-dp}, which
lies in $\textup{DP} \subseteq \textup{P}^{\textup{NP}}$. Hence the problem lies
in $\exists \cdot \textup{P}^{\textup{NP}} = \Sigma_2^p$.

\emph{Hardness.} Take $|N| = 1$; the only SCS payoff is $v_1 = V(\{1\})$.
\Halmos
\end{proof}

We do not settle the exact complexity of the OSCS problem and regard
$\Sigma_2^p$-completeness as the natural conjecture;
Corollary~\ref{cor:no-compact} does not depend on it. The obstruction is that
the $\Delta_2^p$-hardness constructions for core problems over coalition
structures assume a polynomial-time worth function, and in that regime the
second level is not reached.

\begin{remark}[Where the difficulty lies on our instances]
\label{rem:not-the-knapsack}
Proposition~\ref{prop:ocs-npc} is not a statement about the instances of
Section~\ref{s:comp_results}. With $a_j \sim U\{5,10\}$ the coalition programs
are easy: Table~\ref{tab:bruteforce} reports $262{,}143$ values in $2{,}126.9$
seconds at $n = 18$, $\alpha = 1.0$, about $8$ milliseconds each, and
$3{,}628.4$ seconds at $\alpha = 0.5$, about $14$ milliseconds each. The gain in
Section~\ref{ss:res_cost} comes from evaluating $766$ values rather than
$10^9$, not from evaluating any one faster.
\end{remark}

\begin{remark}[Why fission stability, and not the coalition structure core]
\label{rem:fusion-warning}
Our notion allows only subsets of a \emph{formed} coalition to block. The
coalition structure core \citep[Def.~3.31]{koczy2018partition} instead requires
$\sum_{i \in s} v_i \ge V(s)$ for every $s \subseteq N$. Under that definition
the question this paper asks does not arise. Let $(CS(N), v)$ satisfy it.
Efficiency on each block gives
$\sum_{i \in N} v_i = \sum_{c \in CS(N)} V(c)$, while applying the requirement
blockwise to an arbitrary partition $\mathcal{Q}$ gives
$\sum_{i \in N} v_i = \sum_{T \in \mathcal{Q}} \sum_{i \in T} v_i \ge
\sum_{T \in \mathcal{Q}} V(T)$. Taking the maximum over $\mathcal{Q}$,
\begin{equation}
  \sum_{c \in CS(N)} V(c) \;=\; \sum_{i \in N} v_i
  \;\ge\; \max_{\mathcal{Q}} \sum_{T \in \mathcal{Q}} V(T),
  \label{eq:csc_forces_ocs}
\end{equation}
so $CS(N)$ is already an optimal coalition structure. Under the coalition
structure core every stable structure is welfare-maximal by definition, and the
gap between an OCS and an OSCS---the quantity Section~\ref{ss:res_solutions}
measures---is identically zero.

The stronger notion is also frequently empty here. Call $V$ \emph{cohesive} if
$\sum_{c \in CS(N)} V(c) \le V(N)$ for every partition; superadditivity implies
cohesion. Under cohesion \eqref{eq:csc_forces_ocs} and the requirement at
$s = N$ give $\sum_{i \in N} v_i = V(N)$, so $v$ lies in the Core of the grand
coalition. Under full pooling Proposition~\ref{prop:superadd_natural} makes $V$
superadditive, and at $n = 8$, $\alpha = 1.0$ our certified runs report an
optimal coalition structure of value $5{,}609$ against a best stable structure
of $5{,}606$: no stable structure attains $V(N)$, so the grand coalition's Core
is empty and with it the coalition structure core.

Fission stability is what makes an SCS always exist
(Proposition~\ref{prop:ex_css}) and the price of stability a computable
quantity rather than a definitional zero. The cost is that our structures are
not protected against a coalition spanning two blocks, which is the fusion direction
we leave open.
\end{remark}

\section{A Global Bound on the Objective}\label{app:objbound}

The root-bound margin of Section~\ref{ss:res_formulation} compares
$\mathrm{AGG}_\alpha$ and $\mathrm{DIS}_\alpha$ as stated. A single inequality
narrows it. Since $\sum_{i \in N} v_i = \sum_{g \in G} \pi_g$ by
\eqref{eq:OCS_partition}--\eqref{eq:OCS_efficiency}, since
$\pi_g \le V_\alpha(c(g)) \le V_{\alpha=1}(c(g))$, and since $V_{\alpha=1}$ is
superadditive by Proposition~\ref{prop:superadd_natural},
\begin{equation}
  \sum_{i \in N} v_i \;\le\; V_{\alpha=1}(N)
  \label{eq:obj_bound}
\end{equation}
is valid for both formulations at every $\alpha$, at the cost of one
coalition-value solve. At $n = 4$ it takes the aggregated root bound from
$4.5$ times the integer optimum to $1.02$ at $\alpha = 0.5$ and to the optimum
itself at $\alpha = 1.0$, which is tighter than the disaggregated
root bound.

We do not use it, for two reasons. At $\alpha = 1.0$ the bound is not
informative: Proposition~\ref{prop:superadd_natural} already identifies the
grand coalition as an OCS, so computing $V_{\alpha=1}(N)$ solves the problem
that \eqref{eq:obj_bound} is meant to help solve. And bounding the objective
directly leaves the relaxation tight at every node of the tree, which removes
the differences in bound that branching uses to choose variables; node counts
rose whenever we added it. The informative case is $\alpha < 1$ and the OSCS
problem, where the optimum lies strictly below $V_{\alpha=1}(N)$, and there
\eqref{eq:obj_bound} deserves a proper computational study.

\section{Additional Computational Results}\label{app:addl}

Table~\ref{tab:ablation_lifted} reports the lifted-versus-basic cut
ablation and Table~\ref{tab:SOCS_formulation} the AGG-versus-DIS
formulation effect for the OSCS methods; both are discussed in
Section~\ref{s:numerical}.
Table~\ref{tab:enum_lazy_DL} completes the ten configurations of
Experiment~1 by reporting the cutplane and lazy runs under the disaggregated
formulation.
Table~\ref{tab:heur_diag} reports per-instance diagnostics of the
SCS-feasible heuristic (phase times, Core checks, pool sizes, and the
seeded partition), complementing the Heur and HGap columns of
Table~\ref{tab:SOCS_methods}.
Table~\ref{tab:refinement_summary} summarizes the payoff refinement of
Section~\ref{ss:coalition_refinement} by coalition size.
Table~\ref{tab:bruteforce} reports the comparison with the exhaustive
baseline of Section~\ref{s:numerical}.
\paragraph{Exhaustive baseline: detailed comparison.}
In Table~\ref{tab:bruteforce}, ``Evaluations (ours)'' counts the distinct
coalition values computed by heuristic plus solver (one shared cache),
``Time, ours'' is the wall-clock time of lazy\_MIP\_ws (AGG) for the
entire OSCS computation, and the BF columns report the baseline's two
stages ($V$-enumeration measured to $n = 18$, OCS to $n = 16$, OSCS
search to $n = 14$).
At small $n$ under full pooling the heuristic's Core checks alone touch
every subset, so the evaluation economy of Table~\ref{tab:bruteforce} is
an asymptotic phenomenon, emerging from $n \approx 10$ onward; brute force
scales as $\Theta(2^n)$ in value evaluations alone, already requiring over
2{,}000 seconds at $n = 18$ and an extrapolated $10^7$ seconds at
$n = 30$, whereas the lazy framework queries $V$ only where stability of a
candidate partition is actually at stake. For exact certification, the two
routes are in fact comparable through $n \approx 14$--$16$---brute force
is even competitive at $n = 16$, $\alpha = 1.0$, where its extrapolated
total of roughly $18$ minutes undercuts our time limit. The separation is
in scaling: the OSCS-search stage passes from hours at $n = 20$ to days at
$n = 22$ and to more than $25$ years at $n = 30$. The deeper difference is
that brute force is all-or-nothing: interrupted before completion, it
yields neither a usable incumbent nor a bound, whereas every incumbent
accepted by the lazy framework carries a certified optimality gap, and every
incumbent lazy\_MIP\_ws returned was verified SCS-feasible, so the pipeline
degrades gracefully as $n$ grows.
\paragraph{Lifted versus basic cuts: details.}
On the five double-timeout cells of the ablation, neither variant achieves
systematically smaller final gaps (each is smaller on some cells; all gaps
remain below $0.5\%$): since neither family tightens the root relaxation,
any advantage must arise from path-dependent branch-and-bound effects, and
none is systematic on these instances. The fallback counts are 0 of
342{,}585 added cuts at $\alpha = 1.0$---structural, since the full-pooling
bound makes $V$ superadditive and the synergy condition
$\Delta(s) \ge 0$ of Proposition~\ref{prop:lifted_valid} holds for every
subset---versus 25{,}954 of 41{,}346 ($62.8\%$) at $\alpha = 0.5$, both
aggregated over all lazy, lazy\_MIP, and lazy\_MIP\_ws runs of the two
experiments, where restricted bounds create subadditive subsets.
\paragraph{Formulation effect: details.}
In the $\dagger$ cells of Table~\ref{tab:SOCS_formulation}, the solver
returns singleton partitions with near-zero objectives (303--375 versus
the expected 12{,}900--20{,}900); the first such collapse already appears
at $n = 20$, $\alpha = 0.5$, and the one remaining configuration with
$n \ge 24$ times out normally with a sub-1\% gap. Despite DIS's tighter
relaxation and smaller constraint system
(Proposition~\ref{prop:disagg_LP}), the solver generates poor incumbents on
these large OSCS instances without an initial solution---an observed search
phenomenon whose cause the present evidence does not establish---and
lazy\_MIP\_ws with DIS achieves gaps comparable to AGG (both under 1\%)
precisely because the warm start removes that burden.
\paragraph{Coalition structure patterns: details.}
The near-zero cost of stability has a structural explanation. At
$\alpha = 0.5$, the formed coalitions are predominantly pairs, and a pair
is worth forming precisely when
$V(\{i,j\}) \ge V(\{i\}) + V(\{j\})$---in which case a stable two-way
split always exists, so every OCS pair is automatically stabilizable. At
$\alpha = 1.0$, the pooled coalitions in our instances are large enough
that splitting them costs little or nothing (at $n = 10$, both the
grand-coalition OCS and the two-coalition OSCS attain $7{,}564$). This
reflects our test instances rather than CIPGs in general: with
$V(\{i\}) = 0$, each pair worth $10$, and $V(N) = 12$, the three pair
inequalities force a total payoff of at least $15 > 12$, so the grand
coalition's Core is empty and the cost of stability is nearly $17\%$.
\paragraph{Payoff refinement: details.}
The refinement enumerates all subset values $V(s)$ of each formed
coalition with the coalition-level IP (under the same
$\alpha$-restricted policy) and solves the nucleolus by the sequential-LP
scheme; formed coalitions are small, so at most $2^{|c|}-1$ small IPs are
required per coalition. Since large coalitions arise mainly under
$\alpha = 1.0$, the Shapley Core-membership contrast across pooling
regimes is sharp (100/102 at $\alpha = 0.5$ versus 18/44 at
$\alpha = 1.0$)---an empirical reflection of the non-convexity discussed in
Section~\ref{ss:V_structural}.
\bgroup
\def\arraystretch{1.05}\SingleSpacedXI
\begin{table}[tbp]
\caption{Coalition-value evaluations: proposed pipeline (lazy\_MIP\_ws, AGG)
versus the exhaustive baseline. Measured $V$-enumeration times differ by
$\alpha$---at $n = 18$, $3{,}628$ seconds at $\alpha = 0.5$ against $2{,}127$ at
$\alpha = 1.0$---and the brute-force columns report the $\alpha = 1.0$ series,
which understates the baseline's cost at $\alpha = 0.5$.}
\label{tab:bruteforce}
\centering
\scalebox{0.85}{
\begin{tabular}{rrrrrrrr}
\toprule
& & \multicolumn{2}{c}{\textbf{Evaluations (ours)}} & \multicolumn{2}{c}{\textbf{Time, ours (s)}} & \multicolumn{2}{c}{\textbf{BF time (s)}} \\
\cmidrule(lr){3-4} \cmidrule(lr){5-6} \cmidrule(lr){7-8}
$n$ & $2^n - 1$ & $\alpha = 0.5$ & $\alpha = 1.0$ & $\alpha = 0.5$ & $\alpha = 1.0$ & $V$-enum. & OSCS search \\
\midrule
2 & 3 & 3 & 3 & 0.09 & 0.01 & 0.0 & 0.0 \\
4 & 15 & 12 & 15 & 0.2 & 0.1 & 0.1 & 0.0 \\
6 & 63 & 28 & 63 & 0.6 & 0.3 & 0.3 & 0.1 \\
8 & 255 & 51 & 255 & 2.0 & 2.1 & 1.3 & 0.2 \\
10 & 1,023 & 81 & 321 & 4.0 & 3.1 & 6.4 & 1.3 \\
12 & 4,095 & 123 & 1,009 & 17 & 13 & 26.5 & 10.1 \\
14 & 16,383 & 184 & 6,151 & 72 & 172 & 112.8 & 76.5 \\
16 & 65,535 & 213 & 6,076 & 74 & 1800 & 509.5 & $\approx$580 \\
18 & 262,143 & 271 & 6,025 & TL (0.40\%) & TL (0.03\%) & 2,126.9 & $\approx$4,392 \\
20 & 1,048,575 & 357 & 14,732 & TL (0.12\%) & TL (0.02\%) & $\approx$8,878 & $\approx$33,224 \\
22 & 4,194,303 & 418 & 15,840 & TL (0.72\%) & TL (0.01\%) & $\approx$37,061 & $\approx$251,597 \\
24 & 16,777,215 & 478 & 13,567 & TL (0.64\%) & TL (0.09\%) & $\approx$154,706 & $\approx$1,905,298 \\
26 & 67,108,863 & 573 & 17,448 & TL (0.87\%) & TL (0.09\%) & $\approx$645,792 & $\approx$14,428,445 \\
28 & 268,435,455 & 666 & 11,727 & TL (0.60\%) & TL (0.08\%) & $\approx$2,695,743 & $\approx$109,263,786 \\
30 & 1,073,741,823 & 766 & 11,840 & TL (0.94\%) & TL (0.05\%) & $\approx$11,252,893 & $\approx$827,433,238 \\
\bottomrule
\end{tabular}
}
\par\vspace{4pt}
\parbox{0.92\linewidth}{\footnotesize \emph{Note.} Brute-force (BF)
times are measured up to the stage caps ($V$-enumeration to $n = 18$, OSCS
search to $n = 14$) and geometrically extrapolated beyond (marked
$\approx$); TL entries in ``Time, ours'' report the remaining MIP gap. For
timeout rows, ``Evaluations (ours)'' records search progress at termination
and need not be monotone in $n$.
Column definitions are given in the text.}
\end{table}
\egroup

\bgroup
\def\arraystretch{1.15}\SingleSpacedXI
\begin{table}[tbp]
\caption{Ablation: lifted stability cuts~\eqref{eq:lifted_cut} versus basic
cuts~\eqref{eq:stability_bigM} within lazy\_MIP (AGG, no warm start).}
\label{tab:ablation_lifted}
\centering
\scalebox{0.85}{
\begin{tabular}{rcrrrrrrrrr}
\toprule
& & \multicolumn{4}{c}{\textbf{lifted}} & \multicolumn{4}{c}{\textbf{basic}} & \\
\cmidrule(lr){3-6} \cmidrule(lr){7-10}
$n$ & $\alpha$ & Time & Gap & Cuts & CB & Time & Gap & Cuts & CB & Obj \\
\midrule
2 & 0.5 & 0.01 & --- & 1 & 8 & 0.02 & --- & 1 & 8 & 1340 \\
2 & 1.0 & 0.01 & --- & 2 & 13 & 0.02 & --- & 2 & 13 & 1365 \\
4 & 0.5 & 0.2 & --- & 7 & 17 & 0.2 & --- & 7 & 17 & 3037 \\
4 & 1.0 & 0.1 & --- & 14 & 15 & 0.1 & --- & 14 & 14 & 3105 \\
6 & 0.5 & 0.5 & --- & 35 & 18 & 0.5 & --- & 35 & 18 & 3622 \\
6 & 1.0 & 0.4 & --- & 54 & 18 & 0.3 & --- & 56 & 17 & 3696 \\
8 & 0.5 & 2.3 & --- & 158 & 20 & 7.7 & --- & 151 & 18 & 5487 \\
8 & 1.0 & 1.8 & --- & 225 & 23 & 10 & --- & 296 & 49 & 5606 \\
10 & 0.5 & 12 & --- & 137 & 26 & 10 & --- & 156 & 30 & 7483 \\
10 & 1.0 & 9.4 & --- & 257 & 45 & 10 & --- & 300 & 80 & 7564 \\
12 & 0.5 & 19 & --- & 91 & 33 & 19 & --- & 118 & 25 & 8244 \\
12 & 1.0 & 75 & --- & 1348 & 105 & 82 & --- & 1003 & 48 & 8325 \\
14 & 0.5 & 147 & --- & 616 & 46 & 109 & --- & 326 & 39 & 8855 \\
14 & 1.0 & 133 & --- & 2316 & 133 & 61 & --- & 1611 & 64 & 8953 \\
16 & 0.5 & 162 & --- & 354 & 40 & 118 & --- & 223 & 35 & 10250 \\
16 & 1.0 & TL & 0.02 & 4533 & 228 & TL & 0.04 & 2488 & 136 & 10341 \\
18 & 0.5 & TL & 0.39 & 188 & 62 & TL & 0.47 & 73 & 59 & 11425 \\
18 & 1.0 & TL & 0.03 & 4809 & 181 & TL & 0.03 & 2630 & 81 & 11559 \\
20 & 0.5 & TL & 0.28 & 115 & 95 & TL & 0.27 & 107 & 83 & 12905 \\
20 & 1.0 & TL & 0.09 & 8692 & 234 & TL & 0.03 & 6131 & 181 & 12970 \\
\bottomrule
\end{tabular}
}
\par\vspace{4pt}
\parbox{0.92\linewidth}{\footnotesize \emph{Note.} All Experiment~1
instances. Time (s), MIP gap (\%) at timeout (``---'' means solved to
optimality), number of added cuts, and callback invocations (CB). Obj is
the common incumbent (identical optima wherever both arms certify
optimality).}
\end{table}
\egroup

\bgroup
\def\arraystretch{1.15}\SingleSpacedXI
\begin{table}[tbp]
\caption{Payoff refinement summary over the $146$ formed OSCS coalitions of
size $\ge 2$.}
\label{tab:refinement_summary}
\centering
\scalebox{0.85}{
\begin{tabular}{crrrr}
\toprule
$|c|$ & \#Coalitions & Shapley $\in$ Core & Max.\ viol.\ (\% of $V(c)$) & Thin Core \\
\midrule
2 & 98 & 98 & --- & 0 \\
3 & 12 & 10 & 0.145 & 0 \\
4 & 10 & 7 & 0.057 & 0 \\
5 & 4 & 1 & 0.130 & 0 \\
6 & 5 & 2 & 0.049 & 0 \\
7 & 8 & 0 & 0.232 & 2 \\
8 & 4 & 0 & 0.201 & 2 \\
9 & 5 & 0 & 0.078 & 3 \\
\midrule
Total & 146 & 118 & 0.232 & 7 \\
\bottomrule
\end{tabular}
}
\par\vspace{4pt}
\parbox{0.92\linewidth}{\footnotesize \emph{Note.} All instances
($n = 2,\dots,30$, both $\alpha$ values). Shapley value and nucleolus of
each restricted game are computed by enumerating all subset values $V(s)$
with the coalition-level IP. ``Thin Core'' counts coalitions with
least-core $\varepsilon > -0.01$. The nucleolus is Core-feasible in all
$146$ cases.}
\end{table}
\egroup

\bgroup
\renewcommand{\arraystretch}{0.95}\SingleSpacedXI
\setlength{\tabcolsep}{4pt}
\begin{table}[tbp]
\caption{OSCS formulation effect: AGG vs.\ DIS within lazy\_MIP and lazy\_MIP\_ws.}
\label{tab:SOCS_formulation}
\centering
\footnotesize
\begin{tabular}{rcrrrrrrrrrrr}
\toprule
& & \multicolumn{5}{c}{\textbf{lazy\_MIP}} & \multicolumn{5}{c}{\textbf{lazy\_MIP\_ws}} & \\
\cmidrule(lr){3-7} \cmidrule(lr){8-12}
& & \multicolumn{2}{c}{AGG} & \multicolumn{2}{c}{DIS} & & \multicolumn{2}{c}{AGG} & \multicolumn{2}{c}{DIS} & & \\
\cmidrule(lr){3-4} \cmidrule(lr){5-6} \cmidrule(lr){8-9} \cmidrule(lr){10-11}
$n$ & $\alpha$ & Time & Gap & Time & Gap & Better & Time & Gap & Time & Gap & Better & Obj \\
\midrule
2 & 0.5 & 0.01 & --- & 0.01 & --- & DIS & 0.09 & --- & 0.01 & --- & DIS & 1340 \\
2 & 1.0 & 0.01 & --- & 0.01 & --- & DIS & 0.01 & --- & 0.01 & --- & DIS & 1365 \\
4 & 0.5 & 0.2 & --- & 0.3 & --- & AGG & 0.2 & --- & 0.2 & --- & AGG & 3037 \\
4 & 1.0 & 0.1 & --- & 0.2 & --- & AGG & 0.1 & --- & 0.1 & --- & DIS & 3105 \\
6 & 0.5 & 0.5 & --- & 0.6 & --- & AGG & 0.6 & --- & 0.5 & --- & DIS & 3622 \\
6 & 1.0 & 0.4 & --- & 0.3 & --- & DIS & 0.3 & --- & 0.2 & --- & DIS & 3696 \\
8 & 0.5 & 2.3 & --- & 6.5 & --- & AGG & 2.0 & --- & 2.9 & --- & AGG & 5487 \\
8 & 1.0 & 1.8 & --- & 3.1 & --- & AGG & 2.1 & --- & 2.2 & --- & AGG & 5606 \\
10 & 0.5 & 12 & --- & 3.0 & --- & DIS & 4.0 & --- & 2.9 & --- & DIS & 7483 \\
10 & 1.0 & 9.4 & --- & 15 & --- & AGG & 3.1 & --- & 1.8 & --- & DIS & 7564 \\
12 & 0.5 & 19 & --- & 74 & --- & AGG & 17 & --- & 5.6 & --- & DIS & 8244 \\
12 & 1.0 & 75 & --- & 217 & --- & AGG & 13 & --- & 169 & --- & AGG & 8325 \\
14 & 0.5 & 147 & --- & 99 & --- & DIS & 72 & --- & 66 & --- & DIS & 8855 \\
14 & 1.0 & 133 & --- & 42 & --- & DIS & 172 & --- & 182 & --- & AGG & 8953 \\
16 & 0.5 & 162 & --- & 259 & --- & AGG & 74 & --- & TL & 0.33 & AGG & 10250 \\
16 & 1.0 & TL & 0.02 & TL & 0.06 & AGG & 1800 & --- & TL & 0.04 & AGG & 10341 \\
18 & 0.5 & TL & 0.39 & TL & 0.66 & AGG & TL & 0.40 & TL & 0.43 & AGG & 11425 \\
18 & 1.0 & TL & 0.03 & TL & 0.05 & AGG & TL & 0.03 & TL & 0.12 & AGG & 11559 \\
20 & 0.5 & TL & 0.28 & $\dagger$ & $\dagger$ & AGG & TL & 0.12 & TL & 0.33 & AGG & 12905 \\
20 & 1.0 & TL & 0.09 & TL & 0.02 & DIS & TL & 0.02 & TL & 0.13 & AGG & 12970 \\
\midrule
22 & 0.5 & TL & 0.91 & TL & 1.0 & AGG & TL & 0.72 & TL & 0.72 & --- & 15125 \\
22 & 1.0 & TL & 0.10 & TL & 0.28 & AGG & TL & 0.01 & TL & 0.01 & --- & 15327 \\
24 & 0.5 & TL & 0.83 & $\dagger$ & $\dagger$ & AGG & TL & 0.64 & TL & 0.74 & AGG & 16278 \\
24 & 1.0 & TL & 0.05 & TL & 0.08 & AGG & TL & 0.09 & TL & 0.11 & AGG & 16437 \\
26 & 0.5 & TL & 1.1 & $\dagger$ & $\dagger$ & AGG & TL & 0.87 & TL & 0.91 & AGG & 17816 \\
26 & 1.0 & TL & 0.17 & $\dagger$ & $\dagger$ & AGG & TL & 0.09 & TL & 0.11 & AGG & 18007 \\
28 & 0.5 & TL & 0.89 & $\dagger$ & $\dagger$ & AGG & TL & 0.60 & TL & 0.86 & AGG & 18735 \\
28 & 1.0 & TL & 0.16 & $\dagger$ & $\dagger$ & AGG & TL & 0.08 & TL & 0.08 & --- & 18916 \\
30 & 0.5 & TL & 1.1 & $\dagger$ & $\dagger$ & AGG & TL & 0.94 & TL & 0.96 & AGG & 20653 \\
30 & 1.0 & TL & 0.11 & $\dagger$ & $\dagger$ & AGG & TL & 0.05 & TL & 0.05 & --- & 20854 \\
\bottomrule
\end{tabular}
\par\vspace{4pt}
\parbox{0.92\linewidth}{\footnotesize \emph{Note.} Wall-clock time (s)
and MIP gap (\%) at timeout; ``---'' means solved to optimality. Obj is the
lazy\_MIP (AGG) incumbent. ``Better'' follows the convention of
Table~\ref{tab:OCS_formulation}. $\dagger$: solver degradation (see text).}
\end{table}
\egroup

\bgroup
\def\arraystretch{1.15}\SingleSpacedXI
\begin{table}[tbp]
\caption{The cutplane and lazy configurations under the disaggregated formulation
(DIS), $n \le 20$.}
\label{tab:enum_lazy_DL}
\centering
\scalebox{0.9}{
\begin{tabular}{rcrrrrrr}
\toprule
& & \multicolumn{3}{c}{\textbf{cutplane (DIS)}} & \multicolumn{3}{c}{\textbf{lazy (DIS)}} \\
\cmidrule(lr){3-5} \cmidrule(lr){6-8}
$n$ & $\alpha$ & Time & Gap & Cuts & Time & Gap & Cuts \\
\midrule
2 & 0.5 & 0.01 & --- & 0 & 0.01 & --- & 1 \\
2 & 1.0 & 0.01 & --- & 1 & 0.02 & --- & 2 \\
4 & 0.5 & 0.3 & --- & 1 & 0.3 & --- & 11 \\
4 & 1.0 & 0.3 & --- & 14 & 0.2 & --- & 21 \\
6 & 0.5 & 0.8 & --- & 2 & 0.5 & --- & 33 \\
6 & 1.0 & 0.6 & --- & 50 & 0.3 & --- & 71 \\
8 & 0.5 & 3.1 & --- & 4 & 6.4 & --- & 154 \\
8 & 1.0 & 3.7 & --- & 203 & 3.1 & --- & 271 \\
10 & 0.5 & 8.5 & --- & 4 & 3.1 & --- & 147 \\
10 & 1.0 & 12 & --- & 850 & 22 & --- & 446 \\
12 & 0.5 & 17 & --- & 6 & 73 & --- & 298 \\
12 & 1.0 & 408 & --- & 2586 & 218 & --- & 987 \\
14 & 0.5 & 231 & --- & 23 & 101 & --- & 147 \\
14 & 1.0 & 117 & --- & 12602 & 42 & --- & 721 \\
16 & 0.5 & 502 & --- & 15 & 265 & --- & 1029 \\
16 & 1.0 & TL & n/a & 0 & TL & 0.04 & 2962 \\
18 & 0.5 & TL & n/a & 0 & TL & 0.66 & 566 \\
18 & 1.0 & TL & n/a & 0 & TL & 0.03 & 5952 \\
20 & 0.5 & TL & n/a & 0 & TL & 0.63 & 2150 \\
20 & 1.0 & TL & n/a & 0 & TL & 0.13 & 6302 \\
\bottomrule
\end{tabular}
}
\par\vspace{4pt}
\parbox{0.92\linewidth}{\footnotesize \emph{Note.} Completes the
configurations announced in Section~\ref{s:comp_results}; the AGG
counterparts appear in Table~\ref{tab:SOCS_methods}. Wall-clock time (s),
MIP gap (\%) at timeout (``---'' means solved to optimality; ``n/a'' means
no single MIP bound is available), and number of stability cuts.}
\end{table}
\egroup

\bgroup
\def\arraystretch{1.15}\SingleSpacedXI
\begin{table}[tbp]
\caption{Diagnostics of the SCS-feasible heuristic (lazy\_MIP\_ws, AGG).}
\label{tab:heur_diag}
\centering
\scalebox{0.9}{
\begin{tabular}{rcrrrrrrr}
\toprule
$n$ & $\alpha$ & Phase 1 (s) & Phase 2 (s) & Core checks & Pool $|\mathcal{P}|$ & Heur.\ Obj & $|CS|$ & avg.\ $|c|$ \\
\midrule
2 & 0.5 & 0.05 & 0.00 & 0 & 2 & 1340 & 2 & 1.0 \\
2 & 1.0 & 0.00 & 0.00 & 1 & 3 & 1365 & 1 & 2.0 \\
4 & 0.5 & 0.02 & 0.00 & 2 & 6 & 3037 & 3 & 1.3 \\
4 & 1.0 & 0.03 & 0.00 & 9 & 13 & 3105 & 1 & 4.0 \\
6 & 0.5 & 0.04 & 0.00 & 9 & 15 & 3622 & 4 & 1.5 \\
6 & 1.0 & 0.13 & 0.00 & 24 & 30 & 3696 & 1 & 6.0 \\
8 & 0.5 & 0.08 & 0.00 & 11 & 19 & 5487 & 4 & 2.0 \\
8 & 1.0 & 0.63 & 0.00 & 50 & 52 & 5600 & 2 & 4.0 \\
10 & 0.5 & 0.18 & 0.00 & 15 & 25 & 7483 & 6 & 1.7 \\
10 & 1.0 & 0.65 & 0.00 & 74 & 81 & 7564 & 2 & 5.0 \\
12 & 0.5 & 0.17 & 0.00 & 35 & 47 & 8244 & 6 & 2.0 \\
12 & 1.0 & 2.13 & 0.00 & 112 & 118 & 8325 & 2 & 6.0 \\
14 & 0.5 & 0.23 & 0.01 & 41 & 55 & 8848 & 8 & 1.8 \\
14 & 1.0 & 3.61 & 0.01 & 147 & 150 & 8940 & 4 & 3.5 \\
16 & 0.5 & 0.50 & 0.01 & 59 & 75 & 10250 & 9 & 1.8 \\
16 & 1.0 & 24.1 & 0.01 & 199 & 198 & 10340 & 2 & 8.0 \\
18 & 0.5 & 0.68 & 0.01 & 75 & 92 & 11420 & 10 & 1.8 \\
18 & 1.0 & 28.4 & 0.01 & 285 & 267 & 11549 & 3 & 6.0 \\
20 & 0.5 & 1.38 & 0.01 & 115 & 129 & 12896 & 11 & 1.8 \\
20 & 1.0 & 17.3 & 0.01 & 329 & 304 & 12963 & 5 & 4.0 \\
\midrule
22 & 0.5 & 1.82 & 0.01 & 125 & 144 & 15151 & 11 & 2.0 \\
22 & 1.0 & 98.5 & 0.01 & 469 & 413 & 15339 & 3 & 7.3 \\
24 & 0.5 & 2.23 & 0.02 & 98 & 121 & 16304 & 13 & 1.8 \\
24 & 1.0 & 10.3 & 0.02 & 464 & 434 & 16428 & 5 & 4.8 \\
26 & 0.5 & 2.94 & 0.02 & 207 & 217 & 17862 & 13 & 2.0 \\
26 & 1.0 & 106 & 0.06 & 725 & 564 & 18014 & 5 & 5.2 \\
28 & 0.5 & 3.57 & 0.01 & 160 & 185 & 18776 & 14 & 2.0 \\
28 & 1.0 & 65.5 & 0.02 & 725 & 614 & 18930 & 6 & 4.7 \\
30 & 0.5 & 4.46 & 0.02 & 210 & 237 & 20676 & 17 & 1.8 \\
30 & 1.0 & 60.1 & 0.02 & 774 & 708 & 20866 & 6 & 5.0 \\
\bottomrule
\end{tabular}
}
\par\vspace{4pt}
\parbox{0.92\linewidth}{\footnotesize \emph{Note.} Phase 1 is the
Core-verified bottom-up merging (Section~\ref{sec:heuristics}) and Phase 2
the set-partitioning IP over the collected pool $\mathcal{P}$ of
Core-feasible coalitions; ``Core checks'' counts Core-nonemptiness LPs
solved in Phase 1. Heur.\ Obj is the objective of the returned SCS-feasible
partition ($|CS|$ coalitions of average size avg.\ $|c|$), which seeds
lazy\_MIP\_ws. Phase 2 is negligible throughout because the pool sizes
remain in the hundreds; total heuristic time is dominated by the Core
checks on large pooled coalitions at $\alpha = 1.0$.}
\end{table}
\egroup

\clearpage
\section{Coalition Structure and Payoff Details}\label{app:details}

\subsection{Per-coalition solution concepts: details}
\label{app:refinement_details}

For each coalition \(c \in CS(N)\), the restricted game \((c,V_c)\) admits
the usual cooperative solution concepts:
(i) the Shapley value is well-defined for \((c,V_c)\);
(ii) the nucleolus of \((c,V_c)\) exists and lies in its Core whenever the
Core is nonempty, as is the case for every formed coalition of an SCS; and
(iii) if \((c,V_c)\) is convex (supermodular), then the Shapley value of
\((c,V_c)\) also belongs to its Core.
Non-uniqueness of the OSCS payoffs is harmless: replacing the returned
allocation \(v^{(c)}\) by any other Core allocation \(w^{(c)}\) of
\((c,V_c)\) preserves efficiency \(\sum_{i\in c} w^{(c)}_i = V(c)\) and all
internal Core inequalities \(\sum_{i\in s} w^{(c)}_i \ge V(s)\), so the SCS
constraints remain satisfied. The least-core value is
\[
\varepsilon(c) \;:=\; \min_{\varepsilon,\,\psi} \Bigl\{ \varepsilon \;:\;
\sum_{i\in c} \psi_i = V(c),\;\;
\sum_{i\in s} \psi_i \;\ge\; V(s) - \varepsilon
\ \ \forall\,\emptyset \neq s \subsetneq c \Bigr\},
\]
where the quantification is over proper nonempty subsets; the Core of
\((c,V_c)\) is nonempty if and only if \(\varepsilon(c) \le 0\), and
allocations attaining \(\varepsilon(c) < 0\) lie in the relative interior
of the Core.

Table~\ref{tab:coalition_structures} reports the optimal coalition
structures found by OCS and OSCS for all small instances ($n \le 12$),
illustrating the structural patterns discussed in
Section~\ref{s:numerical}.
Table~\ref{tab:refinement_detail} reports the corresponding per-coalition
payoff refinement: the Shapley value and the nucleolus of each formed OSCS
coalition of size at least two.
\bgroup
\def\arraystretch{1.25}\SingleSpacedXI
\begin{table}[tbp]
\caption{Coalition structures for small instances ($n \le 12$).}
\label{tab:coalition_structures}
\centering
\scalebox{0.95}{
\begin{tabular}{rccclccl}
\toprule
$n$ & $\alpha$ & \multicolumn{3}{c}{\textbf{OCS (AGG)}} & \multicolumn{3}{c}{\textbf{OSCS (lazy\_MIP\_ws, AGG)}} \\
\cmidrule(lr){3-5} \cmidrule(lr){6-8}
& & Obj & $|CS|$ & Coalition Structure & Obj & $|CS|$ & Coalition Structure \\
\midrule
2 & 0.5 & 1340 & 2 & $\scriptstyle \{0\}, \{1\}$ & 1340 & 2 & $\scriptstyle \{0\}, \{1\}$ \\
2 & 1.0 & 1365 & 1 & $\scriptstyle \{0,1\}$ & 1365 & 1 & $\scriptstyle \{0,1\}$ \\
\addlinespace[2pt]
4 & 0.5 & 3037 & 3 & $\scriptstyle \{0,3\}, \{1\}, \{2\}$ & 3037 & 3 & $\scriptstyle \{0,3\}, \{1\}, \{2\}$ \\
4 & 1.0 & 3105 & 1 & $\scriptstyle \{0,1,2,3\}$ & 3105 & 1 & $\scriptstyle \{0,1,2,3\}$ \\
\addlinespace[2pt]
6 & 0.5 & 3622 & 4 & $\scriptstyle \{0,1\}, \{2\}, \{3\}, \{4,5\}$ & 3622 & 4 & $\scriptstyle \{0,1\}, \{2\}, \{3\}, \{4,5\}$ \\
6 & 1.0 & 3696 & 1 & $\scriptstyle \{0,1,2,3,4,5\}$ & 3696 & 1 & $\scriptstyle \{0,1,2,3,4,5\}$ \\
\addlinespace[2pt]
8 & 0.5 & 5487 & 4 & $\scriptstyle \{0,5\}, \{1,3\}, \{2,6\}, \{4,7\}$ & 5487 & 4 & $\scriptstyle \{0,5\}, \{1,3\}, \{2,6\}, \{4,7\}$ \\
8 & 1.0 & 5609 & 1 & $\scriptstyle \{0,1,2,3,4,5,6,7\}$ & 5606$^*$ & 2 & $\scriptstyle \{0,2,4,7\}, \{1,3,5,6\}$ \\
\addlinespace[2pt]
10 & 0.5 & 7483 & 6 & $\scriptstyle \{0\}, \{1,3\}, \{2,9\}, \{4,7\}, \{5,6\}, \{8\}$ & 7483 & 6 & $\scriptstyle \{0\}, \{1,3\}, \{2,9\}, \{4,7\}, \{5,6\}, \{8\}$ \\
10 & 1.0 & 7564 & 1 & $\scriptstyle \{0,1,2,3,4,5,6,7,8,9\}$ & 7564 & 2 & $\scriptstyle \{0,2,4,5,6,7,8,9\}, \{1,3\}$ \\
\addlinespace[2pt]
12 & 0.5 & 8244 & 6 & $\scriptstyle \{0,7\}, \{1,6\}, \{2,9\}, \{3,5\}, \{4,8\}, \{10,11\}$ & 8244 & 6 & $\scriptstyle \{0,7\}, \{1,6\}, \{2,9\}, \{3,5\}, \{4,8\}, \{10,11\}$ \\
12 & 1.0 & 8325 & 1 & $\scriptstyle \{0,1,2,3,4,5,6,7,8,9,10,11\}$ & 8325 & 2 & $\scriptstyle \{0,1,3,4,5,6,8,10,11\}, \{2,7,9\}$ \\
\bottomrule
\multicolumn{8}{l}{\footnotesize $^*$Stability constraint reduces objective value.} \\
\end{tabular}
}
\par\vspace{4pt}
\parbox{0.92\linewidth}{\footnotesize \emph{Note.} OCS: optimal coalition
structure (AGG, no stability). OSCS: optimal stable coalition structure
(lazy\_MIP\_ws, AGG).}
\end{table}
\egroup

\bgroup
\def\arraystretch{1.2}\SingleSpacedXI
\begin{table}[tbp]
\caption{Per-coalition payoff refinement for small instances ($n \le 12$).}
\label{tab:refinement_detail}
\centering
\scalebox{0.9}{
\begin{tabular}{ccrlllcr}
\toprule
$n$ & $\alpha$ & $V(c)$ & Coalition & Shapley $\phi$ & Nucleolus & $\phi \in$ Core & $\varepsilon$ \\
\midrule
2 & 1.0 & 1365 & $\scriptstyle\{0,1\}$ & $\scriptstyle(582,784)$ & $\scriptstyle(582,784)$ & \checkmark & -12.50 \\
\addlinespace[2pt]
4 & 0.5 & 1596 & $\scriptstyle\{0,3\}$ & $\scriptstyle(704,892)$ & $\scriptstyle(704,892)$ & \checkmark & -17.00 \\
4 & 1.0 & 3105 & $\scriptstyle\{0,1,2,3\}$ & $\scriptstyle(709,745,740,911)$ & $\scriptstyle(709,742,739,916)$ & \checkmark & -5.33 \\
\addlinespace[2pt]
6 & 0.5 & 1364 & $\scriptstyle\{0,1\}$ & $\scriptstyle(798,566)$ & $\scriptstyle(798,566)$ & \checkmark & -2.00 \\
6 & 0.5 & 1077 & $\scriptstyle\{4,5\}$ & $\scriptstyle(692,385)$ & $\scriptstyle(692,385)$ & \checkmark & -10.00 \\
6 & 1.0 & 3696 & $\scriptstyle\{0,1,2,3,4,5\}$ & $\scriptstyle(809,585,627,594,699,383)$ & $\scriptstyle(806,588,626,598,699,380)$ & $\times$ & -0.67 \\
\addlinespace[2pt]
8 & 0.5 & 1507 & $\scriptstyle\{0,5\}$ & $\scriptstyle(743,764)$ & $\scriptstyle(743,764)$ & \checkmark & -6.00 \\
8 & 0.5 & 1333 & $\scriptstyle\{1,3\}$ & $\scriptstyle(614,719)$ & $\scriptstyle(614,719)$ & \checkmark & -2.00 \\
8 & 0.5 & 1198 & $\scriptstyle\{2,6\}$ & $\scriptstyle(567,631)$ & $\scriptstyle(567,631)$ & \checkmark & -11.00 \\
8 & 0.5 & 1449 & $\scriptstyle\{4,7\}$ & $\scriptstyle(690,759)$ & $\scriptstyle(690,759)$ & \checkmark & -18.00 \\
8 & 1.0 & 2790 & $\scriptstyle\{0,2,4,7\}$ & $\scriptstyle(761,566,693,771)$ & $\scriptstyle(763,560,691,776)$ & $\times$ & -4.00 \\
8 & 1.0 & 2816 & $\scriptstyle\{1,3,5,6\}$ & $\scriptstyle(639,751,786,640)$ & $\scriptstyle(637,756,789,633)$ & \checkmark & -9.20 \\
\addlinespace[2pt]
10 & 0.5 & 1578 & $\scriptstyle\{1,3\}$ & $\scriptstyle(846,732)$ & $\scriptstyle(846,732)$ & \checkmark & -6.50 \\
10 & 0.5 & 1355 & $\scriptstyle\{2,9\}$ & $\scriptstyle(602,752)$ & $\scriptstyle(602,752)$ & \checkmark & -4.50 \\
10 & 0.5 & 1450 & $\scriptstyle\{4,7\}$ & $\scriptstyle(696,754)$ & $\scriptstyle(696,754)$ & \checkmark & -19.50 \\
10 & 0.5 & 1487 & $\scriptstyle\{5,6\}$ & $\scriptstyle(817,670)$ & $\scriptstyle(817,670)$ & \checkmark & -20.00 \\
10 & 1.0 & 1602 & $\scriptstyle\{1,3\}$ & $\scriptstyle(858,744)$ & $\scriptstyle(858,744)$ & \checkmark & -18.50 \\
10 & 1.0 & 5962 & $\scriptstyle\{0,2,4,5,6,7,8,9\}$ & $\scriptstyle(892,606,691,819,679,764,745,766)$ & $\scriptstyle(892,606,691,820,680,766,743,763)$ & $\times$ & 0.00 \\
\addlinespace[2pt]
12 & 0.5 & 1312 & $\scriptstyle\{0,7\}$ & $\scriptstyle(636,676)$ & $\scriptstyle(636,676)$ & \checkmark & -5.50 \\
12 & 0.5 & 1331 & $\scriptstyle\{1,6\}$ & $\scriptstyle(796,535)$ & $\scriptstyle(796,535)$ & \checkmark & -18.00 \\
12 & 0.5 & 1370 & $\scriptstyle\{2,9\}$ & $\scriptstyle(659,711)$ & $\scriptstyle(659,711)$ & \checkmark & -16.00 \\
12 & 0.5 & 1284 & $\scriptstyle\{3,5\}$ & $\scriptstyle(672,612)$ & $\scriptstyle(672,612)$ & \checkmark & -11.00 \\
12 & 0.5 & 1447 & $\scriptstyle\{4,8\}$ & $\scriptstyle(662,784)$ & $\scriptstyle(662,784)$ & \checkmark & -25.50 \\
12 & 0.5 & 1500 & $\scriptstyle\{10,11\}$ & $\scriptstyle(706,794)$ & $\scriptstyle(706,794)$ & \checkmark & -4.50 \\
12 & 1.0 & 2052 & $\scriptstyle\{2,7,9\}$ & $\scriptstyle(663,680,708)$ & $\scriptstyle(668,676,707)$ & \checkmark & -5.67 \\
12 & 1.0 & 6273 & $\scriptstyle\{0,1,3,4,5,6,8,10,11\}$ & $\scriptstyle(644,809,673,653,624,547,805,711,806)$ & $\scriptstyle(640,811,672,653,624,550,807,710,806)$ & $\times$ & -0.67 \\
\bottomrule
\end{tabular}
}
\par\vspace{4pt}
\parbox{0.92\linewidth}{\footnotesize \emph{Note.} OSCS coalitions of size
$\ge 2$. Payoff vectors are ordered by player index within each coalition
and rounded componentwise to integers, so a displayed vector may not sum
exactly to $V(c)$; efficiency, Core membership, and least-core values are
computed from the unrounded allocations. $\varepsilon$ is the least-core value
($\varepsilon \le 0$ iff the Core is nonempty); $\phi \in$ Core indicates
whether the Shapley value satisfies all Core inequalities.}
\end{table}
\egroup






  



\end{document}